\newtheorem{Satz}{Satz}[section]
\newtheorem{Lem}[Satz]{Lemma}
\newtheorem{Ex}[Satz]{Example}
\newtheorem{Def}[Satz]{Definition}
\newtheorem{Cor}[Satz]{Corollary}
\newtheorem{Pro}[Satz]{Proposition}
\newtheorem{Thm}[Satz]{Theorem}
\newtheorem{Rem}[Satz]{Remark}
\newcommand{\R}{\mathbb{R}}
\newcommand{\inv}{^{-1}}
\newcommand{\Leb}{\mathcal{L}}
\newcommand{\Ha}{\frac{1}{2}}
\newcommand{\K}{{S^1}}
\newcommand{\Dc}{\bar{D}^2}
\newcommand{\A}{\mathcal{A}}
\newcommand{\F}{\operatorname{Fill}}
\newcommand{\tn}[1]{\textnormal{#1}}
\newcommand{\tb}[1]{\textbf{#1}}
\newcommand{\tr}{\textrm{d}}
\newcommand{\Ja}{\tn{\tb{J}}}
\newcommand{\e}{\mathrm{e}}
\newcommand{\Rom}[1]{\expandafter\@slowromancap\romannumeral #1@}
\title[Quadratic isoperimetric constants of normed spaces]{Rigidity of the Pu inequality and quadratic isoperimetric constants of normed spaces}
\author{Paul Creutz}
\address{Paul Creutz, Mathematisches Institut der Universit\"at zu K\"oln, Weyertal 86-90, 50931 K\"oln, Germany}
\email{pcreutz@math.uni-koeln.de}
\thanks{The author was partially supported by the DFG grant SPP 2026.}
\begin{document}
\begin{abstract}
Our main result gives an improved bound on the filling areas of closed curves in Banach spaces which are not closed geodesics. As applications we show rigidity of Pu's classical systolic inequality and investigate the isoperimetric constants of normed spaces. The latter has further applications concerning the regularity of minimal surfaces in Finsler manifolds.
\end{abstract}
\maketitle
\section{Introduction}
\subsection{Rigidity of the Pu inequality}
Let $d:\K\times \K\rightarrow \R$ be a metric on the circle. Then the \emph{filling area} of $d$ is defined as
\begin{equation}
\label{eq1}
\tn{Fill}(d):=\inf_{g} \left\{ \tn{Area}(g)\right\}
\end{equation}
where $g$ ranges over all Riemannian metrics $g$ on the disc $D^2$ such that the boundary distance function $\tn{bd}_g:\K\times \K \rightarrow \R$ satisfies $\tn{bd}_g\geq d$. We call a Riemannian metric~$g$ a \emph{minimal filling} if its area equals the filling area of its boundary distance function. Ivanov proved in~\cite{Iva01} that \emph{simple} metrics are minimal fillings. By~\cite{PU05} simple metrics satisfy a strong rigidity property. The round metric on the hemisphere~$H^2$ is only simple in the interior. However still the Pu inequality,~\cite{Pu52}, tells us that it is a minimal filling. In particular
\begin{equation}
\tn{Fill}(d_{\K})=2\pi
\end{equation}
where $d_{\K}$ denotes the angular metric on $\K$. One of our main results is rigidity of the Pu inequality in the following sense.
\begin{Thm}
\label{thm1.1}
Let $d$ be a metric on $\K$ such that $d\leq d_\K$ and $d\neq d_\K$. Then \[\tn{Fill}(d)<2\pi.\]
\end{Thm}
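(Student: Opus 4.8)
The plan is to reduce Theorem~\ref{thm1.1} to the paper's main estimate on filling areas of closed curves in Banach spaces (announced in the abstract). First note that, for fixed $x$, the triangle inequality gives $|d(x,y)-d(x,z)|\le d(y,z)\le d_\K(y,z)$, so $d$ is $1$-Lipschitz with respect to $d_\K$, hence continuous, and the total length $L:=\ell(\K,d)=\sup\sum_i d(x_i,x_{i+1})$ satisfies $L\le\ell(\K,d_\K)=2\pi$. I would then treat the cases $L<2\pi$ and $L=2\pi$ separately.

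If $L<2\pi$, I argue directly from Pu's inequality. Reparametrising $\K$ by $d$-arclength identifies it with a circle on which any two points $x,y$ are joined by two complementary arcs of combined length $L$; since the $d$-length of a path never falls below the distance of its endpoints, $d(x,y)$ is at most the length of the shorter of these two arcs. Hence $d\le d_L$, where $d_L$ denotes the intrinsic metric of the circle of circumference $L$. Filling area is monotone in the boundary metric (a pointwise smaller metric admits every filling of the larger one), so $\tn{Fill}(d)\le\tn{Fill}(d_L)=\tfrac{L^2}{2\pi}<2\pi$, the middle equality being Pu's inequality rescaled by the factor $L/2\pi$ (both filling area and admissibility of a metric scale quadratically).

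So suppose $L=2\pi$; here the hypothesis $d\neq d_\K$ must be used. First, $d$ cannot be intrinsic: an intrinsic metric on $\K$ is the round metric of its circumference, and if such a metric is dominated by $d_\K$ and has circumference $2\pi$, then the identity $(\K,d_\K)\to(\K,d)$ is a length-preserving bijection, whence the two length structures coincide and $d=d_\K$ — a contradiction. Now embed $(\K,d)$ isometrically into $\ell^\infty$ by the Kuratowski map, obtaining a closed curve $\gamma$ of length $2\pi$. Since $d$ is not intrinsic, $\gamma$ is not a closed geodesic, i.e.\ not an isometrically embedded round circle: were it one, its induced metric $d$ would be round of circumference $2\pi$ and hence equal $d_\K$. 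The main theorem therefore applies and yields $\tn{FillArea}(\gamma)<\tfrac{1}{2\pi}\ell(\gamma)^2=2\pi$. It remains to note $\tn{Fill}(d)\le\tn{FillArea}(\gamma)$: an area-minimising disc filling $\gamma$ in $\ell^\infty$ induces a metric disc whose boundary distances are at least the ambient ones, which equal $d$, and approximating this metric disc by Riemannian metrics on $\Do$ with boundary distances $\ge d$ and area tending to $\tn{FillArea}(\gamma)$ — a standard fact going back to Gromov, used in the passage between metric and Riemannian filling areas — gives the desired inequality. Combining, $\tn{Fill}(d)<2\pi$.

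The substance of the argument lies entirely in the Banach-space estimate invoked in the third paragraph; relative to it, Theorem~\ref{thm1.1} is essentially its specialisation to metrics dominated by $d_\K$, the only extra ingredients being the elementary rigidity of intrinsic metrics below $d_\K$ and the reduction of $\tn{Fill}(d)$ to the filling area of the Kuratowski curve of $(\K,d)$. The one technical point deserving care is this last reduction, which rests on approximating metric-minimal discs by Riemannian ones with controlled area and boundary distances; since only the inequality $\tn{Fill}(d)\le\tn{FillArea}(\gamma)$ is needed here, this approximation is the routine part.
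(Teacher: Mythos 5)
Your reduction --- Kuratowski-embed $(\K,d)$ into $\ell^\infty$ and apply Theorem~\ref{thm1.2} to the resulting curve, which is $1$-Lipschitz from $(\K,d_\K)$ and fails to be an isometric embedding precisely because $d\neq d_\K$ --- is exactly the paper's route (and it makes your case distinction and the discussion of intrinsic metrics unnecessary: the argument of your third paragraph works verbatim for any $L\le 2\pi$). The genuine gap is the step you declare routine, namely $\tn{Fill}(d)\le \tn{Fill}^{\mathcal{A}}(\gamma)$. With ``FillArea'' read as the parametrized Hausdorff (Busemann) or Holmes--Thompson area, this inequality is simply false: take $d$ to be the boundary distance function of the unit square in $\R^2_\infty$. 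The flat square itself is a Lipschitz filling of its Kuratowski curve, of $\mathcal{A}^{b}$-area $\pi/4$ and $\mathcal{A}^{ht}$-area $2/\pi$, whereas Besicovitch's inequality forces every Riemannian metric $g$ on $D^2$ with $\tn{bd}_g\ge d$ to have area at least $1$, so $\tn{Fill}(d)\ge 1 > \tn{Fill}^{\mathcal{A}^{b}}(\gamma)$. The reason is structural: to turn a Lipschitz disc in $\ell^\infty$ into an admissible Riemannian competitor without decreasing boundary distances you must replace each (generally non-Euclidean, possibly degenerate) metric differential by an inner product whose unit ball is contained in that of the differential, i.e.\ pass to an inscribed ellipse; this preserves area only for $\mathcal{A}^{ir}$ and strictly increases Busemann and Holmes--Thompson area wherever the differential is non-Euclidean.

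Even after fixing $\mathcal{A}=\mathcal{A}^{ir}$, the inequality $\tn{Fill}(d)\le\tn{Fill}^{\mathcal{A}^{ir}}(\gamma)$ is not a soft approximation ``going back to Gromov'': Gromov's standard argument gives the easy opposite comparison, by pushing a Riemannian filling into $\ell^\infty$ via its own Kuratowski embedding. The direction you need --- producing, from a merely Lipschitz disc in $\ell^\infty$ with measurable, possibly degenerate metric differentials, a genuine Riemannian metric on $D^2$ with $\tn{bd}_g\ge d$ and area at most the inscribed Riemannian area plus $\varepsilon$ --- is precisely Theorem~$0.4$ of Ivanov (the reference [Iva09]), and it is exactly what the paper invokes to make Theorem~\ref{thm1.1} a special case of Theorem~\ref{thm1.2} with $X=\ell^\infty$ and $\mathcal{A}=\mathcal{A}^{ir}$. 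So your argument coincides with the paper's once this bridging step is either cited as Ivanov's theorem or proved; as written, the assertion that it is ``the routine part'' (and the failure to specify the area functional for which it can hold) is where the proof breaks down.
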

Theorem~\ref{thm1.1} might seem non surprising at first glance. However it contrasts the non-rigidity of the Besicovitch inequality,~\cite{Bes52}, which has been noted in~\cite{BI16}. In general it is a seemingly hard question to describe the metrics that arise as boundary distance functions of Riemannian metrics. Some understanding in the more flexible Finsler setting has been provided in~\cite{BI16}.\par 
If one moves to higher dimensions or does not restrict to disc type surfaces in~\eqref{eq1} then simple metrics and the round metric on~$H^n$ are still conjectured to be minimal fillings. This is subject to famous conjectures of Burago-Ivanov and Gromov,~\cite{BI10,Gro83}, which are still widely open. See~\cite{BCIK05,BI10,Iva13,BCG95,BI13} for some progress.\par
More generally we study filling areas of curves in Banach spaces. To this end we fix a Banach space $X$ and an area functional in the sense of convex geometry~$\mathcal{A}$, see Section~\ref{subsec22} below for the definition. For the moment the reader may think of the parametrized Hausdorff measure~$\A^b$, also called \emph{Busemann area functional}. It is given for a Lipschitz disc $f:D^2\rightarrow X$ by
\begin{equation}
\label{eq2}
\mathcal{A}^b(f):=\int_X \tn{card}\left( f\inv (y) \right)\ \tr \mathcal{H}^2_X(y).
\end{equation}
The \emph{filling area} $\tn{Fill}^\mathcal{A}(\gamma)$ of a closed curve $\gamma$ in~$X$ is defined as the infimum of~$\mathcal{A}(f)$ where~$f$ ranges over all Lipschitz discs spanning~$\gamma$. The main result of~\cite{Cre20} states: \emph{if $\gamma:(S^1,d_{S^1})\rightarrow X$ is $1$-Lipschitz then~$\gamma$ extends to a $1$-Lipschitz map~$m:H^2\rightarrow X$.}
We refine this result as follows.
\begin{Thm}
\label{thm1.2}
If $\gamma$ is $1$-Lipschitz but not an isometric  embedding then $m$ is area decreasing. In particular $\tn{Fill}^\mathcal{A}(\gamma)<2\pi$.
\end{Thm}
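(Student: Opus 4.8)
The plan is to reduce both assertions to the single estimate $\mathcal{A}(m)<2\pi$: one has $\tn{Fill}^{\mathcal{A}}(\gamma)\le\mathcal{A}(m)$ by definition, and ``$m$ is area decreasing'' will fall out of the argument. So I assume that $\gamma$ is $1$-Lipschitz but not an isometric embedding of $(S^1,d_{S^1})$ and aim for $\mathcal{A}(m)<2\pi$, arguing by contradiction from $\mathcal{A}(m)=2\pi$.

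Since $m\colon H^2\to X$ is $1$-Lipschitz and area functionals are monotone under $1$-Lipschitz maps (Section~\ref{subsec22}), the metric differential $\mathrm{md}_x m$ satisfies $\mathbf{J}^{\mathcal{A}}(\mathrm{md}_x m)\le 1$ for a.e.\ $x\in H^2$, where $H^2$ is given the round metric of total area $2\pi$. Hence $\mathcal{A}(m)\le 2\pi$ unconditionally, and $\mathcal{A}(m)=2\pi$ forces $\mathbf{J}^{\mathcal{A}}(\mathrm{md}_x m)=1$ for a.e.\ $x$. For the Busemann functional this means $\mathrm{md}_x m$ equals the round Riemannian norm for a.e.\ $x$ — i.e.\ $m$ is infinitesimally isometric, hence preserves the length of a.e.\ curve in $H^2$; for a general area functional one only gets that $m$ is infinitesimally ``area maximal'' a.e., since a contracting direction of $\mathrm{md}_x m$ need not lower $\mathbf{J}^{\mathcal{A}}$. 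In any case the task is to upgrade this a.e.\ interior rigidity of $m$ into metric rigidity of $\gamma$ on $\partial H^2$.

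This step cannot be carried out formally: the standard isometric immersion $H^2\hookrightarrow\R^3$ is a $1$-Lipschitz extension of its equatorial circle — which is not an isometric embedding of $(S^1,d_{S^1})$ — yet has Busemann area $2\pi$. One must therefore enter the construction of $m$ from \cite{Cre20}, whose very point is that $m$ is strictly smaller than the hemisphere whenever $\gamma$ leaves room, and track the defect quantitatively. Pick $p,q\in S^1$ and $\eta>0$ with $|\gamma(p)-\gamma(q)|_X\le d_{S^1}(p,q)-\eta$; by continuity the same holds, with $\eta/2$, for all $(p',q')$ near $(p,q)$. The geometric heart of the matter is to show that the resulting shortcuts in $X$ force $m$ to contract not merely along the relevant geodesics of $H^2$ but throughout a two-dimensional region $E\subseteq H^2$ of positive measure, there even by a factor $\lambda<1$ in every direction; then $\mathbf{J}^{\mathcal{A}}(\lambda L)=\lambda^2\mathbf{J}^{\mathcal{A}}(L)$ gives $\mathbf{J}^{\mathcal{A}}(\mathrm{md}_x m)\le\lambda^2<1$ on $E$ for \emph{every} area functional, hence $\mathcal{A}(m)<2\pi$. (For the ``in particular'' clause alone there is a softer route: $d_\gamma(s,t):=|\gamma(s)-\gamma(t)|_X$ is a metric on $S^1$ with $d_\gamma\le d_{S^1}$ and $d_\gamma\ne d_{S^1}$, so Theorem~\ref{thm1.1} yields $\tn{Fill}(d_\gamma)<2\pi$, and it remains to turn a Riemannian filling of $d_\gamma$ into a Lipschitz disc in $X$ of no larger $\mathcal{A}$-area by a $1$-Lipschitz extension argument in the spirit of \cite{Cre20}.)

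The obstacle I expect to dominate the work is precisely this propagation of a one-dimensional, measure-zero defect of $\gamma$ on $\partial H^2$ to a two-dimensional, positive-measure defect of $m$ inside $H^2$, carried out uniformly in $\mathcal{A}$: for the Busemann functional a single contracting direction already suffices and the analysis is comparatively soft, but for a general area functional one must exploit how the extension of \cite{Cre20} spreads the boundary data over $H^2$ and, in particular, how this spreading degenerates exactly where $\gamma$ fails to be taut — rather than any abstract property of $\mathcal{A}$ or of $1$-Lipschitz extensions as such.
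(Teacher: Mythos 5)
Your framing is sound (reduce everything to $\mathcal{A}(m)<2\pi$, and note that $\mathcal{A}(m)\le 2\pi$ is automatic from $1$-Lipschitzness), but the entire substance of the proof is exactly the step you defer: propagating the boundary defect of $\gamma$ to a positive-measure interior defect of the Jacobian of $m$. The paper does this in two moves that your proposal does not contain. First, Proposition~\ref{pro3.2}: if $p$ is a point of metric differentiability and $\tn{md}_p m$ has norm $1$ in the direction of a grand arc through $p$ with endpoints $\phi,\tau(\phi)\in\K$, then $||\gamma(\phi)-\gamma(\tau(\phi))||=\pi$. This is proved not by abstract rigidity but by entering the explicit construction of $m$ from~\cite{Cre20} as $m=b\circ\gamma_*\circ\mu$, differentiating the one-parameter family of optimal transport maps $T(s,\cdot)$ between the measures $\mu_{\eta(s)}$ (Lemmas~\ref{lem3.4} and~\ref{lem3.7}, resting on the Cabrelli--Molter description of transport on $\K$), and testing with norming functionals $\Lambda_n\in X^*$ to conclude $(\Lambda_n\circ\gamma)'\to 1$ a.e.\ on $[\phi,\tau(\phi)]$. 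Second, the contrapositive of Proposition~\ref{pro3.2} shows that for $p$ near a boundary interval $I$ on which $||\gamma(\phi)-\gamma(\tau(\phi))||<\pi$, every unit direction in which $\tn{md}_p m$ is \emph{not} contracting lies in small neighbourhoods of a single antipodal pair $\pm u_p$; the John-ellipse characterization of Theorem~\ref{thm2.1} then forces $\Ja^{ir}(\tn{md}_p m)<1$ on an open set of positive measure, and maximality of $\Ja^{ir}$ among all Jacobians converts this into $\mathcal{A}(m)<2\pi$ for \emph{every} area functional. None of this machinery appears in your proposal, so as it stands there is no proof.

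Moreover, the plan you sketch for the missing step aims at the wrong statement. You want a two-dimensional region on which $m$ contracts by a uniform factor $\lambda<1$ \emph{in every direction}; this is far stronger than what is true or needed. Through every interior point of $H^2$ there pass grand arcs whose endpoints form antipodal pairs on which $\gamma$ may well be isometric (only one antipodal pair needs to fail isometry for the hypothesis of Theorem~\ref{thm1.2}), and along such arcs $m$ need not contract at all, so the unit ball of $\tn{md}_p m$ will in general touch the round unit circle in some directions at a.e.\ $p$. The paper's point is precisely that one can tolerate non-contracted directions as long as they cluster near a single pair $\pm u_p$, because then the three-direction criterion of Theorem~\ref{thm2.1} fails and the inscribed-Riemannian Jacobian drops below $1$; handling a general $\mathcal{A}$ goes through the reduction $\mathcal{A}\le\mathcal{A}^{ir}$, not through a scalar contraction factor. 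Finally, your ``softer route'' to the in-particular clause via Theorem~\ref{thm1.1} is circular within this paper -- Theorem~\ref{thm1.1} is obtained as a special case of Theorem~\ref{thm1.2} via~\cite{Iva09} -- and also incomplete on its own terms: converting a Riemannian filling of the metric $d_\gamma$ into a Lipschitz disc in $X$ of no larger $\mathcal{A}$-area requires a $1$-Lipschitz extension of $\gamma$ over that Riemannian disc, which is available for injective targets such as $\ell^\infty$ but not for an arbitrary Banach space $X$.
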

It follows from~\cite[Theorem~$0.4$]{Iva09} that Theorem~\ref{thm1.1} is in fact a special case of Theorem~\ref{thm1.2}. It corresponds to choosing~$X=\ell^\infty$ and~$\mathcal{A}=\mathcal{A}^{ir}$ as Ivanov's inscribed Riemannian area functional. Some other applications of Theorem~\ref{thm1.2} are discussed in Sections~\ref{subsec12} and~\ref{subsec13} below. Beyond these Theorem~\ref{thm1.2} seems also amenable to future applications in cut and paste arguments in the context of systolic inequalities and similar problems.
\subsection{Quadratic isoperimetric spectra}
\label{subsec12}
The isoperimetric profile or (geometric) \emph{Dehn function}~$\delta_X^\mathcal{A}:(0,\infty)\rightarrow [0,\infty]$ of a metric space~$X$ is defined by \begin{equation}
\delta_X^\mathcal{A}(r):=\sup_\gamma\left\{ \tn{Fill}^\mathcal{A}(\gamma)\right\}
\end{equation}
where $\gamma$ ranges over all closed Lipschitz curves in~$X$ such that~$\ell(\gamma)\leq r$. If~$X$ is simply connected and satisfies some weak geometric assumptions then the asymptotic growth of the geometric Dehn function is the same as that of the combinatorial Dehn function of a group acting geometrically on~$X$, see~\cite{Gro83,LWYar}. The latter is a well-studied quasi-isometry invariant in geometric group theory. The isoperimetric spectrum~$\tn{IP}$ is defined as the set of those~$\alpha$ in~$[1,\infty)$ such that there is is a finitely presentable group with asymptotic growth~$\simeq r^\alpha$. By~\cite{Gro83} and~\cite{BB00} its closure is given as
\begin{equation}
\label{eq3}
\overline{\tn{IP}}=\{1\}\cup [2,\infty).
\end{equation}
The gap in~\eqref{eq3} extends into the case of quadratic growth by the following result of Wenger, \cite{Wen08}: \emph{If~$X$ is a proper geodesic metric space such that
\begin{equation}
\label{eq4}
\limsup_{r\rightarrow \infty} \frac{\delta_X^b(r)}{r^2}<\frac{1}{4\pi}
\end{equation}
then $X$ is Gromov hyperbolic}, and hence the asymptotic of its Dehn function in fact even linear. This result is sharp as the Dehn function~$\delta$ of the Euclidean space~$\R^n$ is given by~$\delta(r)=\tfrac{1}{4\pi}\cdot r^2$ independently of~$\mathcal{A}$, see Example~\ref{rem4.3}. The implications of a non-strict inequality in~\eqref{eq4} have been investigated in~\cite{Wen19}.\par
In the present paper we study the following finer non-coarse quantity:
\begin{equation}
C^\mathcal{A}(X):=\sup_{r\in (0,\infty)} \frac{\delta_X^\mathcal{A}(r)}{r^2} \in [0,\infty].
\end{equation}
We call $C^\mathcal{A}(X)$ the \emph{$\mathcal{A}$-quadratic isoperimetric constant} of~$X$. Its investigation may be motivated by the following remarkable result due to Lytchak-Wenger,~\cite{LW18}: \emph{A proper geodesic metric space~$X$ is~$\tn{CAT}(0)$ if and only if}~\begin{equation}
C^b(X)\in \left\{0,\frac{1}{4\pi}\right\}.
\end{equation}
The \emph{$\mathcal{A}$-quadratic isoperimetric spectrum} $\tn{QIS}^\mathcal{A}(\tb{M})$ of a class of metric spaces~$\tb{M}$ is the set of $\mathcal{A}$-quadratic isoperimetric constants of its elements.
Improving~\cite[Theorem 1.2]{Cre20} we are able to give a full description of the quadratic isoperimetric spectrum of the class of all Banach spaces.
\begin{Thm}
\label{thm1.3}
Let $\tn{\tb{Ban}}$ be the class of Banach spaces. Then
\begin{equation}
\label{eq5}
\tn{QIS}^b(\tn{\tb{Ban}})=\Big\{0\Big\}\cup \Big[\frac{1}{4\pi},\frac{1}{2\pi}\Big].
\end{equation}
\end{Thm}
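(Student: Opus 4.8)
The plan is to establish the two inclusions in~\eqref{eq5} separately: that every Banach space satisfies $C^b(X)\in\{0\}\cup[\tfrac{1}{4\pi},\tfrac{1}{2\pi}]$, and conversely that every such value is attained. The organising fact throughout is that a normed space $X$ is \emph{self-similar}: the homothety $x\mapsto\lambda x$ scales lengths of curves by $\lambda$ and the Busemann area $\A^b$ of Lipschitz discs by $\lambda^2$, so $\delta^b_X(\lambda r)=\lambda^2\delta^b_X(r)$ for all $\lambda,r>0$; hence $\delta^b_X(r)=\delta^b_X(1)\,r^2$, and $C^b(X)=\delta^b_X(1)$ agrees with the asymptotic constant $\limsup_{r\to\infty}\delta^b_X(r)/r^2$. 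For the upper bound, reparametrise a closed Lipschitz curve $\gamma$ with constant speed over $(\K,d_\K)$, so that $\tfrac{2\pi}{\ell(\gamma)}\gamma$ is $1$-Lipschitz; by the extension theorem of~\cite{Cre20} it bounds a $1$-Lipschitz disc $m\colon H^2\to X$, whence $\tn{Fill}^b\!\bigl(\tfrac{2\pi}{\ell(\gamma)}\gamma\bigr)\le\A^b(m)\le\mathcal H^2(H^2)=2\pi$ since $\A^b$ is non-increasing under $1$-Lipschitz maps. Rescaling gives $\tn{Fill}^b(\gamma)\le\tfrac{1}{2\pi}\ell(\gamma)^2$, so $C^b(X)\le\tfrac{1}{2\pi}$.

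Next I would show $C^b(X)\ge\tfrac{1}{4\pi}$ whenever $\dim X\ge2$. If $X$ is finite-dimensional it is a proper geodesic space bi-Lipschitz to a Euclidean space of dimension $\ge2$, hence not Gromov hyperbolic, so the contrapositive of Wenger's theorem (recalled around~\eqref{eq4},~\cite{Wen08}) gives $\limsup_r\delta^b_X(r)/r^2\ge\tfrac{1}{4\pi}$, i.e.\ $C^b(X)\ge\tfrac{1}{4\pi}$ by self-similarity. If $\dim X=\infty$, apply Dvoretzky's theorem to obtain, for each $\varepsilon>0$, a finite-dimensional $(1+\varepsilon)$-Euclidean subspace, take a Euclidean circle $\gamma$ in it, and use the general lower bound $\tn{Fill}^b_X(\gamma)\ge\tn{Fill}(d)$ — valid because pulling the norm of $X$ back along any Lipschitz disc spanning $\gamma$ produces, after Riemannian smoothing, a filling of the metric $d$ induced on $\K$ by $\gamma$ of no larger area — where $d$ is $(1+\varepsilon)$-bi-Lipschitz to the boundary distance function of a flat round disc, a minimal filling by Ivanov's theorem~\cite{Iva01}; letting $\varepsilon\to0$ again yields $C^b(X)\ge\tfrac{1}{4\pi}$. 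Since $\mathcal H^2_X\equiv0$ and hence $C^b(X)=0$ when $\dim X\le1$, this proves $\tn{QIS}^b(\tn{\tb{Ban}})\subseteq\{0\}\cup[\tfrac{1}{4\pi},\tfrac{1}{2\pi}]$.

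For the reverse inclusion the endpoints are quickly handled: $0=C^b(\R)$; $\tfrac{1}{4\pi}=C^b(H)$ for any Hilbert space $H$ with $\dim H\ge2$, since $H$ is $\tn{CAT}(0)$ so $C^b(H)\in\{0,\tfrac{1}{4\pi}\}$ by Lytchak–Wenger~\cite{LW18} while $C^b(H)\ge\tfrac1{4\pi}$ by the previous step; and $\tfrac{1}{2\pi}=C^b(C(\K))$ (and likewise $=C^b(L^1)$, into which $(\K,d_\K)$ embeds isometrically as well). Indeed the Kuratowski embedding $\gamma\colon(\K,d_\K)\hookrightarrow C(\K)$ is isometric, so $\ell(\gamma)=2\pi$; the general lower bound above and the Pu inequality~\cite{Pu52} give $\tn{Fill}^b(\gamma)\ge\tn{Fill}(d_\K)=2\pi$, while the $1$-Lipschitz extension $m\colon H^2\to C(\K)$ of~\cite{Cre20} gives $\tn{Fill}^b(\gamma)\le\A^b(m)\le2\pi$; hence $\tn{Fill}^b(\gamma)=2\pi$ and $C^b(C(\K))\ge\tfrac{2\pi}{(2\pi)^2}=\tfrac{1}{2\pi}$, forcing equality with the upper bound.

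The remaining point — realising every $c\in(\tfrac{1}{4\pi},\tfrac{1}{2\pi})$ — is the one I expect to be genuinely hard, and the place where the rigidity of Theorem~\ref{thm1.2} (equivalently Theorem~\ref{thm1.1}) becomes essential rather than decorative. The strategy is to exhibit a path $t\mapsto X_t$ in $\tn{\tb{Ban}}$ with $C^b(X_0)=\tfrac{1}{4\pi}$ and $C^b(X_1)=\tfrac{1}{2\pi}$ along which $t\mapsto C^b(X_t)$ is continuous, and then to invoke the intermediate value theorem. A natural — though possibly not the most efficient — candidate: for $t\in[0,1]$ put $d_t:=(1-t)c_0+t\,d_\K$ on $\K$, where $c_0=2\sin(d_\K/2)$ is the chordal (i.e.\ flat-disc) metric of the round circle of circumference $2\pi$; then $c_0\le d_\K$, every $d_t$ has length $2\pi$, and $t\mapsto\tn{Fill}(d_t)$ runs continuously and monotonically from $\tn{Fill}(c_0)=\pi$ to $\tn{Fill}(d_\K)=2\pi$. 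Let $\gamma_t$ be the Kuratowski image of $(\K,d_t)$ and $X_t$ its closed linear span. Then $C^b(X_t)\ge\tn{Fill}^b_{X_t}(\gamma_t)/(2\pi)^2\ge\tn{Fill}(d_t)/(2\pi)^2$ by the general lower bound; the crux is the matching inequality $C^b(X_t)\le\tn{Fill}(d_t)/(2\pi)^2$, which requires both that $\tn{Fill}^b_{X_t}(\gamma_t)$ does not exceed $\tn{Fill}(d_t)$ and, more seriously, that no curve in this tailored space is harder to fill than $\gamma_t$ — a quantitative rigidity statement for which Theorem~\ref{thm1.2}, pinning down how close a near-extremal curve must be to an isometric copy of $(\K,d_\K)$, is exactly the needed input. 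Establishing this, together with the continuity of $t\mapsto C^b(X_t)$, is the technical heart; granted it, the intermediate value theorem and the endpoint computations yield~\eqref{eq5}.
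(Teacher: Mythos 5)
The step that fails is your ``general lower bound'' $\tn{Fill}^b_X(\gamma)\ge\tn{Fill}(d)$, justified by ``Riemannian smoothing'' of the pullback structure of a Lipschitz filling. If you replace the pullback Finsler norms by their inscribed (John) ellipses, boundary distances are preserved but the resulting Riemannian area is $\A^{ir}\ge\A^b$ by \eqref{eq12}, so you do not obtain a Riemannian filling of $d$ of no larger Busemann area; a circumscribed smoothing decreases the area but destroys the boundary condition $\tn{bd}_g\ge d$. In fact the known relation, \cite[Theorem~0.4]{Iva09}, identifies $\tn{Fill}(d)$ with the $\A^{ir}$-filling area in $\ell^\infty$, which gives $\tn{Fill}^b\le\tn{Fill}(d)$ --- the opposite of what you claim. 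The inequality you actually need at the endpoint, that an isometrically embedded round circle has filling area at least $2\pi$, is Ivanov's Finslerian Pu inequality \eqref{eq7} (stated for $\A^{ht}$, hence valid for $\A^b\ge\A^{ht}$); it is a deep cited theorem and not a consequence of classical Pu \cite{Pu52} plus smoothing. The same defect undermines your Dvoretzky argument for the lower bound $\frac{1}{4\pi}$ in infinite dimensions; the paper obtains this bound from the sharp two-dimensional result of \cite{HT79} together with the semi-ellipticity/quasiconvexity of \cite{BI02} (flat discs are minimal fillings in normed spaces). Your finite-dimensional route via Wenger's theorem \cite{Wen08} and your upper bound $C^b(X)\le\frac{1}{2\pi}$ via the majorization theorem of \cite{Cre20}, together with the scaling remark, are fine.

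The second, larger gap is that the actual heart of the theorem --- realizing every value in $\left(\frac{1}{4\pi},\frac{1}{2\pi}\right)$ --- is deferred rather than proved: your family $X_t$ requires both the matching upper bound $C^b(X_t)\le\tn{Fill}(d_t)/(2\pi)^2$, i.e.\ that no curve in the infinite-dimensional space $X_t$ is harder to fill than $\gamma_t$, and continuity of $t\mapsto C^b(X_t)$; neither is established, and neither follows from Theorem~\ref{thm1.2}, which only addresses curves majorized by the round circle. The paper proceeds differently and entirely within finite dimensions: $C^\A$ is continuous for the Banach--Mazur distance and $\tn{\tb{Ban}}_n$ is compact and connected, so $\tn{QIS}^\A(\tn{\tb{Ban}}_n)$ is a closed interval $\left[\frac{1}{4\pi},r_n^\A\right]$ (Theorems~\ref{thm1.4} and~\ref{thm4.6}); the explicit bound $r_n^{ht}\ge\left(1-\frac{4}{n}\right)\cdot\frac{1}{2\pi}$ of Lemma~\ref{lem4.8} --- isometric embeddings of the finite equidistant sets $S_n\subset\K$ into $\R^n_\infty$, the homotopy estimate of Lemma~\ref{lem2.7}, and again \eqref{eq7} applied to the Kuratowski circle --- shows these intervals exhaust $\left[\frac{1}{4\pi},\frac{1}{2\pi}\right)$. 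The endpoint $\frac{1}{2\pi}$ is supplied by $\ell^1$ via Remark~\ref{rem4.10} (or by your $C(\K)$ once \eqref{eq7} replaces the smoothing argument), and $\R$ supplies $0$. So your overall architecture (bounds plus an intermediate-value argument) parallels the paper's, but the two load-bearing inputs --- the correct lower-bound mechanism and a workable connectedness/continuity framework --- are missing or misattributed in the proposal.
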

For $\tn{\tb{Ban}}$ the quadratic isoperimetric spectrum is the same for the most reasonable choices of area functional~$\mathcal{A}$ and given by~\eqref{eq5}. However for the class $\tn{\tb{Ban}}_n$ of normed spaces of fixed finite dimension~$n$ the quadratic isoperimetric spectrum very much depends on the choice of area functional. Beyond the Busemann area functional the most popular area functionals are Benson-Gromov mass* area functional $\mathcal{A}^{m*}$ commonly used in geometric measure theory due to its strong convexity properties and the Holmes-Thompson area functional $\mathcal{A}^{ht}$ which is very natural from the point of view of Finsler geometry. For $n=2$ the spectra of the aforementioned functionals may be determined from classical results in convex geometry as:
\begin{center}
 \renewcommand{\arraystretch}{2.0}
 \begin{tabular}{|c |c |c |c| c|}
 \hline
 \ & $\mathcal{A}=\mathcal{A}^{ht}$ & $\mathcal{A}=\mathcal{A}^{b}$& $\mathcal{A}=\mathcal{A}^{m*}$ &$\mathcal{A}=\mathcal{A}^{ir}$\\ [0.5ex] 
 \hline
 $\tn{QIS}^{\mathcal{A}}(\tn{\tb{Ban}}_2)=...$ & $\left\{\frac{1}{4\pi}\right\}$ & $\left[\frac{1}{4\pi},\frac{\pi}{32}\right]$ & $\left[\frac{1}{4\pi},\frac{1}{8}\right]$&$\left[\frac{1}{4\pi},\frac{1}{8}\right]$ \\
 \hline
\end{tabular},
\end{center}
\ \\
see Example~\ref{ex4.7}. In particular every two dimensional normed space satisfies the Euclidean isoperimetric inequality with respect to~$\mathcal{A}^{ht}$ while it satisfies the Euclidean isoperimetric inequality with respect to~$\mathcal{A}^b$ only if it is Euclidean. 
The following result clarifies the behaviour between dimensions~$2$ and~$\infty$.
\begin{Thm}
\label{thm1.4}
Let $n \geq 2$ and $\mathcal{A}$ be an area functional such that $\mathcal{A}\geq\mathcal{A}^{ht}$. Then the $\mathcal{A}$-quadratic isoperimetric spectrum of~$\tn{\tb{Ban}}_n$ is a compact interval~$[\frac{1}{4\pi},r^\mathcal{A}_n]$ where~$r_n^\mathcal{A}<\frac{1}{2\pi}$ is nondecreasing in~$n$ and converges to $\frac{1}{2\pi}$ as $n\rightarrow \infty$.
\end{Thm}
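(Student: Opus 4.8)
\emph{Plan of proof.} The idea is to combine a compactness argument on the Banach--Mazur compactum with the rigidity statement of Theorem~\ref{thm1.2} and with an explicit family of near extremal curves. First the soft parts. By scale invariance $\delta^{\mathcal{A}}_X(r)/r^2$ is independent of $r$, so $C^{\mathcal{A}}(X)$ equals the supremum of $\tn{Fill}^{\mathcal{A}}(\gamma)/(2\pi)^2$ over all $1$-Lipschitz maps $\gamma\colon(S^1,d_{S^1})\to X$. Such a $\gamma$ extends by \cite{Cre20} to a $1$-Lipschitz map $m\colon H^2\to X$, and since every area functional is nonincreasing under $1$-Lipschitz maps, $\tn{Fill}^{\mathcal{A}}(\gamma)\le\mathcal{A}(m)\le\mathcal{H}^2(H^2)=2\pi$; hence $C^{\mathcal{A}}(X)\le\frac{1}{2\pi}$ for every Banach space. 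The bound $C^{\mathcal{A}}(X)\ge\frac{1}{4\pi}$ for $\dim X\ge2$ and $\mathcal{A}\ge\mathcal{A}^{ht}$ is the $\frac{1}{4\pi}$ lower bound underlying Theorem~\ref{thm1.3}, with equality for $X=\ell^2_n$ by Example~\ref{rem4.3}. If the norms of $X,Y\in\tn{\tb{Ban}}_n$ differ by a factor $\lambda$ then lengths differ by a factor at most $\lambda$ and $\mathcal{A}$-areas by a factor at most $\lambda^2$ (monotonicity and quadratic homogeneity of area functionals), so $\lvert\log C^{\mathcal{A}}(X)-\log C^{\mathcal{A}}(Y)\rvert\le 2\,d_{\mathrm{BM}}(X,Y)$; thus $C^{\mathcal{A}}$ is continuous on the compact and connected space $\tn{\tb{Ban}}_n$ (connectedness e.g.\ via $\lVert\cdot\rVert_t=(1-t)\lVert\cdot\rVert_0+t\lVert\cdot\rVert_1$), and its image is a compact interval $[\frac{1}{4\pi},r_n^{\mathcal{A}}]$ with $r_n^{\mathcal{A}}=\max_X C^{\mathcal{A}}(X)$ attained. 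Monotonicity of $r_n^{\mathcal{A}}$ in $n$ follows from $C^{\mathcal{A}}(X)\le C^{\mathcal{A}}(X\oplus_\infty\R)$, which holds because the coordinate projection $X\oplus_\infty\R\to X$ is a $1$-Lipschitz retraction and hence does not raise filling areas of curves lying in $X$.

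For the strict inequality $r_n^{\mathcal{A}}<\frac{1}{2\pi}$, fix $X\in\tn{\tb{Ban}}_n$ realizing the maximum. Among $1$-Lipschitz maps $(S^1,d_{S^1})\to X$ normalized by $\gamma(1)=0$ --- a compact family by Arzel\`a--Ascoli, as all of them take values in a fixed ball --- the functional $\tn{Fill}^{\mathcal{A}}$ is upper semicontinuous: filling a limit $\gamma$ by an almost optimal disc and interpolating to a nearby $\gamma_k$ over a thin collar costs only $O(\lVert\gamma_k-\gamma\rVert_\infty)$ of extra $\mathcal{A}$-area. Hence the supremum defining $C^{\mathcal{A}}(X)$ is attained by some $\gamma^*$ and $r_n^{\mathcal{A}}=\tn{Fill}^{\mathcal{A}}(\gamma^*)/(2\pi)^2$. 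If $r_n^{\mathcal{A}}=\frac{1}{2\pi}$ then $\tn{Fill}^{\mathcal{A}}(\gamma^*)=2\pi$, so by Theorem~\ref{thm1.2} the curve $\gamma^*$ is an isometric embedding of $(S^1,d_{S^1})$ into $X$. It therefore suffices --- and this is the crux --- to prove that $(S^1,d_{S^1})$ does not embed isometrically into any finite-dimensional normed space; once this is known, the uniform statement $r_n^{\mathcal{A}}<\frac{1}{2\pi}$ is automatic from the compactness of $\tn{\tb{Ban}}_n$ and the attainment of the maximum above.

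To establish the non-embeddability I would proceed as follows. Let $\iota\colon(S^1,d_{S^1})\to X$ be isometric and of unit speed. For each $s$ the sub-arc $\iota|_{[s-\pi/2,\,s+\pi/2]}$ is a geodesic whose length equals the distance of its endpoints, which by Hahn--Banach forces a unit functional $\phi_s$ with $\phi_s\circ\iota$ a triangle wave of slope $\pm1$ peaked at $s$; equivalently $\phi_s(\dot\iota(t))=\operatorname{sign}\cos(t-s)$ for almost every $t$. Averaging $\phi_s$ against $\cos s$ and $\sin s$ produces a linear map $\Psi\colon X\to\R^2$ with $\Psi(\dot\iota(t))=(\cos t,\sin t)$, so $t\mapsto\dot\iota(t)$ is injective and each unit vector $\dot\iota(t)$ carries the whole one-parameter family $\{\phi_s:\lvert s-t\rvert<\pi/2\}$ of supporting functionals, each of which moreover supports an entire sub-arc of the topological circle $\{\dot\iota(t)\}\subset\partial B_X$. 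For $\dim X=2$ this contradicts the fact that a planar convex body has at most countably many non-smooth boundary points, and the ``supporting a whole sub-arc'' constraint is the feature I expect to use to rule out the remaining cases $\dim X\ge3$. (Alternatively the non-embeddability should come out of the proof of Theorem~\ref{thm1.2}: equality forces $m$ to be an isometric embedding of $H^2$, which no normed space admits.) This step is the main obstacle.

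Finally, for $r_n^{\mathcal{A}}\to\frac{1}{2\pi}$, note $r_n^{ht}\le r_n^{\mathcal{A}}\le\frac{1}{2\pi}$, so it is enough to produce finite-dimensional spaces with $C^{ht}$ arbitrarily close to $\frac{1}{2\pi}$. The map $s\mapsto\frac12\mathbbm{1}_{[s,s+\pi]}$ is an isometric embedding of $(S^1,d_{S^1})$ into $L^1$; write $c_\infty$ for its image, a closed curve of length $2\pi$. For any Lipschitz disc $f$ spanning $c_\infty$ the boundary distance function of the pulled-back length metric dominates $d_{S^1}$, since paths in $D^2$ map to paths in $L^1$ that are at least as long as the corresponding chord; hence $\mathcal{A}^{ht}(f)$ is at least the $\mathcal{A}^{ht}$-Finsler filling area of $d_{S^1}$, which equals $2\pi$ by the Pu inequality \cite{Pu52} in its Finsler form (cf.\ the discussion around \cite{Iva09}). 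Thus $\tn{Fill}^{ht}_{L^1}(c_\infty)=2\pi$ and $C^{ht}(L^1)=\frac{1}{2\pi}$. Applying the norm-one conditional expectations $E_n\colon L^1\to\ell^1_n$ gives curves $c_n=E_n\circ c_\infty$ with $c_n\to c_\infty$ uniformly, $\ell(c_n)\le2\pi$, and boundary distance functions $d_{c_n}\le d_{S^1}$ converging uniformly to $d_{S^1}$; the no-shortcut argument then gives $\tn{Fill}^{ht}_{\ell^1_n}(c_n)\ge\tn{Fill}^{ht}_{L^1}(c_n)\ge(\text{$\mathcal{A}^{ht}$-Finsler filling area of }d_{c_n})$, which tends to $2\pi$ by lower semicontinuity of the Finsler filling area under uniform convergence of boundary metrics (compactness of bounded-area discs together with lower semicontinuity of $\mathcal{A}^{ht}$). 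Hence $C^{ht}(\ell^1_n)\to\frac{1}{2\pi}$, so $r_n^{ht}\to\frac{1}{2\pi}$ and therefore $r_n^{\mathcal{A}}\to\frac{1}{2\pi}$. (For $\mathcal{A}\ge\mathcal{A}^b$ one may instead exhaust a Banach space witnessing $C^b=\frac{1}{2\pi}$ from Theorem~\ref{thm1.3}.)
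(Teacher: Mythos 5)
Your overall architecture is the same as the paper's: continuity of $C^\mathcal{A}$ on the Banach--Mazur compactum gives a compact interval, monotonicity in $n$ via the $1$-Lipschitz retraction $X\oplus_\infty\R\to X$, the endpoints $\frac{1}{4\pi}$ come from \cite{HT79} and quasiconvexity \cite{BI02}, the strict bound $r_n^\mathcal{A}<\frac{1}{2\pi}$ is to follow from an extremal space plus an extremal curve (your Arzel\`a--Ascoli/collar argument is essentially Lemma~\ref{lem2.7}/Corollary~\ref{lem2.8} and Lemma~\ref{lem1.5}), Theorem~\ref{thm1.2} forcing that curve to be an isometric copy of $(\K,d_\K)$, and the asymptotics via nearly isometric circles and the Ivanov--Pu inequality. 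However, there is a genuine gap exactly at the point you yourself flag as ``the main obstacle'': you do not prove that $(\K,d_\K)$ admits no isometric embedding into a finite-dimensional normed space (the paper's Lemma~\ref{lem1.6}), and without it the whole strict inequality $r_n^\mathcal{A}<\frac{1}{2\pi}$ collapses. Your sketch does not close it: the supporting functionals $\phi_s$ produced by Hahn--Banach need not be distinct for distinct $s$ (a single functional can support an entire face of the unit ball, as happens for $\R^n_\infty$), so the points $\dot\iota(t)$ need not be non-smooth points of $\partial B_X$ and the ``uncountably many corners'' contradiction fails already for $\dim X=2$; for $\dim X\geq 3$ you offer no argument at all. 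The parenthetical alternative is also unjustified: Theorem~\ref{thm1.2} says nothing in the equality case, so it does not force $m$ to be an isometric embedding of $H^2$ (and e.g.\ $\ell^\infty$ does contain isometric copies of $H^2$). The paper closes this gap differently and quite cheaply: by Lemma~\ref{lem4.1} a geodesic satisfies $\Lambda(\gamma'(t))=1$ a.e.\ for a norming functional $\Lambda$ of its chord, so an isometric circle splits at \emph{every} antipodal pair into two opposite geodesics, whence $\gamma'$ is nowhere approximately continuous; but a Borel map into a separable space is approximately continuous almost everywhere. You should either reproduce an argument of this kind or repair your averaging approach; as it stands the theorem is not proved.

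A secondary, fixable issue concerns your asymptotic lower bound. The route through $\ell^1_n$ is fine in spirit (it parallels the paper's Lemma~\ref{lem4.8} and Remark~\ref{rem4.10}, which use the Kuratowski embedding into $\R^n_\infty$, injectivity of $\ell^\infty$, and Ivanov's inequality~\eqref{eq7}), but the step ``lower semicontinuity of the Finsler filling area under uniform convergence of boundary metrics, via compactness of bounded-area discs'' is asserted without proof and is not innocent: an area bound alone gives no compactness of discs. You do not need it: since $C^{ht}(L^1)\leq\frac{1}{2\pi}<\infty$, the same collar/homotopy estimate you already used (Lemma~\ref{lem2.7}) applied in $L^1$ to $c_n\to c_\infty$ gives $\tn{Fill}^{ht}_{L^1}(c_n)\to\tn{Fill}^{ht}_{L^1}(c_\infty)=2\pi$ directly, and $\tn{Fill}^{ht}_{\ell^1_n}(c_n)\geq\tn{Fill}^{ht}_{L^1}(c_n)$ then yields $C^{ht}(\ell^1_n)\to\frac{1}{2\pi}$. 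With that replacement, and with $\mathcal{A}\geq\mathcal{A}^{ht}$ giving $r_n^\mathcal{A}\geq r_n^{ht}$, your last paragraph becomes sound; the essential missing ingredient remains Lemma~\ref{lem1.6}.
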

The assumption $\mathcal{A}\geq \mathcal{A}^{ht}$ is satisfied for $\mathcal{A}=\mathcal{A}^b,\mathcal{A}^{ht},\mathcal{A}^{m*},\mathcal{A}^{ir}$, see Section~\ref{subsec22}. 
Explicit values of the optimal constants~$r_n^\A$ beyond the aforementioned case $n=2$ remain open. It is natural to think of our setting as the isoperimetric problem in dimension one. In the case $n=2$ we benefit of the coincidence of dimension one and \emph{co}dimension one. The latter is the mostly studied situation and essentially solved in finite dimensional normed spaces as well as many other classes of spaces, see for example~\cite{APT04,Kle92,Cro84,MR02,Rit12}... Beyond dimension one and codimension one isoperimetric inequalities have been obtained in~\cite{Gro83,AK00,Wen05}. However sharp constants are only known in Euclidean space and very few other situations, see~\cite{Alm86,Sch18}.
\subsection{Minimal surfaces in Finsler manifolds}
\label{subsec13}
Let $X$ be a proper metric space which satisfies a local quadratic isoperimetric inequality and $\Gamma$ a rectifiable Jordan curve in~$X$. 
Set $\Lambda(\Gamma,X)$ to be the set of those Sobolev discs~$u\in W^{1,2}(D^2,X)$ for which the trace $u_{|\K}$ gives a monotone parametrization of~$\Gamma$. The following solution of the Plateau problem has been given by Lytchak-Wenger in~\cite{LW17a,LW17b}:\emph{ if $\Lambda(\Gamma,X)\neq \emptyset$ then there is $u\in \Lambda(\Gamma,X)$ of least parametrized Hausdorff measure which moreover may be chosen infinitesimally isotropic.} Such $u$ will be called a \emph{solution of the Plateau problem}. Variants of the metric space valued Plateau problem have been solved for collections of Jordan curves and surfaces of higher genus in~\cite{FW19} and for self-intersecting curves in~\cite{Cre19}.\par 
For a solution of Plateau's problem~$u$ a factorization $u=\bar{u}\circ P$ with the following properties has been investigated in~\cite{LW18a}:
\begin{itemize}
\item $Z_u$ is a geodesic metric space homeomorphic to $D^2$,
\item $P:D^2\rightarrow Z_u$ is monotone,
\item  and $\bar{u}:D^2\rightarrow X$ is $1$-Lipschitz.
\end{itemize}
An analytically more well-behaved variation of this factorization has been discussed in~\cite{CS19}. In general the branch set of $u$ may be large and the map $P_u$ highly non-injective, see~\cite[Example~11.3]{LW18a}. However Question~$11.4$ in~\cite{LW18a} asks:  \emph{Can the set of branch points of a solution of the
Plateau problem be large if the isoperimetric constant~$C$ is smaller than~$\frac{1}{2\pi}$? Can the map~$P$ be non-injective in this case?} By Theorem~\ref{thm1.4} a positive answer to this question would apply in the case that~$X$ is a finite dimensional normed space or a compact Finsler manifold. This would be desirable as up to now the branch set of solutions of the Plateau problem in Finsler manifolds can only be controlled under restrictive assumptions on~$X$ and~$\Gamma$, see~\cite[Theorem~$1.6$]{OvdM15}.\par 
The quadratic isoperimetric constant of~$X$ also controls the Hölder regularity of solutions of the Plateau problem and more general $X$-valued (quasi-)harmonic discs, see~\cite{LW17a,LW16}. 
In particular Theorem~\ref{thm1.4} may be applied to improve the $\alpha$-Hölder regularity of solutions of the Plateau problem in Finsler manifolds calculated in~\cite[Theorem~$1.4$]{Cre20} beyond the threshold case to $\alpha>\frac{\pi}{8}$. Similar calculations lead to concrete uniform Hölder constants for minimal surfaces in Finsler manifolds in the settings of~\cite{LW16}, \cite{FW19} and~\cite{Cre19}.
\subsection{Outline of proof and byproducts}
In this subsection we shortly discuss the main ideas entering in the proofs of Theorems~\ref{thm1.2} and~\ref{thm1.4}. For sake of simplicity we restrict here to discussing the Holmes-Thompson area functional~$\mathcal{A}^{ht}$. All quantities in this subsection shall be understood with respect to this choice of area functional.\par 
We start with the proof of Theorem~\ref{thm1.2}. For sake of simplicity we restrict here to the case that~$X$ is finite dimenisonal. Let $p \in H^2$ be a point of differentiability of~$m$ and let $v\in T_p H^2$ and $q,\bar{q}\in \K$ be the endpoints of the grand arc passing through~$p$ in direction~$v$. If $||d_p m(v)||=|v|$ then there is $\Lambda \in X^*$ satisfying $||\Lambda||=1$ and $\Lambda(d_pf(v))=|v|$. A somewhat analytical argument involving the optimal transport going on in the proof of the main result of \cite{Cre20} then shows that \[(\Lambda\circ \gamma)(\bar{q})-(\Lambda\circ \gamma)(q)=\pi\] and hence~$\gamma$ restricted to $\{q,\bar{q}\}$ is isometric. By assumption this cannot hold for all~$q$ and in particular~$m$ must be infinitesimally shrinking in some direction. As the Holmes-Thompson area functional is sensitive to such infinitesimal changes the map~$m$ is area decreasing. The proof for general Banach spaces and area functionals that we perform below is conceptually similar but more technical.\par
To prove Theorem~\ref{thm1.4} we endow $\tn{\tb{Ban}}_n$ with the Banach-Mazur distance. Then $C(X)$ is continuous in~$X$ and hence the quadratic isoperimetric spectrum of~$\tn{\tb{Ban}}_n$ is a compact interval~$[l_n,r_n]$. It then follows from~\cite{HT79} and~\cite{BI02} that $l_n= l_2= \frac{1}{4\pi}$. In order to show
\begin{equation}
\label{eq6}
r_n<\frac{1}{2\pi}
\end{equation}
we prove the existence of extremal curves in the following sense.
\begin{Lem}
\label{lem1.5}
Let $X$ be a finite dimensional normed space. Then there is a bi-Lipschitz embedding $\gamma:\K\rightarrow X$ satisfying
\begin{equation}
\tn{Fill}(\gamma)=C(X)\cdot \ell(\gamma)^2.
\end{equation}
\end{Lem}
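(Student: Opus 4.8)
The plan is to obtain an extremal curve as a limit of an isoperimetric-ratio maximizing sequence and then to promote it to a bi-Lipschitz embedding by elementary cutting. First I would rewrite the constant: since $X$ is normed, composing a curve with the homothety $x\mapsto\lambda x$ multiplies its length by $\lambda$ and its $\A$-filling area by $\lambda^2$, so $\delta_X(r)=r^2\delta_X(1)$ and
\[
C(X)=\delta_X(1)=\sup_\gamma\frac{\F(\gamma)}{\ell(\gamma)^2},
\]
the supremum running over all non-constant closed Lipschitz curves in $X$. I also record $C(X)\ge\tfrac1{4\pi}>0$; if $\dim X=1$ there is nothing to prove, since $\K$ does not bi-Lipschitz embed into $\R$, so I assume $\dim X\ge 2$.

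Next I would pick curves $\gamma_k$ with $\F(\gamma_k)/\ell(\gamma_k)^2\to C(X)$; after a homothety I may assume $\ell(\gamma_k)=1$, after reparametrization that each $\gamma_k$ is a constant-speed loop over a circle of unit length, and after a translation that $\gamma_k(0)=0$, so all $\gamma_k$ take values in the ball of radius $\tfrac12$. As $X$ is finite dimensional, Arzel\`a--Ascoli produces a subsequence converging uniformly to a $1$-Lipschitz closed curve $\gamma$, and lower semicontinuity of length gives $\ell(\gamma)\le1$, hence $\F(\gamma)\le C(X)\ell(\gamma)^2\le C(X)$. The essential point is that no area is lost in the limit. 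To see this, given $\varepsilon>0$ I would take a filling $f$ of $\gamma$ with $\A(f)\le\F(\gamma)+\varepsilon$ and glue onto it the thin annulus $(s,t)\mapsto(1-t)\gamma(s)+t\gamma_k(s)$, whose $t$-derivative has norm $\le\|\gamma_k-\gamma\|_\infty$ while its $s$-derivative has norm $\le1$, so its $\A$-area is bounded by a universal constant times $\|\gamma_k-\gamma\|_\infty\to0$. The resulting fillings of $\gamma_k$ witness $\F(\gamma_k)\le\F(\gamma)+\varepsilon+o(1)$, so $C(X)\le\F(\gamma)$. Combined with the previous inequality this forces $\ell(\gamma)=1$, $\F(\gamma)=C(X)=C(X)\ell(\gamma)^2$, and $\gamma$ has constant speed.

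It then remains to show that every such extremal curve $\gamma$ of length $1$ is a bi-Lipschitz embedding. Here I would use repeatedly the elementary bound $\F(\sigma)\le\F(\sigma_1)+\F(\sigma_2)$ whenever a closed curve $\sigma$ is cut at two parameters into closed subloops $\sigma_1,\sigma_2$; if the two cut points have distinct images one first inserts the Euclidean segment of $X$ joining them into each subloop, the two copies cancelling when near-optimal fillings of $\sigma_1$ and $\sigma_2$ are glued along it. If $\gamma$ had a self-intersection then---having constant speed and hence no constant subarc---it would split into subloops of positive lengths $\ell_1,\ell_2$ with $\ell_1+\ell_2=1$, forcing $C(X)=\F(\gamma)\le C(X)(\ell_1^2+\ell_2^2)<C(X)(\ell_1+\ell_2)^2=C(X)$, absurd; so $\gamma$ is a Jordan curve. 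If $\gamma$ were not bi-Lipschitz there would be parameters $s_k,t_k$ with short-subarc length $a_k>0$ and chord $b_k:=\|\gamma(s_k)-\gamma(t_k)\|\le a_k/k$; injectivity and compactness of the parameter circle force $a_k\to0$. Cutting $\gamma$ at $s_k,t_k$ and inserting the segment $[\gamma(s_k),\gamma(t_k)]$ yields subloops of lengths $a_k+b_k$ and $1-a_k+b_k$, so extremality gives
\[
1\le(a_k+b_k)^2+(1-a_k+b_k)^2=1-2a_k+2b_k+2a_k^2+2b_k^2,
\]
i.e.\ $a_k\le b_k+a_k^2+b_k^2$, which contradicts $b_k=o(a_k)$, $a_k\to0$. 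Hence $\gamma$ is a bi-Lipschitz embedding with $\F(\gamma)=C(X)\ell(\gamma)^2$.

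The main obstacle is the non-degeneration of the maximizing sequence---ruling out that the limit collapses to a point or loses length---because the ordinary lower semicontinuity of filling area runs in the unhelpful direction; the thin-annulus comparison is precisely what repairs this, after which upgrading injectivity to the bi-Lipschitz property is only a short infinitesimal cutting argument.
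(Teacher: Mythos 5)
Your proof is correct, and its skeleton is the paper's: extract an extremal curve from a ratio-maximizing sequence of unit-length, constant-speed curves via Arzel\`a--Ascoli, then show that a failure of injectivity or of a chord-arc bound would contradict extremality through a quadratic comparison. The two technical inputs are supplied differently, though. Where the paper quotes Corollary~\ref{lem2.8} (continuity of $\F^\A$ on $\Gamma^L(X)$, proved via the geodesic homotopy of Lemma~\ref{lem2.7} and the quadratic isoperimetric inequality), you use the linear-interpolation annulus $(s,t)\mapsto(1-t)\gamma(s)+t\gamma_k(s)$, whose area is $O(\|\gamma_k-\gamma\|_\infty)$ by monotonicity and the transformation law; this is more elementary and needs only the one-sided estimate $C(X)\le\F^\A(\gamma)$, but it exploits the linear structure of $X$, whereas Corollary~\ref{lem2.8} works in any geodesic space with a quadratic isoperimetric inequality. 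For the bi-Lipschitz upgrade the paper applies Lemma~\ref{lem2.7} once, contracting to a point a subarc that violates the chord-arc condition with the explicit constant $\lambda=\sqrt2+1$; you instead cut at the two parameters, insert the straight chord, and invoke the subadditivity $\F^\A(\sigma)\le\F^\A(\sigma_1)+\F^\A(\sigma_2)$, obtaining a chord-arc condition with \emph{some} constant by a limiting argument --- enough for the statement, though without an explicit constant. Two facts you use without proof deserve a line each: the gluing subadditivity (routine via the area formula and the fact that the inserted chord contributes zero area, but it is precisely the cut-and-paste the paper sidesteps by Lemma~\ref{lem2.7}), and $C^\A(X)<\infty$, which your divisions by $C(X)$ require and which the paper defers to the proof of Theorem~\ref{thm4.6}. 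Minor remarks: your self-intersection case is just the chord case with $b=0$, so it needs no separate treatment, and the standing assumption $\dim X\ge 2$ of Section~\ref{subsec42} disposes of the one-dimensional case (where, as stated, the conclusion would in fact fail rather than hold vacuously).
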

Note that counterintuitively these extremal curves are planar only if $C(X)=\frac{1}{4\pi}$. The concrete shape of such curves remains mysterious except for very particular cases, see Example~\ref{rem4.3}. To prove~\eqref{eq6} we choose $X\in \tn{\tb{Ban}}_n$ such that $C(X)=r_n$ and within $X$ an extremal curve~$\gamma$. Without loss of generality we may assume that $\ell(\gamma)=2\pi$ and~$\gamma$ is $1$-Lipschitz. \eqref{eq6} is then implied by Theorem~\ref{thm1.2} and the following Lemma.
\begin{Lem}
\label{lem1.6}
Let $X$ be a finite dimensional normed space. Then there is no isometric embedding of $(\K,d_{\K})$ into $X$.
\end{Lem}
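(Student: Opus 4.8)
The plan is to argue by contradiction, using no external input beyond Rademacher's theorem. So suppose $\gamma\colon(\K,d_\K)\to X$ is an isometric embedding into a normed space $X$ with $\dim X=n<\infty$. Write $\K=\R/2\pi\Z$, so that $d_\K$ is the angular metric, of diameter $\pi$, and for $p\in\K$ let $p^{*}:=p+\pi$ be the antipodal point. The elementary fact I would build on is the additivity identity
\[
d_\K(p,t)+d_\K(t,p^{*})=\pi\qquad\text{for all }p,t\in\K,
\]
which holds because $t$ divides one of the two length-$\pi$ arcs joining $p$ to $p^{*}$. For every $p$, pick by Hahn--Banach a functional $\Lambda_{p}\in X^{*}$ with $\|\Lambda_{p}\|=1$ and $\Lambda_{p}\bigl(\gamma(p)-\gamma(p^{*})\bigr)=\|\gamma(p)-\gamma(p^{*})\|=\pi$.

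The heart of the argument is that $\Lambda_{p}$ is then forced to decrease at unit rate along $\gamma$, as measured by $d_\K(p,\cdot)$. Indeed, for any $t\in\K$ one has $\Lambda_{p}(\gamma(p))-\Lambda_{p}(\gamma(t))\le\|\gamma(p)-\gamma(t)\|=d_\K(p,t)$ and $\Lambda_{p}(\gamma(t))-\Lambda_{p}(\gamma(p^{*}))\le\|\gamma(t)-\gamma(p^{*})\|=d_\K(t,p^{*})=\pi-d_\K(p,t)$; adding these two inequalities, whose left-hand sides sum to $\Lambda_{p}(\gamma(p))-\Lambda_{p}(\gamma(p^{*}))=\pi$ and whose right-hand sides sum to $\pi$, forces equality in both, whence
\[
\Lambda_{p}\bigl(\gamma(p)-\gamma(t)\bigr)=d_\K(p,t)=\|\gamma(p)-\gamma(t)\|\qquad\text{for all }t\in\K.
\]
Consequently, for each $t\neq p$ the unit vector $u_{p,t}:=\bigl(\gamma(p)-\gamma(t)\bigr)/\|\gamma(p)-\gamma(t)\|$ lies in the compact convex set $\mathcal F_{p}:=\{u\in X:\|u\|\le 1,\ \Lambda_{p}(u)=1\}$, the face of the unit ball of $X$ exposed by $\Lambda_{p}$ (every element of $\mathcal F_p$ has norm exactly $1$).

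Now the contradiction. Since $\gamma$ is $1$-Lipschitz (its restriction to any arc of length $<\pi$ is an isometric embedding of an interval) and $X$ is finite dimensional, Rademacher's theorem provides a point $p$ at which $\gamma$ is differentiable, and there $\|\gamma'(p)\|=\lim_{h\to0}\|\gamma(p+h)-\gamma(p)\|/|h|=\lim_{h\to0}d_\K(p,p+h)/|h|=1$. Taking $t=p+s$ and $t=p-s$ and letting $s\downarrow 0$ yields $u_{p,p+s}=\bigl(\gamma(p)-\gamma(p+s)\bigr)/s\to-\gamma'(p)$ and $u_{p,p-s}=\bigl(\gamma(p)-\gamma(p-s)\bigr)/s\to\gamma'(p)$; since $\mathcal F_{p}$ is closed, both $\gamma'(p)$ and $-\gamma'(p)$ belong to $\mathcal F_{p}$, and hence $2=\Lambda_{p}(\gamma'(p))+\Lambda_{p}(-\gamma'(p))=0$, which is absurd. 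The only delicate step here is the squeezing estimate of the middle paragraph; everything else is formal. It is also worth stressing that finite dimensionality enters solely to produce a point of differentiability, and that this is indispensable: $(\K,d_\K)$ does embed isometrically into $\ell^{\infty}$, which is exactly why Theorem~\ref{thm1.1} is only a special case of Theorem~\ref{thm1.2}.
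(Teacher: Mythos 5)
Your proof is correct, and it takes a noticeably different route to the contradiction than the paper. The common core is the duality step: both arguments pick a norming functional for an antipodal pair and exploit that $d_\K(p,t)+d_\K(t,\tau(p))=\pi$ forces $\Lambda_p(\gamma(p)-\gamma(t))=\|\gamma(p)-\gamma(t)\|$ for all $t$ (in the paper this appears as Lemma~\ref{lem4.1}, i.e.\ $(\Lambda\circ\gamma)'=\pm1$ a.e.\ along the two arcs between $\gamma(\phi)$ and $\gamma(\tau(\phi))$). The difference is how the contradiction is extracted. The paper argues measure-theoretically: the sign jump of $(\Lambda\circ\gamma)'$ at every antipodal pair means the a.e.-defined derivative $\gamma'$ is nowhere approximately continuous, contradicting the fact that Borel maps into separable metric spaces are approximately continuous almost everywhere. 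You instead localize everything at a single Rademacher point $p$: using that $\|\gamma(p)-\gamma(p\pm s)\|=s$ exactly, the two one-sided difference quotients converge to $\mp\gamma'(p)$, and both lie in the closed face $\{\Lambda_p=1\}$ of the unit ball, giving $2=\Lambda_p(\gamma'(p))+\Lambda_p(-\gamma'(p))=0$. Your version is more elementary (only one-dimensional Rademacher and Hahn--Banach, no approximate-continuity theorem and no need for the fundamental theorem of calculus), while the paper's version recycles Lemma~\ref{lem4.1}, which it needs anyway, and makes precise the heuristic that $\gamma'$ would be ``too discontinuous.'' Both arguments use finite dimensionality only to differentiate $\gamma$, and both therefore extend verbatim to Banach spaces with the Radon--Nikodym property, exactly as in Remark~\ref{rem4.2}; your closing remark about $\ell^\infty$ is consistent with that.
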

The proof of Lemma~\ref{lem1.6} relies on an explicit description of geodesics in $X$ in terms of the structure of its unit ball that we give below. From this characterization it follows that if $\gamma$ is an isometric embedding then the derivative $\gamma':\K \rightarrow X$ would be a measurable function which is 'too' discontinuous.\par 
Non surprisingly it is a hard task to give lower bounds on the filling areas of curves. Our main tool at hand is a generalization of the Pu inequality due to Sergei Ivanov, which implies: \emph{if $\gamma:(\K,d_{S^1}) \rightarrow X$ is an isometric embedding then}
\begin{equation}
\label{eq7}
\tn{Fill}(\gamma)\geq 2\pi,
\end{equation}
see~\cite{Iva11,Iva09}. Lemma~\ref{lem1.6} seemingly indicates that \eqref{eq7} cannot be applied. However we can still embed isometrically large finite portions of $\K$ into $\R^n_\infty$. Such embeddings together with a homotopy argument invoking \eqref{eq7} imply
\begin{equation}
\label{eq8}
r_n\geq C(\R^n_\infty)\geq \left(1-\frac{4}{n}\right)\cdot \frac{1}{2\pi}.
\end{equation}
Note that the lower bound~\eqref{eq8} leading to the asymptotic behaviour of $r_n$ is explicit while the upper bound~\eqref{eq6} is obtained by contradiction.
\subsection{Organization}
In Section~\ref{sec2} we recall some basic facts and set up notation. First in Section~\ref{subsec21} we state a characterization of the John ellipse that will be needed in the proof of Theorem~\ref{thm1.2}. Then in Section~\ref{subsec22} we recall the notion of area functionals and discuss different examples and comparison result between them. Finally in Section~\ref{subsec23} we discuss some basic homotopy arguments and their applications. Section~\ref{sec3} is dedicated to the proof of Theorem~\ref{thm1.2}. In Section~\ref{subsec31} we perform the proof modulo a somewhat more technical proposition. This proposition is proven in Sections~\ref{subsec32} and~\ref{subsec33}. To this end we also have to recall the construction of the majorization map~$m$ and discuss optimal transport plans on~$\K$. In the remaining Section~\ref{sec4} we perform the proof of Theorem~\ref{thm1.4}. First in Sections~\ref{subsec41} and~\ref{subsec42} we prove Lemma~\ref{lem1.6} and~\ref{lem1.5} respectively. Then in Section~\ref{subsec43} we discuss the quadratic isoperimetric spectra of~$\tn{\tb{Ban}}_n$ for general area functionals. Finally we complete the proofs of Theorem~\ref{thm1.3} and~\ref{thm1.4} in Section~\ref{subsec44} by discussing lower bounds such as~\eqref{eq8}.
\subsection*{Acknowledgements}
I would like to thank my PhD advisor Alexander Lytchak for great support in everything. Also I would like to thank Stefan Wenger for teaching an enlightening mini-course on "Dehn functions \& large scale geometry" at the Young Geometric Group Theory winter school which took place in Les Diablerets in~2018. Furthermore I would like to thank Vladimir Zolotov for giving me a short but very helpful explanation of optimal transport in dimension one.
\section{Preliminaries}
\label{sec2}
\subsection{The John ellipse}
\label{subsec21}
Let $B$ be a compact, convex, centrally symmetric subset of~$\R^2$ which contains the origin in its interior. Then the John ellipsoid theorem states there is a unique ellipse~$E$ of maximal volume contained in~$B$. This ellipse is called the \emph{John ellipse} and satisfies~$E\subseteq B\subseteq \sqrt{2}\cdot E$. We will need the following characterization.
\begin{Thm}
\label{thm2.1}
Let $(X,|.|)$ be an Euclidean plane and $||.||$ a norm on~$X$ which satisfies~$||.||\leq |.|$. Denote by~$E$ the unit ball of~$|.|$ and by~$B$ the unit ball of~$||.||$. Then~$E$ is the John ellipse of~$B$ iff there exist~$v_0,v_1,v_2\in X$ such that $|v_i|=||v_i||=1$ and 
\begin{align}
\label{eq9}
&\langle v_i,v_{i+1} \rangle \geq 0&&; i=0,1,2.
\end{align}
where we define $v_3=\tau(v_0):=-v_0$ as the antipodal point of~$v_0$.
\end{Thm}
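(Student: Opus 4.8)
The plan is to characterize the John ellipse $E$ of $B$ via the known optimality criterion from John's theorem: $E$ is the unique maximal-volume ellipse inside $B$ if and only if there are contact points $u_1,\dots,u_k \in \partial E \cap \partial B$ and weights $c_j > 0$ such that $\sum_j c_j u_j \otimes u_j = \mathrm{Id}$ (equivalently $\sum_j c_j \langle x, u_j\rangle^2 = |x|^2$ for all $x$), and — because $B$ is centrally symmetric — these contact points can be taken in antipodal pairs. Here $E$ is the Euclidean unit ball, so $|u_j| = 1$, and since $\|.\| \le |.|$ with equality on contact points, also $\|u_j\| = 1$. In dimension $2$ the decomposition-of-identity condition is very rigid: after normalizing the trace, $\sum c_j = 2$, and writing $u_j = (\cos\theta_j, \sin\theta_j)$ the off-diagonal condition forces $\sum c_j \cos 2\theta_j = \sum c_j \sin 2\theta_j = 0$, i.e. the angles $2\theta_j$, weighted by $c_j$, have barycenter zero on the circle. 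The heart of the argument is the elementary fact that a set of points on $S^1$ has $0$ in the convex hull of its weighted version iff it is not contained in an open half-circle; unwinding the factor $2$, this means the directions $\pm u_j$ on the original circle are not contained in any open quarter-disc sector, which is exactly the combinatorial content of~\eqref{eq9}.

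First I would recall and state precisely the contact-point form of John's theorem for centrally symmetric bodies in $\R^2$: $E \subseteq B \subseteq \sqrt{2}E$, and $E$ (the round ball of $|.|$) is the John ellipse of $B$ iff there exist unit vectors $w_1,\dots,w_k$ with $w_i \in \partial E \cap \partial B$ and positive reals $c_i$ with $\sum_i c_i\, w_i \otimes w_i = \mathrm{Id}_{\R^2}$. Second, I would show the "if" direction: given $v_0, v_1, v_2$ as in the statement, with $v_3 = -v_0$, one checks that the six vectors $\pm v_0, \pm v_1, \pm v_2$ together with suitable weights decompose the identity — the sign condition $\langle v_i, v_{i+1}\rangle \ge 0$ guarantees that these directions positively span in the required way, which in $2$D amounts to solving a small explicitly-solvable linear system for the $c_i$ (one uses that no open half-plane through the origin contains all of $\pm v_0, \pm v_1, \pm v_2$, equivalently the doubled angles do not lie in an open half-circle, equivalently $0$ lies in their convex hull). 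Third, for the "only if" direction I would start from a John decomposition $\sum c_i\, w_i \otimes w_i = \mathrm{Id}$, note each $w_i$ satisfies $|w_i| = \|w_i\| = 1$, pass to doubled angles $\phi_i = 2\theta_i$ on $S^1$, and use that the weighted barycenter condition $\sum c_i (\cos\phi_i, \sin\phi_i) = 0$ forces the $\phi_i$ not to lie in an open half-circle; I would then pick three of the $w_i$ (relabelled $v_0, v_1, v_2$ in cyclic order of their angles in $[0,\pi)$, using central symmetry to work modulo $\pi$) whose doubled angles still fail to lie in an open half-circle — a standard "Carathéodory on the circle" / pigeonhole reduction — and verify that cyclic order plus the non-half-circle property translates back to $\langle v_i, v_{i+1}\rangle \ge 0$ for $i = 0,1,2$ with $v_3 = -v_0$.

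The main obstacle, and the step deserving the most care, is the passage between the three equivalent combinatorial conditions in dimension two: (a) a finite set of rays from $0$ is not contained in any open half-plane; (b) the doubled-angle points on $S^1$ are not contained in any open half-circle, equivalently $0$ is in their convex hull; (c) one can extract exactly three representatives $v_0, v_1, v_2$ (plus the forced antipode $v_3 = -v_0$) realizing the angle inequalities~\eqref{eq9}. The subtlety is twofold: keeping track of the angle-doubling (central symmetry of $B$ means contact points come in $\pm$ pairs, so the genuinely free parameter is an angle in $[0,\pi)$, and the identity-decomposition condition naturally lives on the doubled circle $[0,2\pi)$), and the extraction of a minimal spanning triple, where one must argue that if more than three contact directions are needed one can still always find three consecutive ones (in cyclic order of doubled angle) that span — this is where a clean statement of the $2$-dimensional Carathéodory argument for the convex hull of points on a circle does the work. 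Once this combinatorial dictionary is set up, both implications are short linear-algebra verifications; I would present the dictionary as a small self-contained lemma before proving the theorem.
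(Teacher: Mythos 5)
Your plan is correct, and it reaches the theorem by a somewhat different route than the paper, though both rest on the same key fact: John's contact-point criterion, i.e.\ $E$ is the John ellipse of the symmetric body $B$ iff some contact points $w_i\in\partial E\cap\partial B$ carry positive weights with $\sum_i c_i\, w_i\otimes w_i=I_2$. The paper invokes this criterion directly in its two-dimensional, three-contact-point form (citing \cite{GS05}), fixes $v_0=(1,0)$ and the cyclic order, and then solves the resulting linear system in closed form, getting $\lambda_i$ proportional to $\sin(2a_i)$ where $a_i$ are the angular gaps, so that positivity of the weights is read off directly and is seen to match \eqref{eq9}. You instead start from the general form of the criterion (arbitrarily many contact points), split $\sum_i c_i w_i\otimes w_i=I_2$ into the trace condition $\sum_i c_i=2$ plus vanishing of the weighted barycenter of the doubled angles, and then use the circle dictionary (zero in the convex hull iff the points lie in no open half-circle) together with Carath\'eodory to extract three contact directions; this makes you independent of the cited three-point statement and replaces the trigonometric computation by the combinatorial bookkeeping you already identify as the delicate step. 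One point to make explicit in a write-up: when some inequality in \eqref{eq9} is an equality (say angles $0,\pi/4,\pi/2$), the convex combination expressing $0$ necessarily gives weight zero to one of the $v_i$, so in the ``if'' direction you must pass to the subset of strictly positive weights (John's criterion only needs \emph{some} contact points), and in the ``only if'' direction Carath\'eodory may return just two antipodal doubled angles, in which case you take $v_2=v_1$; both degeneracies are harmless, and the paper's closed formula exhibits the same boundary feature ($\lambda_i=0$ when $a_i=\pi/2$).
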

\begin{proof}
We may assume without loss of generality that $X=\R^2$ and $|.|$ is the standard Euclidean norm.
Then $D^2$ is the John ellipse of~$B$ iff there are $v_0,v_1,v_2 \in \R^2$ such that $|v_i|=||v_i||=1$ and $\lambda_0,\lambda_1,\lambda_2>0$ for which
\begin{equation}
\label{eq10}
I_2=\lambda_0\cdot v_0 \otimes v_0+\lambda_1\cdot v_1 \otimes v_1 +\lambda_2\cdot v_2 \otimes v_2,
\end{equation}
see \cite{GS05}. Here $I_2\in GL_2$ denotes the identity matrix. We show that the latter condition is equivalent to~\eqref{eq9}. In either case we may assume $v_0=(1,0)$ and furthermore that $v_0,v_1,v_2$ are cyclically ordered and contained in the upper half plane. Set $a_0:=d_\K(v_1,v_2)$, $a_1:=d_\K(v_2,\tau(v_0))$ and $a_2:=d_\K(v_0,v_1)$. Then~\eqref{eq10} becomes
\begin{equation}
I_2=\lambda_0 \cdot \left( \begin{array}{rr}
1 &0\\
0&0

\end{array}
\right)
+\lambda_1 \cdot \left( \begin{array}{rr}
\cos^2(a_2) &\frac{1}{2}\sin(2a_2)\\
\frac{1}{2}\sin(2a_2)&\sin^2(a_2) 

\end{array}
\right)
+\lambda_2\cdot \left( \begin{array}{rr}
\cos^2(a_1) &-\frac{1}{2}\sin(2a_1)\\
-\frac{1}{2}\sin(2a_1)&\sin^2(a_1) 

\end{array}
\right).
\end{equation}
Solving this system of equations gives 
\begin{equation}
\lambda_i=\frac{\sin(2a_i)}{\sin(2a_1)\sin^2(a_2)+\sin(2a_2)\sin^2(a_1)}
\end{equation}
for $i=0,1,2$. In particular all the $\lambda_i$ are positive iff \eqref{eq9} holds.
\end{proof}
\subsection{Area functionals}
\label{subsec22}
The aim of this subsection is to shortly discuss area functionals in the sense of convex geometry. We will follow the approach of~\cite{LW17b} based on Jacobians. The reader is referred to \cite{Iva09,APT04,Ber14} for other equivalent viewpoints.\par 
Let $\Sigma$ be the set of seminorms on $\R^2$ and $\Sigma_0$ be the set of norms on $\R^2$.
\begin{Def}
A \textit{Jacobian} is a map $\Ja:\Sigma \rightarrow [0,\infty)$ fulfilling the following properties:
\begin{enumerate}[label={\arabic*.}]
\item
$\Ja(|.|)=1$ for $|.|$ the standard Euclidean norm on $\R^2$. \hfill (Normalization)
\item
$\Ja(s) \geq \Ja(s')$ whenever $s\geq s'$. \hfill (Monotonicity)
\item
$\Ja(s \circ T)=|\det{T}|\ \Ja(s)$ for $T \in M_2(\R)$. \hfill (Transformation law)
\end{enumerate}
\end{Def}
\begin{Ex}
It follows readily that $\Ja(s)=0$ if and only if $s$ is degenerate.  So it suffices to define the following examples of Jacobians on a norm $||.||\in \Sigma_0$ with unit ball $B$.
\begin{enumerate}[label={\arabic*.}]
\item The \textit{Busemann Jacobian} $\Ja^b$ is defined by
\begin{equation}
\Ja^b(||.||):=\frac{\pi}{\Leb^2(B)}
\end{equation}
where $\Leb^2$ denotes the standard Lebesgue measure on $\R^2$.
\item The \textit{Holmes-Thompson Jacobian} $\Ja^{ht}$ is defined by 
\begin{equation}
\Ja^{ht}(||.||):=\frac{\Leb^2(B^*)}{\pi}
\end{equation}
where $B^*:=\{v \in \R^2|\ \langle v,w\rangle \leq 1 ;\forall w \in B\}$ is the polar body of $B$.
\item The \textit{Benson-Gromov mass* Jacobian} is defined by
\begin{equation}
\Ja^{m*}(||.||):=\sup_P \frac{4}{\Leb^2(P)}
\end{equation}
where $P$ ranges over all paralellograms containing $B$.
\item \textit{Ivanov's inscribed Riemannian Jacobian} $\Ja^{ir}$ is defined by 
\begin{equation}
\label{eq11}
\Ja^{ir}(||.||):=\frac{\pi}{\Leb^2(E)}
\end{equation}
where $E$ is the John ellipse of $B$. Dually the \emph{circumscribed Riemannian Jacobian}~$\Ja^{cr}$ is defined by taking $E$ in~\eqref{eq11} as a ellipse of least area containing~$B$.
\end{enumerate}
\end{Ex}
\begin{Rem}
The presented Jacobians satisfy the following comparison results. 
\begin{enumerate}[label={\arabic*.}]
\item From the Blaschke-Santaló inequality and the definitions we deduce that
\begin{equation}
\label{eq12}
\Ja^{cr}(||.||)\leq \Ja^{ht}(||.||)\leq \Ja^b(||.||)\leq \Ja^{ir}(||.||)
\end{equation}
where each of the inequalities is strict iff $||.||$ is not Euclidean.
\item
Dually one can see that
\begin{equation}
\label{eq13}
\Ja^{ir}(||.||)\leq \frac{4}{\pi}\cdot \Ja^b(||.||)\leq \frac{\pi}{2}\cdot \Ja^{ht}(||.||)\leq 2\cdot\Ja^{cr}(||.||)
\end{equation}
where equality is attained iff $B$ is a parallelogram. The middle part is the Mahler-Reisner inequality, see \cite{Tho96}. The other two follow from~\cite{LW17b} and duality.
\item More generally $\Ja^{ir}$ is maximal among all Jacobians and $\Ja^{cr}$ minimal. For a Jacobian~$\Ja$ we define 
\begin{equation}
q^{\Ja}:=\inf_{||.||\in \Sigma_0} \frac{\Ja(||.||)}{\Ja^{ir}(||.||)}\in \left[\frac{1}{2},1\right].
\end{equation}
By \eqref{eq13} we have $q^{ir}=1$, $q^{ht}=\frac{2}{\pi}$, $q^{b}=\frac{\pi}{4}$ and $q^{cr}=\frac{1}{2}$. From Theorem~\ref{thm2.1} one may deduce that $q^{m*}=\frac{\sqrt{3}}{2}$ is attained if $B$ is a regular hexagon.
\item For Holmes-Thompson and mass* area functional we have
\begin{equation}
\label{eq14}
\frac{2}{\pi}\cdot\Ja^{m*}(||.||)\leq \Ja^{ht}(||.||)\leq \Ja^{m*}(||.||)
\end{equation}
where equality on the left is attained iff $B$ is a paralleolgram and equality on the right is attained iff $||.||$ is Euclidean. This follows from~\cite{APT04} and the previous observations.
\end{enumerate}
\end{Rem}
In the following let $U\subset \R^2$ be open, $X$ be a metric space and $f:U\rightarrow X$ be a Lipschitz map.
At almost every $p\in U$ the \emph{metric differential} $\tn{md}_p f$ is well defined as a seminorm on~$\R^2$ via
\begin{equation}
(\tn{md}_p f)(v):=\lim_{t\rightarrow 0} \frac{d(f(p+tv),f(p))}{|t|}.
\end{equation}
For a Jacobian $\Ja$ we define the corresponding \emph{area functional}~$\mathcal{A}^\Ja$ by setting
\begin{equation}
\label{eq15}
\A^{\Ja}(f):=\int_U \Ja(\tn{md}_p f)\ \tr \Leb^2(p).
\end{equation}
For $\mathcal{A}^b$ equation \eqref{eq15} is consistent with equation~\eqref{eq2} by a variant of the area formula, see~\cite{Kir94}.
The definition of Jacobians is cooked up as to obtain the following natural list of properties for the arising area functionals.
\begin{Lem}
\begin{enumerate}[label={\arabic*.}]
\item If $X$ is a Riemannian manifold, then $\mathcal{A}(f)=\mathcal{A}^b(f)$. \par\hfill (Normalization)
\item If $g:X\rightarrow Y$ is $L$-Lipschitz, then $\mathcal{A}(g\circ f)\leq L^2\cdot\mathcal{A}(f)$. \par \hfill (Monotonicity)
\item If $V\subseteq \R^2$ is a open and $\varphi:V\rightarrow U$ is bi-Lipschitz, then $\mathcal{A}(f\circ \varphi)=\mathcal{A}(f)$.\par \hfill (Coordinate invariance)
\end{enumerate}
\end{Lem}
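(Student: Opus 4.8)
The plan is to derive all three properties from a single mechanism: the behaviour of the metric differential $\tn{md}_p f$ under pre- and post-composition, combined with the three defining axioms of a Jacobian and a few standard facts from geometric measure theory (existence of metric differentials almost everywhere, preservation of null sets by bi-Lipschitz maps, and the area formula for bi-Lipschitz changes of variables). None of the three statements requires anything deeper than bookkeeping once these pointwise building blocks are in place.

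First I would record the pointwise ingredients. For a Lipschitz map $f:U\to X$, Kirchheim's theorem guarantees that $\tn{md}_p f$ exists for a.e.\ $p\in U$. If $g:X\to Y$ is $L$-Lipschitz, then the inequality $d(g(f(p+tv)),g(f(p)))\le L\, d(f(p+tv),f(p))$ passes to the limit as $t\to 0$ and gives $\tn{md}_p(g\circ f)\le L\cdot\tn{md}_p f$ as seminorms on $\R^2$, at a.e.\ $p$ where both sides are defined. If instead $\varphi:V\to U$ is bi-Lipschitz, then $\varphi$ is differentiable with $d_q\varphi\in GL_2(\R)$ at a.e.\ $q\in V$; since bi-Lipschitz maps map null sets to null sets, at a.e.\ such $q$ the point $\varphi(q)$ lies in the full-measure set on which $\tn{md} f$ exists, and the chain rule for metric differentials yields $\tn{md}_q(f\circ\varphi)=(\tn{md}_{\varphi(q)}f)\circ d_q\varphi$. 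Finally, if $X$ is a Riemannian manifold, then working in charts shows $f$ is classically differentiable a.e., so $\tn{md}_p f(v)=\sqrt{g_{f(p)}(d_pf(v),d_pf(v))}$, which is a (possibly degenerate) Euclidean seminorm.

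With these in hand the three assertions are immediate. For coordinate invariance, the transformation law gives $\Ja(\tn{md}_q(f\circ\varphi))=|\det d_q\varphi|\cdot\Ja(\tn{md}_{\varphi(q)}f)$ for a.e.\ $q\in V$; integrating over $V$ and applying the area formula for the change of variables $\varphi$ turns $\int_V$ into $\int_U$, which is exactly $\mathcal{A}^{\Ja}(f)$. For monotonicity, I combine $\tn{md}_p(g\circ f)\le L\cdot\tn{md}_p f$ with the monotonicity axiom and then observe $L\cdot s = s\circ(L\cdot I_2)$, so the transformation law gives $\Ja(L\cdot s)=L^2\,\Ja(s)$; integrating the pointwise estimate $\Ja(\tn{md}_p(g\circ f))\le L^2\,\Ja(\tn{md}_p f)$ over $U$ gives $\mathcal{A}(g\circ f)\le L^2\,\mathcal{A}(f)$ (the integrand on the right being bounded by $\operatorname{Lip}(f)^2$, so everything is finite). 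For normalization, it suffices to check that $\Ja(s)=\Ja^b(s)$ for every Euclidean seminorm $s$: if $s$ is degenerate both sides vanish, and if $s=|\cdot|\circ T$ with $T\in GL_2(\R)$ then normalization and the transformation law force $\Ja(s)=|\det T|=\Ja^b(s)$ (well defined since any two such $T$ differ by an orthogonal matrix); applying this to $s=\tn{md}_p f$ a.e.\ and integrating gives $\mathcal{A}(f)=\mathcal{A}^b(f)$, using that $\mathcal{A}^b=\mathcal{A}^{\Ja^b}$, i.e.\ that \eqref{eq15} computes \eqref{eq2} as recalled above.

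The only genuinely substantive point is the chain rule $\tn{md}_q(f\circ\varphi)=(\tn{md}_{\varphi(q)}f)\circ d_q\varphi$ together with the fact that bi-Lipschitz maps preserve null sets — this is what makes the "almost everywhere" quantifiers in the coordinate-invariance step match up correctly. I would treat this as the main technical input and cite it (Kirchheim, and the area formula) rather than reprove it; everything else is a direct and routine consequence of the three Jacobian axioms.
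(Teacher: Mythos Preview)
Your argument is correct and complete. The paper does not actually supply a proof of this lemma: it presents the three properties as the intended consequence of the Jacobian axioms (``The definition of Jacobians is cooked up as to obtain the following natural list of properties'') and leaves the verification to the reader. Your derivation is precisely the routine check the author had in mind, pairing each axiom with the corresponding pointwise behaviour of the metric differential and invoking Kirchheim and the area formula where needed.
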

In particular area functionals naturally extend to assign areas to Lipschitz maps $f:M\rightarrow X$ where $M$ is a smooth $2$-dimensional manifold.
\subsection{Homotopy arguments}
\label{subsec23}
In this section we fix an area functional $\A$ and a metric space $X$.\par 
For a Lipschitz curve $\gamma:\K\rightarrow X$ we define its \textit{$\A$-filling area} by
\begin{equation}
\F^\A(\gamma):=\inf \{\A(f)\ |\ f:\Dc\rightarrow X \tn{ Lipschitz}, f_{|\K}=\gamma\}.
\end{equation}
The crucial observation for homotopy arguments is: \emph{if $h:\K\times [0,1]\rightarrow X$ is a Lipschitz map then
\begin{equation}
|\F^\A(\gamma_0)-\F^\A(\gamma_1)|\leq \A(h)
\end{equation}
where $\gamma_i=h(\cdot,i)$.} The following lemma allows to restrict to curves which are parametrized by constant-speed in most situations.
\begin{Lem}
\label{lem2.6}
Let $\gamma_0,\gamma_1:\K \rightarrow X$ be Lipschitz curves and reparametrizations of each other. Then
\begin{equation}
\F^{\A}(\gamma_1)=\F^{\A}(\gamma_2).
\end{equation}
\end{Lem}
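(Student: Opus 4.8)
The plan is to reduce the statement to one special comparison and then to invoke the homotopy inequality recalled just above. Concretely, for a non-constant Lipschitz curve $\gamma:\K\rightarrow X$ let $\bar\gamma:\K\rightarrow X$ denote a constant-speed reparametrization of $\gamma$. I will show that $\F^\A(\gamma)=\F^\A(\bar\gamma)$ by producing a Lipschitz homotopy $h:\K\times[0,1]\rightarrow X$ from $\gamma$ to $\bar\gamma$ with $\A(h)=0$; the lemma then follows because two curves that are reparametrizations of each other admit, up to an isometry of $\K$, the same constant-speed reparametrization. (If $\gamma_0$ and $\gamma_1$ are constant both filling areas are trivially $0$, so assume $\ell(\gamma_0)>0$.) The vanishing of $\A(h)$ will come from the fact that $h$ factors through the $1$-manifold $\K$, so its metric differential is degenerate almost everywhere, and every Jacobian vanishes on degenerate seminorms.

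First I would set up $\bar\gamma$. Put $L:=\ell(\gamma)>0$ and let $s:\K\rightarrow\R/L\Z$ send $x$ to the length of $\gamma$ from a fixed basepoint to $x$. Then $s$ is monotone, surjective and $\tn{Lip}(\gamma)$-Lipschitz, and $\gamma(x)=\gamma(x')$ whenever $s(x)=s(x')$, so $\gamma$ factors as $\gamma=\bar\gamma\circ s$ with $\bar\gamma:\R/L\Z\rightarrow X$ a $1$-Lipschitz map; after rescaling $\R/L\Z$ to $\K$ this becomes a $\tfrac{L}{2\pi}$-Lipschitz map $\bar\gamma:\K\rightarrow X$ of constant speed together with a Lipschitz monotone surjection $s:\K\rightarrow\K$ with $\gamma=\bar\gamma\circ s$. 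To build the homotopy I would lift $s$ to a Lipschitz map $\tilde s:\R\rightarrow\R$ with $\tilde s(x+2\pi)=\tilde s(x)+2\pi$, set $\widetilde H(x,t):=(1-t)\tilde s(x)+t x$ — a Lipschitz map on $\R\times[0,1]$ with $\widetilde H(x+2\pi,t)=\widetilde H(x,t)+2\pi$ — and let $H:\K\times[0,1]\rightarrow\K$ be the induced Lipschitz map, so that $H(\cdot,0)=s$ and $H(\cdot,1)=\tn{id}_\K$. Then $h:=\bar\gamma\circ H:\K\times[0,1]\rightarrow X$ is Lipschitz with $h(\cdot,0)=\gamma$ and $h(\cdot,1)=\bar\gamma$, so the homotopy inequality yields $|\F^\A(\gamma)-\F^\A(\bar\gamma)|\leq\A(h)$.

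Next I would check $\A(h)=0$. At almost every $p$ one has $(\tn{md}_p h)(v)\leq\tn{Lip}(\bar\gamma)\cdot(\tn{md}_p H)(v)$, and choosing a real lift of $H$ near $p$ shows $(\tn{md}_p H)(\cdot)=|d_p\widetilde H(\cdot)|$, a seminorm of rank at most $1$, hence degenerate; therefore $\tn{md}_p h$ is degenerate for almost every $p$. Since $\Ja(s')=0$ for every degenerate seminorm $s'$, this gives $\A(h)=\int_{\K\times[0,1]}\Ja(\tn{md}_p h)\,\tr\Leb^2(p)=0$, and hence $\F^\A(\gamma)=\F^\A(\bar\gamma)$.

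It remains to pass between $\gamma_0$ and $\gamma_1$. Since these are reparametrizations of each other, one checks that their constant-speed reparametrizations satisfy $\bar\gamma_1=\bar\gamma_0\circ\rho$ for some rotation or reflection $\rho$ of $\K$ (the constant-speed reparametrization being unique up to basepoint and orientation). Extending $\rho$ to a bi-Lipschitz self-homeomorphism $R$ of $\Dc$ and using coordinate invariance, every Lipschitz filling of $\bar\gamma_0$ gives one of $\bar\gamma_1$ of the same $\A$-area, and conversely; thus $\F^\A(\bar\gamma_0)=\F^\A(\bar\gamma_1)$, and combined with the previous paragraph this yields $\F^\A(\gamma_0)=\F^\A(\gamma_1)$. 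The argument is essentially routine; the one point needing attention is the Lipschitz regularity of the homotopy, which is the reason for routing through the arc-length reparametrization — whose structure map $s$ is automatically Lipschitz — rather than through an arbitrary reparametrizing homeomorphism of $\K$, which need not be Lipschitz. The remaining care is the bookkeeping with basepoint and orientation just described.
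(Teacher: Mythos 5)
Your proof is correct, and it rests on the same mechanism as the paper's: exhibit a Lipschitz homotopy of zero $\A$-area between the two curves and apply the homotopy inequality, the area vanishing because the homotopy factors through the one-dimensional space $\K$ so that its metric differential is degenerate almost everywhere and every Jacobian vanishes on degenerate seminorms. The only difference is that the paper obtains such a homotopy by citing Lemma~3.6 of the reference [LWYar], whereas you construct it explicitly via the arc-length factorization $\gamma=\bar{\gamma}\circ s$ and linear interpolation of the lifted parameter map, taking correct care of the two genuine subtleties (the reparametrizing homeomorphism of $\K$ need not be Lipschitz, and constant-speed parametrizations agree only up to a rotation or reflection of $\K$), so your argument is a sound, self-contained version of the cited result.
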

\begin{proof}
By Lemma~$3.6$ in \cite{LWYar} there exists a Lipschitz homotopy $h$ between $\gamma_0$ and $\gamma_1$ such that $\A(h)=0$.
\end{proof}
The following simple but useful lemma will be applied various times.
\begin{Lem}
\label{lem2.7}
Let $X$ be a geodesic metric space, $\gamma_0,\gamma_1:\K\rightarrow X$ closed Lipschitz curves and $\phi_0,...,\phi_m\in \K$ cyclically ordered points. Then
\begin{equation}
|\F^\mathcal{A}(\gamma_0)-\F^\mathcal{A}(\gamma_1)|\leq C\cdot \sum_{k=0}^m (l^0_k+l^1_k+d_k+d_{k+1})^2
\end{equation}
where $C:=C^\mathcal{A}(X)$, $d_k:=d(\gamma_0(\phi_k),\gamma_1(\phi_k))$ and $l^i_k:=\ell({\gamma_{i}}_{|[\phi_k,\phi_{k+1}]})$.
\end{Lem}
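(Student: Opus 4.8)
The plan is to build an explicit Lipschitz homotopy from $\gamma_0$ to $\gamma_1$ that decomposes into $m+1$ pieces, one for each arc $[\phi_k,\phi_{k+1}]$, and to estimate the area of each piece by a single filling of a short closed curve, to which the definition of $C^{\mathcal A}(X)$ applies. Concretely, for each $k$ fix a geodesic $\sigma_k$ in $X$ from $\gamma_0(\phi_k)$ to $\gamma_1(\phi_k)$; this is where geodesicity of $X$ enters. Consider the closed curve $\eta_k$ obtained by concatenating ${\gamma_0}_{|[\phi_k,\phi_{k+1}]}$, the geodesic $\sigma_{k+1}$, the reversed curve ${\gamma_1}_{|[\phi_k,\phi_{k+1}]}$, and the reversed geodesic $\sigma_k$. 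Its length is at most $l^0_k+l^1_k+d_k+d_{k+1}$, so by the very definition of the $\mathcal A$-quadratic isoperimetric constant, $\F^{\mathcal A}(\eta_k)\le C\cdot(l^0_k+l^1_k+d_k+d_{k+1})^2$.

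Next I would assemble these fillings into one homotopy. Choose for each $k$ a Lipschitz disc $g_k:\bar D^2\to X$ spanning $\eta_k$ with $\mathcal A(g_k)\le \F^{\mathcal A}(\eta_k)+\varepsilon/(m+1)$. Re-coordinatize $\bar D^2$ as the rectangle $[\phi_k,\phi_{k+1}]\times[0,1]$ (using coordinate invariance of $\mathcal A$, Lemma in Section~\ref{subsec22}), so that the bottom edge is ${\gamma_0}_{|[\phi_k,\phi_{k+1}]}$, the top edge is ${\gamma_1}_{|[\phi_k,\phi_{k+1}]}$, and the two vertical edges are $\sigma_k$ and $\sigma_{k+1}$. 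Since on the vertical edge over $\phi_{k}$ the discs $g_{k-1}$ and $g_k$ both restrict to the same geodesic $\sigma_k$, these rectangular homotopies glue along their vertical boundaries into a single Lipschitz map $h:\K\times[0,1]\to X$ with $h(\cdot,0)=\gamma_0$ and $h(\cdot,1)=\gamma_1$, satisfying $\mathcal A(h)=\sum_{k=0}^m \mathcal A(g_k)\le C\cdot\sum_{k=0}^m(l^0_k+l^1_k+d_k+d_{k+1})^2+\varepsilon$. Then the crucial observation recalled at the start of Section~\ref{subsec23} gives $|\F^{\mathcal A}(\gamma_0)-\F^{\mathcal A}(\gamma_1)|\le \mathcal A(h)$, and letting $\varepsilon\to 0$ finishes the proof.

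The main technical obstacle is the gluing step: one must arrange the reparametrizations of the $g_k$ so that adjacent discs genuinely agree as Lipschitz maps along their common vertical edge (not merely trace out the same geodesic at possibly different speeds), and so that the resulting $h$ is Lipschitz across these seams. This is handled by parametrizing every $\sigma_k$ by constant speed on $[0,1]$ and, if necessary, precomposing each $g_k$ with a bi-Lipschitz self-map of its square that fixes the boundary parametrization — legitimate by coordinate invariance. One should also note the degenerate cases $d_k=0$ (the vertical edge collapses to a point, which is harmless) and the case where ${\gamma_i}_{|[\phi_k,\phi_{k+1}]}$ is constant; none of these obstruct the construction. Everything else — the length bound on $\eta_k$, the area additivity over the pieces, and the passage to the limit in $\varepsilon$ — is routine.
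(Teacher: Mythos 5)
Your proposal is correct and follows essentially the same route as the paper: the paper's (very terse) proof likewise takes geodesics $h(\phi_k,\cdot)$ from $\gamma_0(\phi_k)$ to $\gamma_1(\phi_k)$, fills each resulting quadrilateral of boundary length at most $l^0_k+l^1_k+d_k+d_{k+1}$ using the quadratic isoperimetric inequality, and concludes via the homotopy estimate $|\F^{\mathcal A}(\gamma_0)-\F^{\mathcal A}(\gamma_1)|\leq \mathcal A(h)$. Your write-up merely makes explicit the gluing and reparametrization details that the paper leaves implicit.
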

\begin{proof}
We define the Lipschitz homotopy $h:\K\times [0,1]\rightarrow X$ between $\gamma_0$ and $\gamma_1$ by setting~$h(\phi_k,\cdot)$ to be a geodesic connecting~$\gamma_0(\phi_k)$ to $\gamma_1(\phi_k)$ and filling the remaining squares by application of the quadratic isoperimetric inequality.
\end{proof}
For $L\geq 0$ let $\Gamma^L(X)$ be the set of closed $L$-Lipschitz curves in $X$ endowed with the maximum metric
\[
d_\infty(\gamma_0,\gamma_1):= \max_{\phi \in \K}\ d(\gamma_0(\phi),\gamma_1(\phi)).
\]
\begin{Cor}
\label{lem2.8}
If $X$ is geodesic and satisfies a quadratic isoperimetric inequality then $\F^\mathcal{A}:\Gamma^L(X)\rightarrow \R$ is continuous.
\end{Cor}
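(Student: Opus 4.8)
The plan is to obtain continuity as a direct quantitative consequence of Lemma~\ref{lem2.7}. Note first that the hypothesis that $X$ satisfies a quadratic isoperimetric inequality says precisely that $C:=C^{\mathcal{A}}(X)<\infty$, and the hypothesis that $X$ is geodesic is exactly what licenses the use of Lemma~\ref{lem2.7}. We may assume $L>0$, since for $L=0$ the space $\Gamma^L(X)$ consists only of constant curves, all of which have vanishing filling area, so $\F^{\mathcal{A}}$ is constant.

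Fix $\gamma_0\in\Gamma^L(X)$ and $\varepsilon>0$. I would subdivide $\K$ into $m+1$ arcs of equal $d_\K$-length $\tfrac{2\pi}{m+1}$ by cyclically ordered points $\phi_0,\dots,\phi_m\in\K$. The point of working inside $\Gamma^L(X)$ is that the resulting arc-length bound is uniform over the whole space: for \emph{every} $\gamma_1\in\Gamma^L(X)$ and every $k$ one has $l^i_k=\ell\big({\gamma_{i}}_{|[\phi_k,\phi_{k+1}]}\big)\le\tfrac{2\pi L}{m+1}$ by $L$-Lipschitzness, while $d_k=d(\gamma_0(\phi_k),\gamma_1(\phi_k))\le d_\infty(\gamma_0,\gamma_1)$. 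Feeding these estimates into Lemma~\ref{lem2.7} yields
\[
|\F^{\mathcal{A}}(\gamma_0)-\F^{\mathcal{A}}(\gamma_1)|\ \le\ C\,(m+1)\Big(\tfrac{4\pi L}{m+1}+2\,d_\infty(\gamma_0,\gamma_1)\Big)^{2}.
\]

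To finish I would choose the parameters in the right order: first pick $m$ so large that $\tfrac{64\pi^{2}L^{2}C}{m+1}<\varepsilon$, and only then set $\delta:=\tfrac{2\pi L}{m+1}>0$. If $d_\infty(\gamma_0,\gamma_1)<\delta$, then $\tfrac{4\pi L}{m+1}+2\,d_\infty(\gamma_0,\gamma_1)<\tfrac{8\pi L}{m+1}$, so the displayed right-hand side is at most $C\,(m+1)\big(\tfrac{8\pi L}{m+1}\big)^{2}=\tfrac{64\pi^{2}L^{2}C}{m+1}<\varepsilon$. This proves continuity of $\F^{\mathcal{A}}$ at $\gamma_0$, and since $\gamma_0$ was arbitrary, $\F^{\mathcal{A}}$ is continuous on $\Gamma^L(X)$.

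There is no deep obstacle here, as all the real work has been front-loaded into Lemma~\ref{lem2.7}. The only points that require a little care are the order of quantifiers — the number $m$ of subdivision points must be fixed in terms of $\varepsilon$, $L$ and $C$ \emph{before} $\delta$ is shrunk — and the observation that, because every member of $\Gamma^L(X)$ is $L$-Lipschitz, one and the same subdivision of $\K$ gives a uniform arc-length bound for the varying curve $\gamma_1$ as well as for the fixed curve $\gamma_0$.
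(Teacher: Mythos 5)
Your proof is correct and follows essentially the same route as the paper: both apply Lemma~\ref{lem2.7} to an equidistant subdivision of $\K$, using the uniform $L$-Lipschitz bound on the arc lengths and $d_k\le d_\infty(\gamma_0,\gamma_1)$. The only difference is that the paper chooses the number of subdivision points of order $\pi L/d_\infty(\gamma_0,\gamma_1)$, which upgrades the conclusion to the Lipschitz estimate $|\F^{\mathcal{A}}(\gamma_0)-\F^{\mathcal{A}}(\gamma_1)|\le 72\pi C L\cdot d_\infty(\gamma_0,\gamma_1)$, whereas your fixed-$m$, $\varepsilon$--$\delta$ choice yields plain continuity, which is all the corollary asserts.
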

\begin{proof}
Let $\varepsilon:=d_\infty(\gamma_0,\gamma_1)\leq \pi\cdot L$. Choosing $m:=\lceil \frac{\pi L}{\varepsilon}\rceil$ equidistant points on~$\K$ and applying Lemma~\ref{lem2.7} gives
\begin{equation}
|\F^\mathcal{A}(\gamma_0)-\F^\mathcal{A}(\gamma_1)|\leq C\cdot m\cdot \left(\frac{4\pi L}{m}+2\varepsilon\right)^2\leq 72\pi  C  L\cdot \varepsilon.
\end{equation}
Compare also the proof of~\cite[Lemma~$18$]{Sta18}.
\end{proof}
\section{Proof of Theorem~\ref{thm1.2}}
\label{sec3}
\subsection{Reduction of the proof}
\label{subsec31}
The following majorization theorem has been obtained in~\cite{Cre20}.
\begin{Thm}
\label{thm3.1}
Let $X$ be a Banach space and $\gamma:\K\rightarrow X$ be $1$-Lipschitz. Then $\gamma$ extends to a $1$-Lipschitz map $m:H^2\rightarrow X$.
\end{Thm}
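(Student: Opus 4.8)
The plan is to realize $m$ by an optimal transport / barycenter construction and to reduce the case of an arbitrary Banach space $X$ to the universal one, the Lipschitz-free space over the circle. Let $\mathcal{F}(\K)$ be the Lipschitz-free (Arens--Eells) space of $(\K,d_\K)$, so that the Dirac map $\iota\colon \K\to \mathcal{F}(\K)$ is isometric and the convex set $\mathcal{P}_1(\K)\subset \mathcal{F}(\K)$ of probability measures inherits precisely the $1$-Wasserstein metric $W_1$ of $(\K,d_\K)$. By the universal property the $1$-Lipschitz map $\gamma$ factors as $\gamma=\Phi_\gamma\circ\iota$ with $\Phi_\gamma\colon\mathcal{F}(\K)\to X$ linear of norm $\le 1$; explicitly $\Phi_\gamma(\mu)=\int_\K \gamma\,\mathrm{d}\mu$ on $\mathcal{P}_1(\K)$, and $\Phi_\gamma$ is $1$-Lipschitz there because for an optimal coupling $\pi$ of $\mu$ and $\nu$ one has $\|\Phi_\gamma(\mu)-\Phi_\gamma(\nu)\|\le \int_{\K\times\K}\|\gamma(x)-\gamma(y)\|\,\mathrm{d}\pi(x,y)\le \int_{\K\times\K} d_\K(x,y)\,\mathrm{d}\pi=W_1(\mu,\nu)$. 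Hence it is enough to produce a single $1$-Lipschitz map $\sigma\colon H^2\to (\mathcal{P}_1(\K),W_1)$ with $\sigma(\phi)=\delta_\phi$ for $\phi\in\partial H^2=\K$, for then $m:=\Phi_\gamma\circ\sigma$ is $1$-Lipschitz and restricts to $\gamma$.

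It remains to construct $\sigma$, and this is where the round geometry of $H^2$ enters. I would use the foliation of $H^2$ by \emph{grand arcs}: for every antipodal pair $\{q,\bar q\}$ on $\K$ the half great circle joining them (of length $\pi$) lies, away from its endpoints, in the open hemisphere, and these arcs sweep out $H^2$. Writing $H^2$ in geodesic polar coordinates $(\rho,\varphi)$ about the north pole, with colatitude $\rho\in[0,\tfrac{\pi}{2}]$ and metric $\mathrm{d}\rho^2+\sin^2\!\rho\,\mathrm{d}\varphi^2$, one wants $\sigma(\rho,\varphi)$ to interpolate between $\delta_\varphi$ at $\rho=\tfrac{\pi}{2}$ and the uniform measure at $\rho=0$, the latter being consistent because the coordinate degenerates there. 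The point is that the spreading must be \emph{calibrated to the spherical metric}: a naive mixture $t\,\delta_\varphi+(1-t)\cdot(\text{uniform})$, or a uniform spread over a widening arc, both violate the desired estimate (the first in diagonal directions near the pole, the second already in the longitudinal direction, since translating arcs costs the full angle in $W_1$ but only $\sin\rho$ times the angle in $H^2$). The correct $\sigma$ has to be read off from the optimal transport underlying Pu's filling of $\K$ by $H^2$; equivalently, one is asking for a positive, unital, norm-one linear extension operator $E\colon\mathrm{Lip}_1(\K)\to \mathrm{Lip}_1(H^2)$, with $\sigma(p)$ the probability measure $f\mapsto (Ef)(p)$.

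The main obstacle is verifying that $\sigma$ is $1$-Lipschitz. By Kantorovich--Rubinstein duality this is equivalent to the assertion that for every $1$-Lipschitz $f\colon\K\to\R$ the averaged function $\hat f\colon H^2\to\R$, $\hat f(p):=\int_\K f\,\mathrm{d}\sigma(p)$, is $1$-Lipschitz — a statement purely about the round hemisphere and $f|_{\K}$. I would prove it by restricting to each grand arc, where it becomes a one-dimensional optimal transport estimate, and then controlling the transverse variation using that every grand arc has length exactly $\pi$; that the constant comes out to be exactly $1$ rather than larger is the quantitative content of Pu's inequality, so I expect this step, and not the formal reduction above, to be the heart of the matter. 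Once $m$ has been constructed, its area-decreasing refinement under the hypotheses of Theorem~\ref{thm1.2} is a separate issue, taken up in the following subsections, and is not needed for the present statement.
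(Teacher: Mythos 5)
Your reduction is exactly the one used in the paper (and in \cite{Cre20}, where this theorem is actually proved): compose the Dirac/Wasserstein picture with the barycenter map, i.e. $m=b\circ\gamma_*\circ\mu$ with $b(\nu)=\int_X x\,\mathrm{d}\nu(x)$, note that $b$ and the push-forward $\gamma_*$ are $1$-Lipschitz, and thereby reduce everything to producing a $1$-Lipschitz map $\sigma\colon H^2\to\mathcal{P}_1(\K)$ extending the Dirac embedding (Proposition~\ref{pro3.3} even provides an isometric one). Your verification of the barycenter estimate via an optimal coupling is fine, and the Kantorovich--Rubinstein reformulation as a norm-one, positive, unital extension operator $\mathrm{Lip}_1(\K)\to\mathrm{Lip}_1(H^2)$ is a correct dual description of what is needed. (A small inaccuracy: probability measures are not literally elements of the Lipschitz-free space; one should identify $\mathcal{P}_1(\K)$ with an affine translate, say $\mu\mapsto\mu-\delta_{\phi_0}$, or simply work directly with $W_1$, as your coupling argument in fact does.)

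The genuine gap is that you never construct $\sigma$, and that construction is the entire substance of the theorem. You correctly diagnose that naive interpolations (convex combination with the uniform measure, uniform spreading over widening arcs) violate the estimate and that the measures must be calibrated to the spherical metric, but ``read off from the optimal transport underlying Pu's filling'' is not a construction, and your proposed verification scheme (one-dimensional transport estimates along grand arcs plus control of the transverse variation) is only a strategy sketch; it is precisely the step you yourself flag as the heart of the matter and leave unproven. In the paper the map is explicit: for $p\in H^2\setminus\K$ one takes the absolutely continuous measure $\mu_p$ on $\K$ with density $h_p(\psi)=\tfrac{1}{2}\bigl(d_p''(\psi)\bigr)^{+}+\frac{1-k_p}{2\pi}$, where $d_p$ is the spherical distance function to $p$ restricted to $\K$ and $k_p=\cos d_{S^2}(p,\K)$; showing that $p\mapsto\mu_p$ extends the Dirac embedding and is $1$-Lipschitz (indeed isometric) is the technical core, carried out in Sections~3.2 and~3.3 of \cite{Cre20} through a concrete analysis of optimal transport on the circle (equilibrated cutpoints as in Theorem~\ref{thm3.6}). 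Without exhibiting such a family of measures and proving the Lipschitz bound, your argument establishes only the easy formal reduction, not the theorem.
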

Our aim is to proof Theorem~\ref{thm1.2}. In particular that~$\mathcal{A}(m)<2\pi$ if $\gamma$ is not an isometric embedding.
\begin{Pro}
\label{pro3.2}
Let $\eta:[0,\pi]\rightarrow H^2$ be a grand-arc which is not contained in~$\K$ and~$r \neq  0, \frac{\pi}{2},\pi$. Set $p:=\eta(r)$, $v:=\eta'(r)$ and $\phi:=\eta(0)$. If $p$ is a point of metric differentiability of~$m$ and $(\tn{md}_p m)(v)=1$ then \[||\gamma(\phi)-\gamma(\tau(\phi))||=\pi\]
where~$\tau:\K \rightarrow \K$ is the antipodal map.
\end{Pro}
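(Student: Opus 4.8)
The plan is to reduce the statement to a fact about a single linear functional and then exploit the one--dimensional optimal transport underlying the construction of $m$ in Theorem~\ref{thm3.1}. Since $\gamma$ is $1$--Lipschitz and $d_\K(\phi,\tau(\phi))=\pi$ we have $\|\gamma(\phi)-\gamma(\tau(\phi))\|\leq\pi$, so it suffices to produce $\Lambda\in X^*$ with $\|\Lambda\|=1$ and $\Lambda(\gamma(\tau(\phi))-\gamma(\phi))=\pi$ (after possibly replacing $\Lambda$ by $-\Lambda$). As $\eta$ is a grand arc, $\eta(\pi)=\tau(\phi)$, hence $\gamma(\tau(\phi))-\gamma(\phi)=m(\eta(\pi))-m(\eta(0))$; and for any norm--one $\Lambda$ the function $g:=\Lambda\circ m\circ\eta\colon[0,\pi]\to\R$ is $1$--Lipschitz (as $\eta$ is a unit--speed path in $H^2$ and $m$ is $1$--Lipschitz), with $g(0)=\Lambda(\gamma(\phi))$ and $g(\pi)=\Lambda(\gamma(\tau(\phi)))$. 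So $g(\pi)-g(0)=\pi$ is equivalent to $g'\equiv 1$ almost everywhere on $[0,\pi]$, and the proposition reduces to exhibiting a norm--one $\Lambda$ for which $\Lambda\circ m\circ\eta$ has derivative $1$ almost everywhere.

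To access this I would use the description of $m$ recalled in Section~\ref{subsec32}, namely that $m(q)=\int_\K\gamma\,d\mu_q$ for Borel probability measures $\mu_q$ on $\K$ with $\mu_\phi=\delta_\phi$ for $\phi\in\K$ and with $q\mapsto\mu_q$ being $1$--Lipschitz from $(H^2,d_{H^2})$ to the Wasserstein space $(\mathcal{P}(\K),W_1)$. Along the grand arc $\eta$ the curve $t\mapsto\mu_{\eta(t)}$ is then $1$--Lipschitz from $\delta_\phi$ to $\delta_{\tau(\phi)}$ of length at most $\pi=W_1(\delta_\phi,\delta_{\tau(\phi)})$, hence a unit--speed $W_1$--geodesic; by one--dimensional optimal transport it is realized by the monotone rearrangement, i.e.\ by a family of quantile maps $T_t\colon[0,1]\to\K$ with $T_0\equiv\phi$ and $T_\pi\equiv\tau(\phi)$, along which each unit of mass moves monotonically. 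For small $h>0$, writing $\pi_h$ for the optimal plan from $\mu_{\eta(r)}$ to $\mu_{\eta(r+h)}$, one has
\[
m(\eta(r+h))-m(\eta(r))=\iint\bigl(\gamma(y)-\gamma(x)\bigr)\,d\pi_h(x,y),\qquad \iint d_\K(x,y)\,d\pi_h=h ,
\]
and the hypothesis $(\tn{md}_p m)(v)=1$ gives $\bigl\|\iint(\gamma(y)-\gamma(x))\,d\pi_h\bigr\|=h+o(h)$. Since this norm is at most $\iint\|\gamma(y)-\gamma(x)\|\,d\pi_h$, which in turn is at most $\iint d_\K(x,y)\,d\pi_h=h$, both defects must be $o(h)$: $\gamma$ is asymptotically isometric on the pairs transported near time $r$, and the displacement vectors $\gamma(y)-\gamma(x)$ are asymptotically colinear. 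Extracting from the latter a norm--one functional $\Lambda$ (by Hahn--Banach together with a compactness argument on the compact set $\gamma(\K)$), one obtains that $f:=\Lambda\circ\gamma$ is $1$--Lipschitz on $(\K,d_\K)$ and has slope $+1$ in the direction of transport at $\mu_{\eta(r)}$--almost every point, i.e.\ $g'(r)=1$.

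It then remains to upgrade "$g'(r)=1$" to "$g'\equiv 1$ almost everywhere". Here I would use that the geodesic $t\mapsto\mu_{\eta(t)}$ is governed by the single order--preserving family $T_t$ and that $\phi,\tau(\phi)$ are antipodal, so that every monotone path in $\K$ between them has length exactly $\pi$; hence each unit of mass travels a total distance exactly $\pi$, and if at time $r$ it passes through a point where $f$ has slope $+1$ in its direction of travel, monotonicity forces the same at all times on the side of $\tfrac{\pi}{2}$ containing $r$, whereupon the endpoint conditions $\mu_{\eta(0)}=\delta_\phi$ and $\mu_{\eta(\pi)}=\delta_{\tau(\phi)}$ make $f$ increase at unit rate along its whole length, giving $f(\tau(\phi))-f(\phi)=\pi$, i.e.\ $g(\pi)-g(0)=\pi$. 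The main obstacle is precisely this last step: making the monotone structure of the $W_1$--geodesics $t\mapsto\mu_{\eta(t)}$ coming from the construction of $m$ rigorous, controlling the plans $\pi_h$ uniformly as $h\to 0$, and converting the infinitesimal full--speed condition at a single interior time into a global one. This is also where the hypotheses $r\neq 0,\tfrac{\pi}{2},\pi$ enter — $r=\tfrac{\pi}{2}$ being the point where the transport velocity degenerates and $r=0,\pi$ lying on $\K$ — and I would carry it out, after recalling the construction of $m$ and the relevant facts about one--dimensional optimal transport, in Sections~\ref{subsec32} and~\ref{subsec33}.
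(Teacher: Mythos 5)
Your setup follows the same route as the paper: represent $m=b\circ\gamma_*\circ\mu$, use that $t\mapsto\mu_{\eta(t)}$ is a unit-speed curve in $\mathcal{P}_1(\K)$ joining $\delta_\phi$ to $\delta_{\tau(\phi)}$, compare $\bigl\|\iint(\gamma(y)-\gamma(x))\,d\pi_h\bigr\|$ with $\iint d_\K(x,y)\,d\pi_h=h$, and feed the $o(h)$ defect into norming functionals. Up to there the proposal matches the paper (which, incidentally, keeps a sequence $\Lambda_n$ of functionals norming the increments $(m\circ\eta)(s_n)-(m\circ\eta)(r)$ and never extracts a single limit $\Lambda$, so neither your unproved "asymptotic colinearity" claim nor the weak-$*$ compactness step is needed). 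The genuine gap is exactly the step you yourself flag as the main obstacle, and the mechanism you propose to close it does not work: knowing that $\Lambda\circ\gamma$ has slope $+1$ in the direction of transport at $\mu_{\eta(r)}$-a.e.\ point cannot be upgraded by following mass trajectories in time. Nothing forces a slope condition holding at the locations occupied at time $r$ to persist at the other locations a particle visits at other times, and "each particle travels total distance $\pi$" provides no such rigidity; also $g'(r)=1$ for the scalar function $g=\Lambda\circ m\circ\eta$ is strictly weaker than what you need and cannot be globalized by itself.

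What actually closes the argument is structural information about the measures $\mu_{\eta(s)}$ that your sketch defers: (i) for $p\notin\K$ the density $h_p=\tfrac12(d_p'')^++\tfrac{1-k_p}{2\pi}$ of $\mu_p$ is bounded below by the positive constant $\tfrac{1-k_p}{2\pi}$, so $\mu_{\eta(r)}$ has full support and the slope information obtained at the single time $r$ already concerns $\mathcal{H}^1$-a.e.\ point of the entire half-circle $[\phi,\tau(\phi)]$; and (ii) by the Cabrelli--Molter equilibrated-cutpoint theorem (Lemma~\ref{thm3.6} via Lemma~\ref{lem3.7}), the circle transport from the fixed measure $\mu_{\eta(r)}$ to $\mu_{\eta(s)}$ is the monotone interval transport for the circle cut at $\phi$, yielding a $C^1$ family $T(s,\cdot)$ with $\tfrac{\partial T}{\partial s}>0$ a.e.\ on $[\phi,\tau(\phi)]$ (Lemma~\ref{lem3.4}); this sign statement is what singles out the endpoints $\phi,\tau(\phi)$ of the grand arc, and the hypotheses $r\neq 0,\tfrac{\pi}{2},\pi$ enter in proving it. Given (i) and (ii), the paper concludes that $(\Lambda_n\circ\gamma)'\to 1$ a.e.\ on $[\phi,\tau(\phi)]$, and the fundamental theorem of calculus plus dominated convergence give $\Lambda_n(\gamma(\tau(\phi))-\gamma(\phi))\to\pi$ directly, with no propagation in time at all. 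Without establishing (i) and (ii) — which is the content of Sections~\ref{subsec32} and~\ref{subsec33} and rests on the explicit construction of $\mu$ from \cite{Cre20} — your argument cannot be completed, and the time-propagation substitute you outline is not a valid replacement.
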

The proof of Proposition~\ref{pro3.2} which needs some understanding of the optimal transport going on in the construction of~$m$ will be postponed to the next subsection.
\begin{proof}[Proof of Theorem~\ref{thm1.2}]
By its maximality it suffices to prove Theorem~\ref{thm1.2} for the inscribed Riemannian area functional~$\mathcal{A}^{ir}$. For~$p\in H^2$ let $|.|_p$ be the standard norm on~$T_p H^2$ and $E_p$ its unit ball. If $p$ is a point of metric differentiability of~$m$ then we denote by $B_p\subset T_p H^2$ the unit ball of the seminorm~$\tn{md}_p m$. As $\tn{md}_p m\leq |.|_p$ it suffices to prove that the set of points $p\in H^2$ for which~$E_p$ is not the John ellipse of~$B_p$ has positive measure.\par 
As $\gamma$ is not an isometric embedding there exist~$\phi_0 \in \K$ and an open interval $I\subset \K$ containing~$\phi_0$ such that 
\begin{equation}
||\gamma(\phi)-\gamma(\tau(\phi))||<\pi
\end{equation}
for all~$\phi \in I$. For $p\in H^2\setminus \K$ and $\phi \in \K$ let $\eta_\phi:[0,\pi]\rightarrow H^2$ be the grand-arc passing through~$p$ which has $\eta(0)=\phi$ and let $v(\phi) \in T_p H^2$ be the direction of $\eta_\phi$ in $p$. Then $v$ defines a diffeomorphsm between $\K$ and the unit vectors in the tangent space at~$p$. Set $u_p:= v(\phi_0+\frac{\pi}{2})$ and $w_p:=v(\psi)$ where~$\psi$ is such that $p=\eta_\psi(\frac{\pi}{2})$. If $p\in H^2\setminus \K$ is sufficiently close to~$\phi_0$, then $w_p$ and $\tau(w_p)$ as well $v(A)$ where $A:=\K \setminus(I\cup \tau(I))$ are contained in small neighbourhoods of $u_p$ and $\tau(u_p)$. Hence Theorem~\ref{thm1.2} is implied by Theorem~\ref{thm2.1} and Proposition~\ref{pro3.2}.
\end{proof}
\subsection{The majorization map}
\label{subsec32}
 Let $(X,d)$ be a complete metric space. Denote by $\mathcal{P}(X)$ the set of separably supported Borel probability measures on $X$ and by $\mathcal{P}_1(X)$ the set of those $\mu \in \mathcal{P}(X)$ satisfying 
 \begin{equation}
\int_X d(x,y)\ \tr \mu(y)<\infty
\end{equation}  
for some $x\in X$. For a continuous map $f:X\rightarrow Y$ denote by $f_*:\mathcal{P}(X)\rightarrow \mathcal{P}(Y)$ the push forward map given by $f_*\mu(A)=\mu(f^{-1}(A))$. For $\mu,\nu\in \mathcal{P}(X)$ we call $K\in \mathcal{P}(X\times X)$ a \emph{coupling} from $\mu$ to $\nu$ if $\pi_{1*}K=\mu$ and $\pi_{2*}K=\nu$. Denote the set of couplings from $\mu$ to $\nu$ by $\Pi(\mu,\nu)$. The metric space obtained by endowing $\mathcal{P}_1(X)$ with the distance
\begin{equation}
d_W(\mu,\nu):=\inf_{K\in \Pi(\mu,\nu)} \int_{X\times X} d(x,y) \tr K(x,y)
\end{equation}
will be called the \emph{Wasserstein-$1$-space over $X$} and will also shortly be denoted by~$\mathcal{P}_1(X)$. A measurable map $T:X\rightarrow X$ will be called an \emph{optimal transport plan} from $\mu$ to $\nu$ if $T_*\mu=\nu$ and
\begin{equation}
d_W(\mu,\nu)= \int_X d(x,T(x))\ \tr \mu(x).
\end{equation}
\ \par The proof of Theorem~\ref{thm3.1} relies on four things. First for every complete metric space there is a canonical isometric embedding $\delta$ of $X$ into $\mathcal{P}_1(X)$ given by mapping $x\in X$ to the Dirac measure $\delta_x$. Secondly if $X$ is a Banach space, then there is a $1$-Lipschitz retraction $b:\mathcal{P}_1(X)\rightarrow X$ given by
\begin{equation}
b(\mu):=\int_X x \ \tr \mu(x). 
\end{equation}
The third is that if $f:X\rightarrow Y$ is $1$-Lipschitz then also the push forward map $f_*:\mathcal{P}_1(X)\rightarrow \mathcal{P}_1(Y)$ is $1$-Lipschitz. The last but most important observation is the following proposition.
\begin{Pro}
\label{pro3.3}
There is an isometric embedding $\mu:H^2\rightarrow \mathcal{P}_1(\K)$ extending the Dirac embedding $\delta:\K\rightarrow \mathcal{P}_1(\K)$.
\end{Pro}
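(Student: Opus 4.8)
The plan is to reduce the proposition to a property of the \emph{grand arcs} of $H^2$ and then to write $\mu$ down explicitly. First I would record two elementary facts about the round hemisphere. \emph{Every grand arc is a minimizing geodesic of $H^2$ of length exactly $\pi$ whose two endpoints lie antipodally on $\K=\partial H^2$.} Indeed a grand arc is one half of a great circle of $S^2$, its endpoints are the two points in which that great circle meets the equator, and these are $S^2$-antipodal, hence a pair $\{\phi,\tau(\phi)\}$; moreover a great circle subarc of length $\leq\pi$ is minimizing in $S^2$, hence in $H^2$. \emph{Any two points of $H^2$ either both lie on $\K$, or lie on a common grand arc.} For if one of them is an interior point, the minimizing geodesic between them extends to the great circle containing it, which --- passing through an interior point --- is not the equator and therefore meets $H^2$ in a grand arc through both points.

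\emph{Reduction.} It then suffices to construct $\mu\colon H^2\to\mathcal P_1(\K)$ with $\mu(\phi)=\delta_\phi$ for $\phi\in\K$ and with the property that for every grand arc $\eta\colon[0,\pi]\to H^2$, parametrized by arclength with $\eta(0)=\phi$ and $\eta(\pi)=\tau(\phi)$, the curve $\mu\circ\eta$ is a constant speed geodesic from $\delta_\phi$ to $\delta_{\tau(\phi)}$ in $\mathcal P_1(\K)$. Since $\delta$ is isometric, $d_W(\delta_\phi,\delta_{\tau(\phi)})=d_\K(\phi,\tau(\phi))=\pi$, so $\mu\circ\eta$ has speed $1$; hence for $p=\eta(s)$ and $q=\eta(t)$,
\begin{equation}
d_W(\mu(p),\mu(q))=|s-t|=d_{H^2}(p,q),
\end{equation}
the second equality because $\eta|_{[s,t]}$ is minimizing. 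Together with the two facts above and the isometry of $\delta$ on $\K$ this shows that $\mu$ is an isometric embedding.

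\emph{Construction of $\mu$.} This is the heart of the matter. For an interior point $p$ I would take $\mu(p)$ to be the probability measure on $\K$ with an explicitly prescribed density $h_p$, built from the boundary distance function $\psi\mapsto d_{H^2}(p,\psi)$ in such a way that the $1$-Wasserstein potential of $\mu(p)$ equals $d_{H^2}(p,\cdot)$, i.e.
\begin{equation}
\int_\K d_\K(\psi,x)\,\tr\mu(p)(x)=d_{H^2}(p,\psi)\qquad\text{for all }\psi\in\K,
\end{equation}
and normalized so that $h_p$ concentrates to $\delta_\phi$ as $p\to\phi\in\K$. Such a choice exists because the potential map --- convolution with $d_\K(\cdot,0)$ on $\K$ --- determines a measure only up to its even Fourier coefficients, and the remaining freedom can be spent on making $h_p$ a genuine nonnegative measure with the correct boundary limit. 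Granting the formula, $\mu|_\K=\delta$ is built in, and the displayed identity says that for each $\psi\in\K$ the grand arc through $p$ with endpoint $\psi$, traversed from $\psi$, satisfies $d_W(\delta_\psi,\mu(p))=d_{H^2}(p,\psi)$, its arclength from $\psi$ to $p$.

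\emph{What remains, and the main obstacle.} It then remains to check the geodesic property along a grand arc $\eta$, i.e.\ $d_W(\mu(\eta(s)),\mu(\eta(t)))=|s-t|$. The inequality ``$\geq$'' is immediate from the potential identity and Kantorovich--Rubinstein duality applied to the $1$-Lipschitz test function $d_\K(\phi,\cdot)$:
\begin{equation}
d_W(\mu(\eta(s)),\mu(\eta(t)))\geq\Big|\int_\K d_\K(\phi,x)\,\tr\big(\mu(\eta(s))-\mu(\eta(t))\big)(x)\Big|=|d_{H^2}(\eta(s),\phi)-d_{H^2}(\eta(t),\phi)|=|s-t|.
\end{equation}
The inequality ``$\leq$'', that $\mu$ is $1$-Lipschitz along every grand arc, is the crux: for $s<t$ one must exhibit a coupling of $\mu(\eta(s))$ and $\mu(\eta(t))$ of cost at most $t-s$. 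Here one-dimensional optimal transport on $\K$ enters: writing $F_\nu(x)=\nu([0,x))$ for the cumulative distribution function on the fundamental domain $[0,2\pi)$, one has the classical formula
\begin{equation}
d_W(\nu_0,\nu_1)=\min_{\alpha\in\R}\int_0^{2\pi}|F_{\nu_0}(x)-F_{\nu_1}(x)-\alpha|\,\tr x,
\end{equation}
and the claim reduces to a Lipschitz estimate for $s\mapsto F_{\mu(\eta(s))}$ which can be read off from the explicit form of $h_p$. The delicate point --- and the reason the $1$-Wasserstein rather than the $2$-Wasserstein space is the right target --- is that the optimal transport genuinely branches: mass has to be allowed to split and travel both ways around $\K$, so naive monotone rearrangement fails and the rotation parameter $\alpha$ must be tracked carefully along the arc. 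Once $\mu$ is shown $1$-Lipschitz along every grand arc, the reduction step finishes the proof.
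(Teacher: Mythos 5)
Your overall strategy coincides with the one the paper actually follows (the construction is taken from~\cite{Cre20}): define $\mu_p$ through an explicit density built from the boundary distance function $d_p(\psi)=d_{S^2}(p,\psi)$, and verify the metric identity along grand arcs via one--dimensional optimal transport on the circle. Your reduction to grand arcs is sound, and the lower bound $d_W(\mu(\eta(s)),\mu(\eta(t)))\geq |s-t|$ via Kantorovich--Rubinstein duality (equivalently, the triangle inequality against $\delta_\phi$, using $d_W(\delta_\phi,\nu)=\int_\K d_\K(\phi,x)\,\tr\nu(x)$) is correct. The problem is that the two substantive verifications are asserted rather than proved, and they are exactly where the content of the proposition lies.

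First, the existence of a nonnegative probability density $h_p$ with potential $\int_\K d_\K(\psi,x)h_p(x)\,\tr x=d_p(\psi)$ is not a matter of ``spending the remaining freedom'': since $d_\K(\psi,x)+d_\K(\psi,\tau(x))=\pi$, the antipodally even part of $h_p$ contributes only the constant $\tfrac{\pi}{2}\int h_p$ to the potential, so the equation forces the antipodally odd part of $h_p$ to be $\tfrac14 d_p''$ (this requires checking that $d_p''$ is antipodally odd, which holds by the spherical law of cosines but must be verified), and once the total mass is normalized to $1$ there is no constant left to adjust; that the resulting potential equals $d_p$ exactly, and not $d_p+c$, is a genuine spherical computation, not a choice. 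This is precisely what the paper's explicit formula $h_p=\Ha(d_p'')^+ +\frac{1-k_p}{2\pi}$ encodes, and your proposal never produces it. Second, and more seriously, the $1$-Lipschitz bound along grand arcs --- which you yourself call the crux --- is the technical heart of the proposition: in the paper it is exactly the content delegated to Lemma~\ref{lem3.7} and Sections~$3.2$--$3.3$ of~\cite{Cre20}, i.e.\ the equilibrated-cutpoint analysis of~\cite{CM95} together with an estimate of $\partial_s F_{\mu(\eta(s))}$ obtained from the first variation formula, showing that after cutting the circle at $\phi$ one has $\int_0^{2\pi}\bigl|F_{\mu(\eta(s))}-F_{\mu(\eta(t))}\bigr|\,\tr\psi\leq |s-t|$. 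Saying this ``can be read off from the explicit form of $h_p$'' is not a proof, all the more so since $h_p$ was never written down. Until these two computations are carried out, the proposal establishes only the easy half of the isometry.
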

Theorem~\ref{thm3.1} follows from these observations by setting $m:H^2\rightarrow X$ to be given by
\begin{equation}
m:=b\circ \gamma_* \circ \mu.
\end{equation}
\begin{Lem}
\label{lem3.4}
Let $\eta ,r,\phi$ be as in Proposition~\ref{pro3.2} and assume furthermore~$r<\frac{\pi}{2}$. Then there is a $C^1$-map $T=T(s,\psi):\left(0,\frac{\pi}{2}\right)\times \K \rightarrow \K$ such that for every fixed~$s$:
\begin{enumerate}
\item The map $T(s,\cdot)$ is an optimal transport plan from $\mu_{\eta(r)}$ to $\mu_{\eta(s)}$.
\item $\frac{\partial T}{\partial s}(s,\cdot)$ is positive almost everywhere on the interval~$[\phi,\tau(\phi)]$ and negative almost everywhere on~$[\tau(\phi),\phi]$.
\end{enumerate}
\end{Lem}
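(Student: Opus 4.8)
\emph{The map $T$.} The plan is to write the optimal transport plan down explicitly, using the description of $\mu$ recalled above, and then to differentiate an implicit equation. Recall that for $q$ in the interior of $H^2$ the measure $\mu_q$ is absolutely continuous with a positive smooth density $\rho_q$ on $\K$, that $(q,\psi)\mapsto\rho_q(\psi)$ is smooth, and that $\mu_{\eta(s)}\to\delta_\phi$ as $s\to 0^+$. Fix the orientation of $\K$ for which $[\phi,\tau(\phi)]$ is the arc distinguished by $\eta$, and for $q$ on $\eta\bigl((0,\tfrac\pi2)\bigr)$ let $F_q(\psi):=\mu_q\bigl((\phi,\psi]\bigr)$ be the cumulative distribution function cut at $\phi$; since $\rho_q>0$, $F_q$ lifts to a smooth diffeomorphism of $\R$ commuting with the deck translation. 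Define
\[
T(s,\psi):=F_{\eta(s)}^{-1}\bigl(F_{\eta(r)}(\psi)\bigr),\qquad(s,\psi)\in\Bigl(0,\tfrac\pi2\Bigr)\times\K .
\]
Then $T(s,\cdot)_*\mu_{\eta(r)}=\mu_{\eta(s)}$, the map $T(s,\cdot)$ is the increasing circle homeomorphism with $T(s,\phi)=\phi$, and $T(r,\cdot)=\mathrm{id}$; that $T$ is $C^1$ — in fact $C^\infty$ — jointly in $(s,\psi)$ follows from the positivity and smoothness of the densities by the implicit function theorem applied to $F_{\eta(s)}(T)=F_{\eta(r)}(\psi)$.

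\emph{Optimality and the fixed points $\phi,\tau(\phi)$.} For each $s$ the monotone plan $T(s,\cdot)$ is optimal. On $\K$, optimal plans for the cost $d_\K$ are monotone circle maps, and among those the optimal one is singled out by the position of its cut; the cut $\phi$ is the right one because $\eta$ is a unit-speed geodesic issuing from the boundary point $\phi$, so that along $\eta$ no mass is transported across $\phi$ — concretely $\int_\K|F_{\eta(r)}-F_{\eta(s)}|=|r-s|=d_W(\mu_{\eta(r)},\mu_{\eta(s)})$, which one reads off from the construction of $\mu$. The same bookkeeping shows that $\mu_{\eta(s)}\bigl((\phi,\tau(\phi)]\bigr)$ is independent of $s$, whence $T(s,\cdot)$ also fixes $\tau(\phi)$ and therefore carries each of the two half-circles $[\phi,\tau(\phi)]$ and $[\tau(\phi),\phi]$ onto itself.

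\emph{The sign of $\partial_s T$.} For a fixed second argument put $H(s,\chi):=\partial_s\bigl(\mu_{\eta(s)}\bigl((\phi,\chi]\bigr)\bigr)$. Differentiating the identity $\mu_{\eta(s)}\bigl((\phi,T(s,\psi)]\bigr)=\mu_{\eta(r)}\bigl((\phi,\psi]\bigr)$ in $s$, and using $\partial_\chi\mu_{\eta(s)}\bigl((\phi,\chi]\bigr)=\rho_{\eta(s)}(\chi)>0$, yields
\[
\rho_{\eta(s)}\bigl(T(s,\psi)\bigr)\,\frac{\partial T}{\partial s}(s,\psi)=-\,H\bigl(s,T(s,\psi)\bigr),
\]
so $\tfrac{\partial T}{\partial s}(s,\psi)$ has the sign of $-H\bigl(s,T(s,\psi)\bigr)$. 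Since $T(s,\cdot)$ is increasing and fixes $\phi$ and $\tau(\phi)$, it maps each open half-circle bijectively onto itself; hence the second assertion of the lemma becomes equivalent to the sign pattern
\[
H(s,\chi)<0\ \text{ for }\ \chi\in(\phi,\tau(\phi)),\qquad H(s,\chi)>0\ \text{ for }\ \chi\in(\tau(\phi),\phi),
\]
valid for all $s\in(0,\tfrac\pi2)$, where $H(s,\phi)=H(s,\tau(\phi))=0$ — trivially at $\phi$ and by the constancy noted above at $\tau(\phi)$ — which accounts for the words ``almost everywhere''.

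\emph{The main obstacle.} The heart of the argument is this last sign pattern together with the two inputs it relies on, namely that $\phi$ is an optimal cut and that the two half-circles always carry equal $\mu_{\eta(s)}$-mass. All three are explicit estimates on the density $\rho_{\eta(s)}$: I would establish them by writing $H(s,\chi)=\partial_s\int_\phi^\chi\rho_{\eta(s)}$ out in terms of the spherical geometry of the triple $(\phi,\tau(\phi),\eta(s))$ on $H^2$ and reading off its sign. Informally the content is that as the point $\eta(s)$ slides away from $\phi$ along the grand-arc, the mass of $\mu_{\eta(s)}$ drains out of every sub-arc abutting $\phi$ and accumulates on every sub-arc abutting $\tau(\phi)$; the extra hypothesis $r<\tfrac\pi2$ keeps us on the monotone half of $\eta$, between $\phi$ and the midpoint $\eta(\tfrac\pi2)$. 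Everything else — the definition of $T$, its regularity, and the reduction above — is routine one-dimensional optimal transport.
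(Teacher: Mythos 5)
Your construction of $T$ as $F_{\eta(s)}^{-1}\circ F_{\eta(r)}$ with the CDFs cut at $\phi$, and the reduction of assertion (2) to the sign of $-\partial_s F$ via the implicit function theorem, is exactly the paper's route. The problem is that everything you flag as "the main obstacle" is in fact the entire mathematical content of the lemma, and you do not prove it: the optimality of the cut at $\phi$ (in the paper this is the Cabrelli--Molter theorem on equilibrated cutpoints combined with Lemma~\ref{lem3.7}, imported from \cite{Cre20}), the fact that the two half-circles carry $s$-independent mass, and the sign pattern of $H(s,\chi)=\partial_s F_{\eta(s)}(\chi)$ are all established in the paper from an explicit piecewise formula for $F(s,\psi)$ in terms of $\frac{\partial d}{\partial\psi}$, $\cos\xi$ and $k_s=\sqrt{1-\sin^2(s)\sin^2(\xi)}$, coming from \cite[Lemma~3.1]{Cre20} and the first variation formula. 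Saying you "would establish them by writing $H$ out in terms of the spherical geometry and reading off its sign" is a plan, not a proof. Note also that your reduction asks for more than is true or needed: you want $H(s,\chi)$ strictly signed on the open half-circles, whereas the paper only obtains the weak sign (from the balanced-cutpoint partition $A=[0,\pi)$, $B=[\pi,2\pi]$, valid precisely because $r,s<\frac{\pi}{2}$) together with piecewise analyticity of $\partial_s F(s,\cdot)$, which gives finitely many zeros and hence the almost-everywhere strict sign; the middle piece, where $\partial_s F=\frac{1}{2}k'(s)(1-\frac{\psi}{\pi})$, is what rules out identical vanishing.

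A second, smaller but genuine, error is your regularity claim. The density is $h_p(\psi)=\frac{1}{2}\left(d_p''(\psi)\right)^+ +\frac{1-k_p}{2\pi}$; because of the positive part it is continuous and positive but not smooth in $\psi$, and the kink sits at the $s$-dependent points $b_s\pm\frac{\pi}{2}$. So $T$ is certainly not $C^\infty$, and joint $C^1$-regularity is not an automatic consequence of the implicit function theorem applied to a "smooth positive density": one has to check that $\frac{\partial F}{\partial s}$ exists and is continuous across the moving kink, which the paper does using the identity $\frac{\partial^2 d}{(\partial\psi)^2}\left(s,b_s+\frac{\pi}{2}\right)=0$. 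Without this verification the $C^1$ statement of the lemma (which is what the proof of Proposition~\ref{pro3.2} actually uses, via uniform convergence of $\partial_s T(t_n,\cdot)$) is unsupported in your write-up.
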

The proof of Lemma~\ref{lem3.4} which we postpone further to the next subsection needs some more concrete understanding of the map~$\mu$ and optimal transport plans on~$\K$.
\begin{proof}[Proof of Proposition~\ref{pro3.2}]
Set $\mu_s:=\mu_{\eta(s)}$ and let $T$ be as in Lemma~\ref{lem3.4}. Then 
\begin{equation}
\label{eq16}
(m\circ \eta)(s)=\int_\K \gamma(\psi) \ \tr \mu_s(\psi)=\int_\K \gamma(T(s,\psi)) \ \tr \mu_r(\psi).
\end{equation}
By assumption there is a sequence $s_n\searrow r$ such that 
\begin{equation}
\label{eq17}
1-\frac{1}{n}\leq \frac{||(m\circ \eta)(s_n)-(m\circ \eta)(r)||}{s_n-r}.
\end{equation}
Choose $\Lambda_n\in X^*$ such that $||\Lambda_n||=1$ and 
\begin{equation}
\label{eq18}
\Lambda_n((m\circ \eta)(s_n)-(m\circ \eta)(r))=||(m\circ \eta)(s_n)-(m\circ \eta)(r)||.
\end{equation}
Then by \eqref{eq16}, \eqref{eq17}, \eqref{eq18} and the fundamental theorem of calculus for Lipschitz functions we obtain
\begin{align}
1-\frac{1}{n}&\leq \frac{1}{s_n-r}\cdot \Lambda_n \left(\int_\K \gamma(T(s_n,\psi))-\gamma(\psi)\ \tr \mu_r(\psi)\right)\\
&=\frac{1}{s_n-r}\cdot\int_\K \int^{s_n}_r (\Lambda_n\circ \gamma)'(T(s,\psi))\cdot \frac{\partial T}{\partial s}(s,\psi)\ \tr s\ \tr\mu_r(\psi).
\end{align}
In particular there is $t_n\in [r,s_n]$ such that
\begin{equation}
\label{eq19}
1-\frac{1}{n}\leq \int_\K \underbrace{(\Lambda_n\circ \gamma)'(T(t_n,\psi))}_{=:f_n(\psi)}\cdot \underbrace{\frac{\partial T}{\partial s}(t_n,\psi)}_{=:g_n(\psi)}\ \tr\mu_r(\psi).
\end{equation}
Then as $|f_n|\leq 1$ and $g_n$ converges uniformly
\begin{equation}
\label{eq20}
\int_\K f_n\cdot g_n\ \tr \mu_r \leq \int_\K |g_n|\ \tr \mu_r \overset{n\rightarrow \infty}{\longrightarrow} \int_\K \underbrace{\left|\frac{\partial T}{\partial s}(r,\psi)\right|}_{=:g(\psi)}\ \tr \mu_r(\psi).
\end{equation}
As $\mu$ is an isometric embedding and $T(s,\cdot)$ an optimal transport plan 
\begin{equation}
\label{eq21}
\int_\K g(\psi)\ \tr \mu_r(\psi)= \int_\K \lim_{s\searrow r} \frac{|T(s,\psi)-\psi|}{s-r}\ \tr \mu_r(\psi)=\lim_{s\searrow r} \frac{d_W(\mu_s,\mu_r)}{s-r}=1.
\end{equation}
By \eqref{eq19}, \eqref{eq20} and \eqref{eq21} $f_n\cdot g_n\rightarrow g$ in $\mu_r$-measure. So up to passing to a subsequence $f_n\cdot g_n\rightarrow g$ holds $\mu_r$-almost everywhere and hence also $\mathcal{H}^1$-almost everywhere. Hence by Lemma~\ref{lem3.4} $f_n\rightarrow 1$ almost everywhere on $[\phi,\tau(\phi)]$. As the diffeomorphisms $T(t_n,\cdot)$ and their inverses are of uniformly bounded $C^1$ norm, $(\Lambda_n \circ \gamma)'\rightarrow 1$ in $\mathcal{H}^1$-measure on $[\phi,\tau(\phi)]$ and hence up to again passing to a  further subsequence the convergence holds almost everywhere on $[\phi,\tau(\phi)]$. So by the fundamental theorem of calculus and the dominated convergence theorem
\begin{equation}
||\gamma(\phi)-\gamma(\tau(\phi))||\geq \Lambda_n(\gamma(\tau(\phi))-\gamma(\phi))=\int^{\tau(\phi)}_\phi (\Lambda_n \circ \gamma)'(\psi)\ \tr \psi \overset{n\rightarrow \infty}{\longrightarrow} \pi
\end{equation}
which completes the proof.
\end{proof}
\subsection{Optimal transport plans}
\label{subsec33}
For intervals we have the following simple description of optimal transport plans, cf.~\cite{Vil03}.
\begin{Lem}
\label{lem3.5}
Let~$I\subset\R$ be a closed interval and $\mu,\nu\in \mathcal{P}_1(I)$ be absolutely continuous measures with strictly increasing distribution functions $F_\mu, F_\nu:I\rightarrow [0,1]$. Then an optimal transport map from $\mu$ to $\nu$ is given by $T=F_\nu^{-1}\circ F_\mu$ and
\begin{equation}
\label{eq22}
d_W(\mu,\nu)=\int_I |F_\mu(s)-F_\nu(s)|\tr s=\int_0^1 |F_\mu^{-1}(t)-F_\nu^{-1}(t)|\tr t.
\end{equation}
\end{Lem}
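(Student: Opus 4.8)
The plan is to prove this by sandwiching $d_W(\mu,\nu)$ between a coupling-independent lower bound $\int_I|F_\mu-F_\nu|\,\tr s$ and the transport cost of the candidate map $T=F_\nu^{-1}\circ F_\mu$, and then checking that these two quantities coincide. First I would record the structure of the distribution functions: since $\mu$ is absolutely continuous, $F_\mu$ is continuous, and being in addition strictly increasing it is a homeomorphism of $I$ onto $[0,1]$; hence $F_\mu^{-1}\colon[0,1]\to I$ is a well-defined continuous strictly increasing function, and the substitution $s\mapsto F_\mu(s)$ pushes $\mu$ forward to $\Leb^1|_{[0,1]}$. The same holds for $\nu$. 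Consequently $T:=F_\nu^{-1}\circ F_\mu$ is continuous, and the relation $F_\nu\circ T=F_\mu$ makes the identity $T_*\mu=\nu$ immediate when tested on subintervals of $I$. In particular $T$ is an admissible transport map, so $d_W(\mu,\nu)\le\int_I|s-T(s)|\,\tr\mu(s)$.

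Next comes the lower bound. Starting from the layer-cake identity $|x-y|=\int_{\R}\bigl|\mathbbm{1}_{(-\infty,t]}(x)-\mathbbm{1}_{(-\infty,t]}(y)\bigr|\,\tr t$, for an arbitrary coupling $K\in\Pi(\mu,\nu)$ Tonelli's theorem together with the triangle inequality inside the $\tr t$-integral gives
\begin{align*}
\int_{I\times I}|x-y|\,\tr K
&=\int_{\R}\Bigl(\int_{I\times I}\bigl|\mathbbm{1}_{(-\infty,t]}(x)-\mathbbm{1}_{(-\infty,t]}(y)\bigr|\,\tr K\Bigr)\,\tr t\\
&\ge\int_{\R}\bigl|F_\mu(t)-F_\nu(t)\bigr|\,\tr t=\int_I\bigl|F_\mu(t)-F_\nu(t)\bigr|\,\tr t,
\end{align*}
where $F_\mu,F_\nu$ are extended by $0$ to the left of $I$ and by $1$ to the right, so that the integrand vanishes off $I$. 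Taking the infimum over $K$ yields $d_W(\mu,\nu)\ge\int_I|F_\mu-F_\nu|$.

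Finally I would compute the cost of $T$ and identify the two integral expressions. The substitution $t=F_\mu(s)$ from the first step turns $\int_I|s-T(s)|\,\tr\mu(s)$ into $\int_0^1|F_\mu^{-1}(t)-F_\nu^{-1}(t)|\,\tr t$. It remains to see that $\int_0^1|F_\mu^{-1}-F_\nu^{-1}|\,\tr t=\int_I|F_\mu-F_\nu|\,\tr s$; both compute the planar Lebesgue measure of the symmetric difference of the subgraphs $A_\rho:=\{(s,t)\in I\times[0,1]:t<F_\rho(s)\}$ for $\rho\in\{\mu,\nu\}$, the left-hand side via horizontal slices (the slice at height $t$ being the interval between $F_\mu^{-1}(t)$ and $F_\nu^{-1}(t)$) and the right-hand side via vertical slices. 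Chaining
\[
d_W(\mu,\nu)\ \ge\ \int_I|F_\mu-F_\nu|\,\tr s\ =\ \int_0^1|F_\mu^{-1}-F_\nu^{-1}|\,\tr t\ =\ \int_I|s-T(s)|\,\tr\mu(s)\ \ge\ d_W(\mu,\nu)
\]
forces equality throughout, which simultaneously shows that $T$ is an optimal transport map and establishes \eqref{eq22}.

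The individual steps are routine. The points that need care are the two changes of variables — legitimate precisely because each $F_\rho$ is continuous, strictly increasing and absolutely continuous, so it is a bi-measurable bijection carrying $\rho$ to Lebesgue measure on $[0,1]$ — and the Fubini identity equating the two integral expressions, where phrasing everything through the subgraphs $A_\rho$ avoids an awkward case distinction on the sign of $F_\mu-F_\nu$. I expect this last identification to be the only mildly delicate point.
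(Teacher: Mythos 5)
Your argument is correct and complete: the lower bound via the layer-cake identity for indicator functions, the upper bound via the monotone rearrangement map $T=F_\nu^{-1}\circ F_\mu$, and the Fubini identification of $\int_0^1|F_\mu^{-1}-F_\nu^{-1}|\,\tr t$ with $\int_I|F_\mu-F_\nu|\,\tr s$ through the subgraphs are all sound, and chaining them does force equality. The paper gives no proof of this lemma at all (it is quoted with a reference to Villani), and what you wrote is essentially the standard one-dimensional optimal transport argument behind that citation, so there is nothing substantive to compare; the only cosmetic caveat is that if $I$ is unbounded the inverses $F_\mu^{-1},F_\nu^{-1}$ are defined on $(0,1)$ rather than $[0,1]$, which changes nothing measure-theoretically (and in the paper's application $I=[0,2\pi]$).
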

We are however mainly interested in understanding the Wasserstein distance on~$\K$. This problem has been solved by Cabrelli-Molter in~\cite{CM95} and we shortly discuss their approach here.\par 
For a point $\phi\in \K$ and absolutely continuous $\mu \in \mathcal{P}^1(\K)$ we denote by $\mu^\phi$ the measure on $[0,2\pi]$ which corresponds to $\mu$ under the orientation preserving 'identification' of $[0,2\pi]$ and $\K$ mapping $0$ to $\phi$. Then for $\mu,\nu\in \mathcal{P}_1(\K)$ the inequality
\begin{equation}
d_W(\mu,\nu)\leq d_W(\mu^\phi,\nu^\phi)
\end{equation}
is immediate. For $\mu,\nu\in \mathcal{P}_1(\K)$ we call $\phi\in \K$ an \emph{equilibrated cutpoint} for $(\mu,\nu)$ if there is a Borel partition $[0,2\pi)=A\dot\cup B$ such that $|A|=|B|$, $F_{\mu^\phi} \leq F_{\nu^\phi}$ on $A$ and $F_{\mu^\phi} \geq F_{\nu^\phi}$ on~$B$. This definition is justified by the following theorem.
\begin{Thm}[\cite{CM95}]
\label{thm3.6}
Let $\mu,\nu\in \mathcal{P}_1(\K)$ be absolutely continuous measures. Then there exists an equilibrated cutpoint $\phi$ for $(\mu,\nu)$ and for every such $\phi$ one has 
\begin{equation}
d_W(\mu,\nu)=d_W(\mu^\phi,\nu^\phi).
\end{equation}
\end{Thm}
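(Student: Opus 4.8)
The plan is to reduce the statement to a one-variable ``conservation of flux'' argument on the circle. Parametrise $\K=\R/2\pi\Z$ and choose continuous nondecreasing functions $\tilde F_\mu,\tilde F_\nu\colon\R\to\R$ with $\tilde F_\bullet(t+2\pi)=\tilde F_\bullet(t)+1$ such that $\tilde F_\bullet(b)-\tilde F_\bullet(a)$ equals the $\bullet$-mass of the counterclockwise arc from $a$ to $b$; these exist because $\mu,\nu$ are absolutely continuous, hence atomless. Set $H:=\tilde F_\mu-\tilde F_\nu$; it is continuous and $2\pi$-periodic, and for every $\phi\in\K$ and $s\in[0,2\pi]$ one has
\[
G^\phi(s):=F_{\mu^\phi}(s)-F_{\nu^\phi}(s)=H(\phi+s)-H(\phi).
\]
In these terms $\phi$ is an equilibrated cutpoint for $(\mu,\nu)$ exactly when $0$ is a Lebesgue median of $G^\phi$ on $[0,2\pi]$, i.e.\ $|\{G^\phi<0\}|\le\pi$ and $|\{G^\phi>0\}|\le\pi$.

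\emph{Existence.} The map $c\mapsto|\{\psi\in\K:H(\psi)\le c\}|$ is nondecreasing from $0$ to $2\pi$, so there is a level $c^\ast\in[\min H,\max H]$ with $|\{H<c^\ast\}|\le\pi\le|\{H\le c^\ast\}|$, and by the intermediate value theorem $c^\ast=H(\phi)$ for some $\phi$. By translation invariance of Lebesgue measure on $\K$ the level sets of $G^\phi$ have the same measure as those of $H-c^\ast$, so $|\{G^\phi<0\}|\le\pi$ and $|\{G^\phi>0\}|=2\pi-|\{H\le c^\ast\}|\le\pi$; distributing the null set $\{G^\phi=0\}$ between the two sides yields the partition $[0,2\pi)=A\dot\cup B$ demanded by the definition, so $\phi$ is equilibrated.

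\emph{The equality.} The bound $d_W(\mu,\nu)\le d_W(\mu^\phi,\nu^\phi)$ holds for all $\phi$ and, as noted in the text, is immediate (the identification $[0,2\pi]\to\K$ is $1$-Lipschitz; push forward a coupling). For the reverse bound, fix an equilibrated $\phi$, an arbitrary $K\in\Pi(\mu,\nu)$ on $\K\times\K$, and a measurable choice $(x,y)\mapsto J_{x,y}$ of a shortest arc from $x$ to $y$ (on the set $d_\K(x,y)=\pi$ take the counterclockwise arc). Define the net counterclockwise flux $\Phi\colon\K\to\R$ by letting $\Phi(\psi)$ be the $K$-mass of pairs whose arc passes through $\psi$ counterclockwise minus the $K$-mass of those passing through it clockwise. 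Two applications of Tonelli's theorem give: (i) integrating ``$K$-mass of arcs through $\psi$'' over $\psi\in\K$ returns $\int_{\K\times\K}d_\K(x,y)\,\tr K$ (because $J_{x,y}$ is a geodesic), and since this integrand pointwise dominates $|\Phi(\psi)|$ we get $\int d_\K\,\tr K\ge\int_\K|\Phi|$; and (ii) $\Phi(\psi_2)-\Phi(\psi_1)$ equals the $\mu$-mass minus the $\nu$-mass of the counterclockwise arc from $\psi_1$ to $\psi_2$, so $\Phi(\phi+s)=k+G^\phi(s)$ for some constant $k\in\R$ depending on $K$. Therefore, using that $0$ is a median of $-G^\phi$ since $\phi$ is equilibrated,
\begin{align*}
\int_{\K\times\K}d_\K(x,y)\,\tr K
&\ \ge\ \int_0^{2\pi}\bigl|k+G^\phi(s)\bigr|\,\tr s
\ \ge\ \min_{t\in\R}\int_0^{2\pi}\bigl|t+G^\phi(s)\bigr|\,\tr s\\
&\ =\ \int_0^{2\pi}\bigl|G^\phi(s)\bigr|\,\tr s
\ =\ d_W(\mu^\phi,\nu^\phi),
\end{align*}
the last equality by Lemma~\ref{lem3.5} (if $F_{\mu^\phi},F_{\nu^\phi}$ fail to be strictly increasing one first convolves with a small bump, which changes nothing in the limit). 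Taking the infimum over $K$ gives $d_W(\mu,\nu)\ge d_W(\mu^\phi,\nu^\phi)$, and with the easy bound this proves the theorem.

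\emph{Main obstacle.} The one genuinely delicate point is step (ii): building $\Phi$ measurably — notably the measurable selection of shortest arcs and the harmless but fiddly antipodal set $\{d_\K(x,y)=\pi\}$ — and deriving the conservation identity for $\Phi(\psi_2)-\Phi(\psi_1)$ rigorously from Tonelli's theorem rather than from the obvious ``divergence theorem'' heuristic. The median argument for existence, the $1$-Lipschitz quotient for the easy inequality, and the elementary $L^1$ fact that a median minimises $t\mapsto\int|t-h|$ are all routine.
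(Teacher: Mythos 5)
The paper never proves this statement at all --- Theorem~\ref{thm3.6} is quoted from~\cite{CM95} with no internal argument --- so there is nothing to match your proof against except the citation; judged on its own, your proof is correct and self-contained. The reduction of the equilibrated-cutpoint condition to ``$0$ is a Lebesgue median of $G^\phi$'', the existence argument via the periodic function $H=\tilde F_\mu-\tilde F_\nu$, a median level $c^\ast$ and the intermediate value theorem, and the flux function $\Phi$ all check out: the conservation identity $\Phi(\psi_2)-\Phi(\psi_1)=\mu(I)-\nu(I)$ holds pointwise for each transported pair by an elementary case analysis (endpoint ambiguities are $K$-negligible because the marginals are atomless, so Tonelli is barely needed there), and for an arbitrary coupling $K$ this yields $\int_{\K\times\K} d_\K\,\tr K\ \ge\ \int_0^{2\pi}|k+G^\phi(s)|\,\tr s\ \ge\ \int_0^{2\pi}|G^\phi(s)|\,\tr s\ =\ d_W(\mu^\phi,\nu^\phi)$, the last step being the median-minimality of $t\mapsto\int|t+G^\phi|$ together with the one-dimensional formula; combined with the easy quotient inequality this gives the claim for \emph{every} equilibrated $\phi$, as required. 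In effect you have re-derived the formula $d_W(\mu,\nu)=\min_{t\in\R}\int_0^{2\pi}|F_{\mu^\phi}(s)-F_{\nu^\phi}(s)-t|\,\tr s$ underlying~\cite{CM95} and identified the optimal shift with the value $H(\phi)$ at an equilibrated cutpoint; what your route buys is independence from the reference, at the cost of the measurability bookkeeping you flag (which is indeed routine: the only ambiguity in the choice of shortest arc sits on the Borel set of antipodal pairs). Three small blemishes, none fatal: the set $\{G^\phi=0\}$ is a zero \emph{level} set, not in general a Lebesgue-null set, though your splitting of it works anyway by non-atomicity of Lebesgue measure; the appeal to Lemma~\ref{lem3.5} needs either your smoothing approximation or the general one-dimensional identity $d_W(\mu,\nu)=\int_I|F_\mu-F_\nu|$, which holds without strict monotonicity; and it would be worth stating explicitly that the constant is $k=\Phi(\phi)$ and that translation invariance of Lebesgue measure converts $\int_\K|\Phi|$ into the integral over $s\in[0,2\pi]$.
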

So calculating the Wasserstein distance between distributions on $\K$ amounts to finding an equilibrated cutpoint and then calculating the integral~\eqref{eq22}.\par 
The construction of the map~$\mu$ in Proposition~\ref{pro3.3} goes as follows. For fixed $p\in H^2\setminus \K$ let \[d_p:\K\rightarrow \R, \psi \mapsto d_{S^2}(p,\psi)
\]
be the distance function to $p$. Let $b_p \in \K$ be such that $d_{S^2}(p,b_p)=d_{S^2}(p,\K)$ and $k_p:=\cos(d_{S^2}(p,b_p))$. Let $h_p:\K \rightarrow \R$ be given by
 \begin{equation}
 h_p(\psi):=\Ha\left(d_p''(\psi)\right)^++\frac{1-k_p}{2\pi}
\end{equation}
and let $\mu_p$ be the measure on $\K$ which is absolutely continuous with density $h_p$. The most technical part in the proof of Proposition~\ref{pro3.3} amounts to the following lemma, compare Sections~$3.2$ and~$3.3$ in~\cite{Cre20}.
\begin{Lem}
\label{lem3.7}
Let $\eta,\phi$ be as in Proposition~\ref{pro3.2}.
\begin{itemize}
\item If $r,s\in (0,\pi)$ then $\phi$ is a balanced cutpoint for $(\mu_{\eta(s)},\mu_{\eta(r)})$.
\item If furthermore $r\leq s\leq \frac{\pi}{2}$ then a corresponding Borel partition is given by $A=[0,\pi)$ and $B=[\pi,2\pi]$.
\end{itemize}
\end{Lem}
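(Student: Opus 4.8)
The plan is to carry everything out by an explicit computation on the round sphere. Realise $\K$ as the equator of $S^2\subset\R^3$ and identify it with $[0,2\pi)$ via $\psi\mapsto(\cos\psi,\sin\psi,0)$, so that $\phi=\eta(0)$ sits at $\psi=0$ (hence $\tau(\phi)$ at $\psi=\pi$); since $\eta$ is not contained in $\K$ it may be written $\eta(t)=(\cos t,\sin t\sin\beta,\sin t\cos\beta)$ for a unique $\beta\in(-\tfrac\pi2,\tfrac\pi2)$, and $\eta(\pi)=\tau(\phi)$. For $p=\eta(s)$ put $P_s(\psi):=\langle\eta(s),(\cos\psi,\sin\psi,0)\rangle=\cos s\cos\psi+\sin s\sin\beta\sin\psi=k_s\cos(\psi-\psi_s)$, where $k_s=k_{\eta(s)}=\sqrt{1-\sin^2 s\cos^2\beta}=\cos d_{S^2}(\eta(s),\K)$ and $\psi_s$ is the angle of the foot point $b_{\eta(s)}$, so that $\cos\psi_s=\cos s/k_s$. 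Differentiating $d_{\eta(s)}=\arccos P_s$ gives
\[
d_{\eta(s)}'(\psi)=\frac{k_s\sin(\psi-\psi_s)}{\sqrt{1-P_s(\psi)^2}},\qquad d_{\eta(s)}''(\psi)=\frac{(1-k_s^2)\,P_s(\psi)}{\bigl(1-P_s(\psi)^2\bigr)^{3/2}},\qquad 1-k_s^2=\sin^2 s\cos^2\beta,
\]
so $d_{\eta(s)}''\geq 0$ exactly on the closed half-circle $J_s$ of length $\pi$ centred at $b_{\eta(s)}$. Using the values $d_{\eta(s)}'(0)=-\sin\beta$, $d_{\eta(s)}'(\pi)=\sin\beta$ and $d_{\eta(s)}'(\psi_s\pm\tfrac\pi2)=\pm k_s$ --- each obtained by direct substitution and, crucially, each independent of $s\in(0,\pi)$ --- integration of $h_{\eta(s)}=\tfrac12(d_{\eta(s)}'')^++\tfrac{1-k_s}{2\pi}$ yields, for $s\in(0,\tfrac\pi2]$ and $\psi\in[0,\pi]$,
\[
F_s(\psi):=F_{\mu_{\eta(s)}^\phi}(\psi)=\tfrac12\,d_{\eta(s)}'\!\bigl(\min\{\psi,\psi_s+\tfrac\pi2\}\bigr)+\tfrac12\sin\beta+\tfrac{1-k_s}{2\pi}\,\psi,
\]
and in particular the mass balance $F_s(\pi)=\mu_{\eta(s)}([0,\pi))=\tfrac12(1+\sin\beta)$ for all $s\in(0,\pi)$ (for $s\geq\tfrac\pi2$ one only has to redo the same integration with $J_s$ in its new position relative to $\{0,\pi\}$).

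The heart of the proof is the monotonicity statement: $\partial_s F_s(\psi)\leq 0$ for every $s\in(0,\tfrac\pi2)$ and $\psi\in(0,\pi)$. On the sub-arc $\psi\geq\psi_s+\tfrac\pi2$ the formula above reduces to $F_s(\psi)=\tfrac12 k_s+\tfrac12\sin\beta+\tfrac{1-k_s}{2\pi}\psi$, whence $\partial_sF_s(\psi)=\tfrac{\pi-\psi}{2\pi}\,\partial_s k_s\leq 0$ since $\partial_s k_s\leq 0$ on $(0,\tfrac\pi2)$. On the sub-arc $\psi\leq\psi_s+\tfrac\pi2$ one computes $\partial_s d_{\eta(s)}'(\psi)=-\dfrac{\sin s\cos^2\beta\,\sin\psi}{(1-P_s(\psi)^2)^{3/2}}$ and $\partial_s k_s=-\dfrac{\sin s\cos s\cos^2\beta}{k_s}$, so after cancelling the common positive factor $\tfrac12\sin s\cos^2\beta$ the inequality $\partial_s F_s(\psi)\leq 0$ becomes
\[
\frac{\sin\psi}{(1-P_s(\psi)^2)^{3/2}}\ \geq\ \frac{\psi}{\pi}\cdot\frac{\cos s}{k_s}\ =\ \frac{\psi}{\pi}\,\cos\psi_s .
\]
Here $(1-P_s^2)^{3/2}\leq 1$ because $|P_s|\leq k_s\leq 1$, so the left side is $\geq\sin\psi$; and since $t\mapsto\tfrac{\sin t}{t}$ is decreasing on $(0,\pi]$ while $0<\psi\leq\psi_s+\tfrac\pi2\leq\pi$ and $\cos\psi_s>0$,
\[
\sin\psi\ \geq\ \psi\cdot\frac{\sin(\psi_s+\pi/2)}{\psi_s+\pi/2}\ =\ \frac{\psi\cos\psi_s}{\psi_s+\pi/2}\ \geq\ \frac{\psi}{\pi}\cos\psi_s,
\]
which is what we needed (the two branches match at $\psi=\psi_s+\tfrac\pi2$, so $s\mapsto F_s(\psi)$ is $C^1$ there and may be integrated). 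Integrating over $s$ gives $F_s\leq F_r$ on $[0,\pi)$ whenever $0<r\leq s\leq\tfrac\pi2$.

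It remains to propagate this. The reflection $\psi\mapsto-\psi$ of $\K$ fixes $\phi$ and $\tau(\phi)$ and sends the grand arc of parameter $\beta$ to the one of parameter $-\beta$, for which the previous paragraph still applies (its argument does not use the sign of $\sin\beta$); since this reflection transforms distribution functions by $F_s(\psi)\mapsto 1-F_s(2\pi-\psi)$, translating back gives $F_s\geq F_r$ on $[\pi,2\pi)$ for $0<r\leq s\leq\tfrac\pi2$. As $[0,\pi)$ and $[\pi,2\pi)$ have equal length, the partition $A=[0,\pi)$, $B=[\pi,2\pi)$ exhibits $\phi$ as a balanced cutpoint of $(\mu_{\eta(s)},\mu_{\eta(r)})$; this is the second bullet. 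For the first bullet let $r,s\in(0,\pi)$ be arbitrary and, the cutpoint property being symmetric in the pair, assume $r\leq s$. If $\tfrac\pi2\leq r\leq s$ apply the reflection $t\mapsto\pi-t$ on $\eta$: it fixes $\K$ setwise, acts there as $\psi\mapsto\pi-\psi$, swaps $\phi\leftrightarrow\tau(\phi)$, sends $\eta(r),\eta(s)$ to $\eta(\pi-r),\eta(\pi-s)$ with $0<\pi-s\leq\pi-r\leq\tfrac\pi2$, and satisfies $F_{\pi-t}(\psi)=F_t(\pi)-F_t(\pi-\psi)$; fed with the mass balance $F_t(\pi)=\tfrac12(1+\sin\beta)$ this turns the already proved case back into $F_s\leq F_r$ on $[0,\pi)$, and symmetrically $F_s\geq F_r$ on $[\pi,2\pi)$. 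The remaining case $r<\tfrac\pi2<s$ follows by chaining the two ranges through $s=\tfrac\pi2$. Thus in all cases $A=[0,\pi)$ (for $r\leq s$) works and $\phi$ is a balanced cutpoint.

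The only step that is not pure bookkeeping is the sign of $\partial_s F_s$ on the sub-arc $\psi\leq\psi_s+\tfrac\pi2$: there the favourable effect of $d_{\eta(s)}'(\psi)$ decreasing in $s$ competes with the unfavourable effect of $k_s$ decreasing, and the inequality only just survives --- it is exactly the support constraint $\psi\leq\psi_s+\tfrac\pi2\leq\pi$ that forces $\tfrac{\sin\psi}{\psi}\geq\tfrac1\pi\cos\psi_s$. Everything else is keeping track of the two branches of $F_s$ and of the location of $J_s=\{d_{\eta(s)}''\geq 0\}$ relative to $0$ and $\pi$ in the cases $s\leq\tfrac\pi2$ and $s\geq\tfrac\pi2$.
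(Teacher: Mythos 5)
Your proof is correct as far as I can check, and it is in fact more than the paper itself offers at this point: the paper does not prove Lemma~\ref{lem3.7} in the text but defers it to Sections~3.2--3.3 of \cite{Cre20}, so your argument is a self-contained substitute rather than a variant of an in-paper proof. Your explicit formulas are consistent with the ones the paper does display in the proof of Lemma~\ref{lem3.4}: with $\cos\xi=-\sin\beta$, $\sin^2\xi=\cos^2\beta$, your expression for $F_s$ on $[0,\pi]$ is exactly the first two branches of the paper's formula for $F(s,\psi)$, and $k_s=\sqrt{1-\sin^2 s\cos^2\beta}$ matches the spherical-sine formula. I verified the two computational pivots: the identity $\partial_s\partial_\psi d=-\sin s\,\cos^2\beta\,\sin\psi\,(1-P_s^2)^{-3/2}$ is correct, and together with $\partial_s k_s=-\sin s\cos s\cos^2\beta/k_s$ the sign of $\partial_sF_s$ on the first branch reduces, as you say, to $\sin\psi/(1-P_s^2)^{3/2}\geq\tfrac{\psi}{\pi}\cos\psi_s$, which your $\sin t/t$ argument delivers precisely because $0<\psi\leq\psi_s+\tfrac\pi2\leq\pi$ there; the two branches match in value and in $s$-derivative at $\psi=\psi_s+\tfrac\pi2$ (where $d''$ vanishes), so integrating in $s$ is legitimate. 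The mass balance $F_s(\pi)=\tfrac12(1+\sin\beta)$ for all $s\in(0,\pi)$, the $\psi\mapsto-\psi$ reflection giving $F_s\geq F_r$ on $[\pi,2\pi)$, the $t\mapsto\pi-t$ reflection handling $\tfrac\pi2\leq r\leq s$, the chaining through $s=\tfrac\pi2$, and the symmetry of the cutpoint notion disposing of $s<r$ are all sound. Two cosmetic points: $d_{\eta(s)}'(\psi_s\pm\tfrac\pi2)=\pm k_s$ is of course not independent of $s$ (only the values at $0$ and $\pi$ are, which is all you use); and on $[\pi,2\pi)$ the reflection formula you quote must be replaced by $F_{\pi-t}(\psi)=F_t(\pi)+1-F_t(3\pi-\psi)$, which still yields the claimed inequality --- worth writing out, since ``symmetrically'' is doing real work in the $\tfrac\pi2\leq r\leq s$ case, as is the terse remark justifying the mass balance for $s>\tfrac\pi2$.
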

\begin{proof}[Proof of Lemma~\ref{lem3.4}]
Set $\mu_s:=\mu_{\eta(s)}$, $d_s:=d_{\eta(s)}$, $k_s:=k_{\eta(s)}$, $h_s:=h_{\eta(s)}$, $b_s:=b_{\eta(s)}$.\par 
Identify $\K$ and $[0,2\pi]$ such that~$\phi$ corresponds to~$0$. We set $\mathcal{D}:=\left(0,\frac{\pi}{2}\right)\times [0,2\pi]$ and define the analytic function $d:\mathcal{D}\rightarrow \R$ by $d(s,\psi):=d_{s}(\psi).$ 
Furthermore we define $F:\mathcal{D} \rightarrow [0,1]$ by
\begin{equation}
F(s,\psi)=F_s(\psi):=\mu_{s}([0,\psi))=\int^\psi_0 h_{s}(\varphi)\tr \varphi.
\end{equation}
Then by Lemma~\ref{lem3.5}, Theorem~\ref{thm3.6} and Lemma~\ref{lem3.7} an optimal transport map $T^{s}$ from $\mu_r$ to $\mu_s$ is given by
\begin{equation}
T^s(\psi)=F_s^{-1}(F_r(\psi)).
\end{equation}
Furthermore if $s\geq t$, then $T^s(\psi)\geq T^t(\psi)$ for $\psi\in [0,\pi]$ and $T^s(\psi)\leq T^t(\psi)$ for $\psi\in [\pi,2\pi]$. We define the map~$T:\mathcal{D}\rightarrow [0,2\pi]$ by~$T(s,\psi):=T^s(\psi)$.\par
Let $\xi$ be the angle between~$\eta$ and~$\K$ in $\phi$. Then by the spherical sine theorem we have that
\begin{equation}
k(s):=k_s=\sqrt{1-\sin^2(s)\sin^2(\xi)}
\end{equation}
is an analytic function with nonzero derivative on~$\left(0,\frac{\pi}{2}\right)$. We have~$b_s\in (-\frac{\pi}{2},\frac{\pi}{2})$ and hence by~\cite[Lemma~$3.1$]{Cre20} and the first variation formula
\begin{equation}
F(s,\psi)=\Ha \cdot \begin{cases}
\frac{\partial d}{\partial \psi} (s,\psi)-\cos(\xi)+\psi \cdot \frac{1-k_s}{\pi}&; \ 0\leq \psi\leq b_s+\frac{\pi}{2}\\
\frac{\partial d}{\partial \psi} \left(s,b_s+\frac{\pi}{2}\right)-\cos(\xi)+\psi \cdot \frac{1-k_s}{\pi}&; \ b_s+\frac{\pi}{2}\leq \psi\leq b_s+\frac{3\pi}{2}\\
\frac{\partial d}{\partial \psi} (s,\psi)+2\cdot\frac{\partial d}{\partial \psi} \left(s,b_s+\frac{\pi}{2}\right)-\cos(\xi)+\psi \cdot \frac{1-k_s}{\pi}&; \ b_s+\frac{3\pi}{2}\leq \psi\leq 2\pi.
\end{cases}
\end{equation}
By \cite[Lemma~$3.1$]{Cre20} 
\begin{equation}
\frac{\partial^2 d}{(\partial \psi)^2} \left(s,b_s+\frac{\pi}{2}\right)=0
\end{equation}
implying that $\frac{\partial F}{\partial s}$ is well defined and continuous on~$\mathcal{D}$. Also
\begin{equation}
\frac{\partial F}{\partial \psi}(s,\psi)=h_s(\psi)
\end{equation}
is continuous on~$\mathcal{D}$ and hence $F\in C^1(\mathcal{D})$. By the first variation formula
\begin{equation}
\frac{\partial d}{\partial \psi} \left(s,b_s+\frac{\pi}{2}\right)=k_s.
\end{equation}
So for fixed parameter~$s$ the function~$\frac{\partial F}{\partial s}(s,\cdot):[0,2\pi]\rightarrow \R$ is piecewise analytic on the intervals~$\left[0,b_s+\frac{\pi}{2}\right],\left[b_s+\frac{\pi}{2},b_s+\frac{3\pi}{2}\right],\left[b_s+\frac{3\pi}{2},2\pi\right]$ and on~$\left[b_s+\frac{\pi}{2},b_s+\frac{3\pi}{2}\right]$ it is given by
\begin{equation}
\frac{\partial F}{\partial s}(s,\psi)=\Ha \cdot \frac{\partial k}{\partial s}(s)\cdot \left(1-\frac{\psi}{\pi}\right)
\end{equation}
which is zero only for~$s=\pi$. In particular $\frac{\partial F}{\partial s}(s,\cdot)$ has only finitely many zeros.\par
For fixed $s$ the map $F_s:[0,2\pi]\rightarrow [0,1]$ defines a $C^1$-diffeomorphism and hence by the implicit function theorem
\begin{equation}
\frac{\partial }{\partial s} \left(F_s\inv(v)\right)=\frac{-1}{\frac{\partial F}{\partial \psi}(s,F_s\inv(v))}\cdot \frac{\partial F}{\partial s} (s,F_s\inv(v)).
\end{equation}
Hence $T$ is differentiable in $s$ and
\begin{equation}
\frac{\partial T}{\partial s}(s,\psi)=-\frac{\frac{\partial F}{\partial s}(s,T^{s}(\psi))}{h_{s}(T^{s}(\psi))}
\end{equation}
is continuous on $\mathcal{D}$. In particular $\frac{\partial T}{\partial s}(s,\psi)=0$ iff $\frac{\partial F}{\partial s}(s,T^{s}(\psi))=0$. Implying that $\frac{\partial T}{\partial s}(s,\cdot)$ has only finitely many zeros. To complete the proof that $T$ is $C^1$ it suffices to note that
\begin{equation}
\frac{\partial T}{\partial \psi}(s,\psi)=\frac{-h_{r}(\psi)}{h_{s}(T^s(\psi))}
\end{equation}
is continuous on $\mathcal{D}$.
\end{proof}
\section{Quadratic isoperimetric spectra}
\label{sec4}
\subsection{Geodesics in finite dimensional normed spaces}
\label{subsec41}
In this subsection fix a finite dimensional normed space~$(X,||.||)$. The following lemma characterizes geodesics in $X$ in terms of the shape of the unit ball.
\begin{Lem}
\label{lem4.1}
Let $\gamma:[a,b]\rightarrow X$ a $1$-Lipschitz curve connecting the points $p$ and $q$. Set $v:=q-p$ and let $\Lambda \in X^*$ be such that $||\Lambda||=1$ and $\Lambda(v)=||v||$. 
Then $\gamma$ is an isometric embedding iff $\Lambda(\gamma'(t))=1$ for almost every $t \in [a,b]$.
\end{Lem}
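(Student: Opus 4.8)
The plan is to pass to the scalar function $\lambda:=\Lambda\circ\gamma\colon[a,b]\to\R$ and argue with the fundamental theorem of calculus. Since $X$ is finite dimensional, the $1$-Lipschitz curve $\gamma$ is absolutely continuous; hence it is differentiable almost everywhere with $\|\gamma'(t)\|\le 1$, and $\gamma(t)-\gamma(a)=\int_a^t\gamma'(\sigma)\,\tr\sigma$ for all $t$. Composing with the bounded linear functional $\Lambda$ shows that $\lambda$ is $1$-Lipschitz with $\lambda'(t)=\Lambda(\gamma'(t))$ for almost every $t$, and $\lambda(b)-\lambda(a)=\Lambda(v)=\|v\|$. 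I would also record the elementary fact that, for a $1$-Lipschitz curve, being an isometric embedding is equivalent to the single equality $\|\gamma(b)-\gamma(a)\|=b-a$: if $a\le s\le t\le b$ then $\|\gamma(b)-\gamma(a)\|\le(s-a)+\|\gamma(t)-\gamma(s)\|+(b-t)\le b-a$, and equality of the two ends forces $\|\gamma(t)-\gamma(s)\|=t-s$. Since $\|\gamma(b)-\gamma(a)\|=\|v\|=\lambda(b)-\lambda(a)$, this condition reads $\lambda(b)-\lambda(a)=b-a$.

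For the forward implication I would note that if $\gamma$ is an isometric embedding then $\int_a^b\lambda'(t)\,\tr t=\lambda(b)-\lambda(a)=b-a$; but $\lambda'(t)=\Lambda(\gamma'(t))\le\|\Lambda\|\,\|\gamma'(t)\|\le 1$ almost everywhere, and a measurable function bounded above by $1$ whose integral over $[a,b]$ equals $b-a$ must equal $1$ almost everywhere, so $\Lambda(\gamma'(t))=1$ a.e. Conversely, if $\Lambda(\gamma'(t))=1$ a.e.\ then $\lambda(b)-\lambda(a)=\int_a^b 1\,\tr t=b-a$, hence $b-a=\Lambda(v)\le\|\Lambda\|\,\|v\|=\|v\|\le b-a$, the last inequality because $\gamma$ is $1$-Lipschitz; thus $\|\gamma(b)-\gamma(a)\|=\|v\|=b-a$ and $\gamma$ is an isometric embedding by the observation above.

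I do not expect a serious obstacle: the only steps that need a word of justification are the absolute continuity of Lipschitz curves into a finite dimensional space (so that both the fundamental theorem of calculus and the identity $\lambda'=\Lambda\circ\gamma'$ hold almost everywhere) and the measure-theoretic remark that a function $\le 1$ of maximal integral is constant. In the degenerate case $p=q$ one has $\|v\|=0<b-a$, so $\gamma$ is not an isometric embedding and, by the second computation above, the condition $\Lambda(\gamma'(t))=1$ a.e.\ also fails, so the equivalence holds trivially. Geometrically, $\Lambda(\gamma'(t))=1$ together with $\|\gamma'(t)\|\le 1$ says that $\gamma'(t)$ lies almost everywhere in the face of the unit ball exposed by $\Lambda$, which is the form in which the lemma will be used in Sections~\ref{subsec41}--\ref{subsec42}.
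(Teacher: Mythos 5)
Your proof is correct and follows essentially the same route as the paper: apply the fundamental theorem of calculus to $\Lambda\circ\gamma$, use $\Lambda(\gamma'(t))\leq 1$ almost everywhere, and observe that a $1$-Lipschitz curve is an isometric embedding iff $||\gamma(b)-\gamma(a)||=b-a$. You merely spell out the details (absolute continuity, the equality case, the degenerate case $p=q$) that the paper leaves implicit.
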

\begin{proof}
Applying the fundamental theorem of calculus to the Lipschitz function $\Lambda \circ \gamma:[a,b]\rightarrow \R$ we get
\begin{equation}
||v||=\Lambda\left(v\right)
=(\Lambda\circ\gamma)(b)-(\Lambda\circ \gamma)(a)
=\int^b_{a}\Lambda( \gamma'(t)) \ \tr t 
\label{e.4.1}
\leq b-a.
\end{equation}
So $\gamma$ is an isometric embedding iff $||v||=b-a$ iff $\Lambda( \gamma'(t))=1$ almost everywhere in~$[a,b]$.
\end{proof}
For the proof of Lemma~\ref{lem1.6} we remind the reader that a metric space valued function $f:\R^m \rightarrow Y$ is called \emph{approximately continuous} at $x \in \R^m$ if for every $\epsilon >0$ 
\[
\lim_{r\downarrow 0}\frac{\Leb^m\left(\left\{y \in B_r(x): d(f(y),f(x))\geq \epsilon \right\}\right)}{\Leb^m\left(B_r(x)\right)}=0.
\]
If $Y$ is a separable metric space then a Borel measurable function $f:\R^m \rightarrow Y$ is approximately continuous almost everywhere, see \cite{LT04}.
\begin{proof}[Proof of Lemma~\ref{lem1.6}]
Let $\phi,\psi \in \K$ be antipodal points such that $||\gamma(\phi)-\gamma(\psi)||=\pi$. Let $v$ and $\Lambda$ be chosen as in Lemma~\ref{lem4.1} for $p:=\gamma(\phi)$ and $q:=\gamma(\psi)$. Then it follows that $\gamma$ is the composition of a shortest path $\gamma_1$ connecting~$p$ to~$q$ and a shortest path $\gamma_2$ connecting~$q$ to~$p$. By Lemma~\ref{lem4.1} we have $\Lambda(\gamma_1'(t))=1$ almost everywhere and $\Lambda(\gamma_2'(t))=-1$ almost everywhere. In particular the measurable function $\gamma':\K \rightarrow X$ cannot be approximately continuous neither at $\phi$ nor at $\psi$. As $X$ is separable this implies the claim.
\end{proof}
\begin{Rem}
\label{rem4.2}
Clearly Lemma~\ref{lem1.6} fails for general Banach spaces. Beyond the Kuratowski embedding of~$\K$ into~$\ell^\infty$ it is also easy to write down an isometric embedding of~$\K$ into $L^1$. Note however that the proof of Lemma~\ref{lem1.6} goes through as soon as $X$ is a Banach space which has the Radon-Nikodym property such as~$\ell^1$, cf.~\cite[Chapter 5]{BL00}.
\end{Rem}
\subsection{Extremal curves}
\label{subsec42}
In this subsection we fix an area functional $\A$ and a finite dimensional normed space~$(X,||.||)$ of dimension at least two.\par
Note that Lemma~\ref{lem1.5} is stated for~$\mathcal{A}^{ht}$. We will however prove it for any area functional. To this end recall that a closed Jordan curve $\gamma$ in~$X$ is said to satisfy a \emph{chord-arc condition} with constant~$\lambda \geq 1$ if for every distinct~$v,w\in \tn{im}(\gamma)$ the shorter of the two arcs of~$\gamma$ between~$v$ and~$w$ has length bounded above by~$\lambda \cdot ||v-w||$. A Jordan curve is bi-Lipschitz to~$\K$ iff it satisfies a chord-arc condition with \emph{some} constant~$\lambda\geq 1$.
\begin{proof}[Proof of Lemma~\ref{lem1.5}]
First we note that $C:=C^\mathcal{A}(X)\in (0,\infty)$, see the proof of Theorem~\ref{thm4.6} below. For a closed and nonconstant Lipschitz curve $\gamma$ in $X$ we define 
\begin{equation}
C(\gamma):=\frac{\tn{Fill}^\mathcal{A}(\gamma)}{\ell(\gamma)^2}.
\end{equation}
By definition there exists a sequence of closed nonconstant Lipschitz curves~$\gamma_n$ such that~$C(\gamma_n)\nearrow C$. By scaling and translation invariance of $C(\gamma)$ we may assume that $\ell(\gamma_n)=1$ and that the image of the $\gamma_n$'s is contained inthe unit ball~$B$. Furthermore by Lemma~\ref{lem2.6} we may assume that the curves~$\gamma_n$ are parametrized by constant speed and hence $1$-Lipschitz. By the Arzela-Ascoli theorem after passing to a subsequence the curves $\gamma_n$ converge uniformly to a closed Lipschitz curve~$\gamma$. Corollary~\ref{lem2.8} implies
\begin{equation}
C(\gamma_n)=\tn{Fill}^\mathcal{A}(\gamma_n)\rightarrow C=\tn{Fill}^\mathcal{A}(\gamma).
\end{equation}
By lower semicontinuouity of length $\ell(\gamma)\leq 1$ and hence $C(\gamma)=C$.\par
Let $\gamma$ be such that $C(\gamma)=C$ and assume $\gamma$ is not Jordan or does not satisfy a chord-arc condition with constant~$\lambda=\sqrt{2}+1$. Then there exists $\phi_1,\phi_2\in \K$ such that the following holds: \emph{If  $l_1\leq l_2$ are the lengths of the two arcs of~$\gamma$ connecting~$\gamma(\phi_1)$ to~$\gamma(\phi_2)$ then $\lambda\cdot d<l_1$ where $d:=||\gamma(\phi_1)-\gamma(\phi_2)||$.}\par 
Then by $l_1\leq l_2$ and the particular choice of $\lambda$ we have 
\begin{equation}
(l_1+d)^2+(l_2+d)^2<l_1^2+l_2^2+\left(\frac{4}{\lambda}+ \frac{2}{\lambda^2}\right)l_1 l_2\leq (l_1+l_2)^2=\ell(\gamma)^2.
\end{equation}
Applying Lemma~\ref{lem2.7} where we take $\gamma_0:=\gamma$ and $\gamma_1$ as the curve identically constant~$\gamma(\phi_1)$ gives
\begin{equation}
\F^\mathcal{A}(\gamma)\leq C\cdot \left((l_1+d)^2+(l_2+d)^2\right)<C\cdot \ell(\gamma)^2.
\end{equation}
This contradicts the extremality of~$\gamma$.
\end{proof}
We call a unit-speed curve~$\gamma:\K\rightarrow X$ an~\emph{$\mathcal{A}$-extremal curve} if $C(\gamma)=C$.
\begin{Ex} \label{rem4.3}
The particular shape of such extremal curves $\gamma$ is only known in the following two situations.
\begin{enumerate}
\item If $X=\R^n$ is Euclidean then up to Euclidean motions the extremal curves are given by the standard embedding of~$\K$ into~$\R^2$. In particular all such curves are planar and~$C(X)=\frac{1}{4\pi}$ independently of~$\mathcal{A}$. This follows from Reshetnyak's majorization theorem, \cite{Res68}, and the existence of a $1$-Lipschitz retraction of $X$ onto any of its linear subspaces.
\item If $X$ is a $2$-dimensional normed space then there is also less ambiguity in the choice of area functional~$\mathcal{A}$. This is because all metric differentials $\tn{md}_p f$ of a Lipschitz map $f:D^2\rightarrow X$ are either degenerate or give rise to normed spaces isometric to~$X$. In particular the shape of extremal curves does not depend on~$\mathcal{A}$ and for area functionals $\mathcal{A}$ and $\bar{\mathcal{A}}$ one has
\begin{equation}
\label{eq23}
C^{\bar{\mathcal{A}}}(X)=\frac{\Ja^{\bar{\mathcal{A}}}(X)}{\Ja^\mathcal{A}(X)}\cdot C^\mathcal{A}(X).
\end{equation}
Maybe somewhat surprisingly the extremal curves~$\gamma$ do not correspond to the boundary contour of the unit ball~$B$ but rather to the boundary contour of the dual unit ball~$B^*$ under a suitable identification of $X$ and $X^*$, see~\cite{Tho96}.
\end{enumerate}
\end{Ex}
Although in these two examples the choice of $\mathcal{A}$ is immaterial for the shape of~$\gamma$ it is very likely that this phenomenon is far from being true for a generic finite dimensional normed space~$X$.
\begin{Rem}
Lemma~\ref{lem1.5} does not hold for general Banach spaces~$X$. Namely by Theorem~\ref{thm1.2} if $C^{\mathcal{A}}(X)=\frac{1}{2\pi}$ and $\gamma$ is an $\mathcal{A}$-extremal curve in~$X$ then~$\gamma$ must be an isometric embedding of~$\K$. However $C^{ht}(\ell^1)=\frac{1}{2\pi}$ by Remark~\ref{rem4.10} below and~$\ell^1$ does not admit such an isometric embedding by Remark~\ref{rem4.2}.
\end{Rem}
\subsection{Quadratic Isoperimetric Spectra}
\label{subsec43}
To prove Theorem~\ref{thm1.4} we fix~$n\geq 2$ and endow $\tn{\tb{Ban}}_n$ with the Banach-Mazur distance $d_{BM}$. It is given for~$X, Y \in \tn{\tb{Ban}}_n$ by
\begin{equation}
d_{BM}(X,Y):=\inf \left\{\log \left(||T||\cdot||T^{-1}||\right)\ |\ T:X\rightarrow Y \tn{ linear isomorphism}\right\}.
\end{equation}
Endowed with the Banach-Mazur distance $\tn{\tb{Ban}}_n$ becomes a compact connected (semi)metric space, see for example~\cite{Tho96}.
\begin{Lem}
$C^\mathcal{A}(\cdot):\tn{\tb{Ban}}_n\rightarrow \R$ is continuous.
\end{Lem}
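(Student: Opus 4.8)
The plan is to compare the constants $C^\A(X)$ and $C^\A(Y)$ by transporting closed curves and their fillings through a linear isomorphism $T\colon X\to Y$ and invoking the monotonicity of area functionals under Lipschitz maps. Fix $X,Y\in\tn{\tb{Ban}}_n$ and such a $T$. If $\gamma\colon\K\to X$ is a closed nonconstant Lipschitz curve, then $T\circ\gamma$ is again one in $Y$, and since $T$ is $\|T\|$-Lipschitz we have $\ell(T\circ\gamma)\le\|T\|\cdot\ell(\gamma)$. Moreover, if $f\colon\Dc\to X$ is any Lipschitz disc with $f|_{\K}=\gamma$, then $T\circ f$ is a Lipschitz disc spanning $T\circ\gamma$ and $\A(T\circ f)\le\|T\|^2\cdot\A(f)$ by the monotonicity property; taking the infimum over $f$ gives $\F^\A(T\circ\gamma)\le\|T\|^2\cdot\F^\A(\gamma)$. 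Running the same two observations with $T^{-1}$ applied to the curve $T\circ\gamma$ produces the reverse bounds $\ell(\gamma)\le\|T^{-1}\|\cdot\ell(T\circ\gamma)$ and $\F^\A(\gamma)\le\|T^{-1}\|^2\cdot\F^\A(T\circ\gamma)$.

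Combining these inequalities yields
\begin{equation}
\frac{\F^\A(T\circ\gamma)}{\ell(T\circ\gamma)^2}\ \ge\ \frac{\|T^{-1}\|^{-2}\,\F^\A(\gamma)}{\|T\|^2\,\ell(\gamma)^2}\ =\ \bigl(\|T\|\cdot\|T^{-1}\|\bigr)^{-2}\cdot\frac{\F^\A(\gamma)}{\ell(\gamma)^2}.
\end{equation}
From the definitions of $\delta^\A$ and $C^\A$ one sees that $C^\A(Z)=\sup_\gamma\F^\A(\gamma)/\ell(\gamma)^2$ for every $Z\in\tn{\tb{Ban}}_n$, where $\gamma$ ranges over all closed nonconstant Lipschitz curves in $Z$ (for a fixed such $\gamma$ the quantity $\F^\A(\gamma)/r^2$ with $r\ge\ell(\gamma)$ is largest at $r=\ell(\gamma)$). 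Since $\gamma\mapsto T\circ\gamma$ is a bijection between the curves in $X$ and those in $Y$, taking the supremum over $\gamma$ in the displayed inequality gives $C^\A(Y)\ge\bigl(\|T\|\cdot\|T^{-1}\|\bigr)^{-2} C^\A(X)$; optimizing over $T$ via the infimum defining $d_{BM}(X,Y)$ then gives $C^\A(Y)\ge e^{-2d_{BM}(X,Y)}\,C^\A(X)$. Interchanging $X$ and $Y$ yields the matching inequality, so
\begin{equation}
e^{-2d_{BM}(X,Y)}\,C^\A(X)\ \le\ C^\A(Y)\ \le\ e^{2d_{BM}(X,Y)}\,C^\A(X).
\end{equation}

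Finally I would record that $C^\A$ is real-valued on $\tn{\tb{Ban}}_n$: as $\tn{\tb{Ban}}_n$ is $d_{BM}$-compact it has finite diameter $D_n$, and $C^\A(\ell^2_n)=\tfrac{1}{4\pi}$ by Example~\ref{rem4.3}, so the displayed estimate forces $C^\A(X)\le e^{2D_n}/(4\pi)<\infty$ for all $X$ (alternatively one may quote Theorem~\ref{thm4.6}). Given finiteness, the multiplicative comparison above gives $|C^\A(X)-C^\A(Y)|\le\bigl(e^{2d_{BM}(X,Y)}-1\bigr)\min\{C^\A(X),C^\A(Y)\}$, which tends to $0$ as $d_{BM}(X,Y)\to0$; hence $C^\A$ is locally Lipschitz, in particular continuous. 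There is no substantial obstacle here: the only points that need care are getting the direction of the filling-area estimate right — the construction only yields competitor fillings of $T\circ\gamma$ of the special form $T\circ f$, hence only the bound $\F^\A(T\circ\gamma)\le\|T\|^2\F^\A(\gamma)$, and one must genuinely pass through $T^{-1}$ for the opposite inequality — and recording the (easy but necessary) finiteness of $C^\A$, without which a purely multiplicative estimate would not by itself produce an $\R$-valued continuous function.
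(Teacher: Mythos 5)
Your proof is correct and follows essentially the same route as the paper: transport curves and fillings through a near-optimal linear isomorphism, use the Lipschitz bounds on length and area to get a multiplicative comparison of the isoperimetric constants, and conclude continuity (the paper phrases this as continuity of $\log C^\mathcal{A}$, with positivity and finiteness of $C^\mathcal{A}$ recorded elsewhere, while you handle finiteness explicitly via compactness of $\tn{\tb{Ban}}_n$ and $C^\mathcal{A}(\R^n)=\tfrac{1}{4\pi}$). One small caveat: your parenthetical alternative of quoting Theorem~\ref{thm4.6} for finiteness would be circular, since that theorem is proved using this very lemma, but your primary argument does not rely on it.
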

\begin{proof}
Let $T:X\rightarrow Y$ be such that $\log(||T||\cdot ||T\inv||)< \epsilon$. Then for every $\gamma:\K\rightarrow X$ and $f:\Dc \rightarrow X$ Lipschitz one has
\begin{equation}
\label{eq24}
\mathrm{e^{-\epsilon}}\cdot\ell(\gamma)\leq \ell(T \circ \gamma)\leq \e^{\epsilon}\cdot\ell ( \gamma)\ \ \& \ \ \e^{-2\epsilon}\cdot \A(f)\leq  \A(T\circ f)\leq \e^{2\epsilon}\cdot \A(f).
\end{equation}
From \eqref{eq24} it follows that
\begin{equation}
\left|\log (C^\mathcal{A}(X))-\log(C^\mathcal{A}(Y))\right|< 8 \epsilon.
\end{equation}
So $\log (C^\mathcal{A}(\cdot))$ is continuous on $\tn{\tb{Ban}}_n$ and hence so is $C^\mathcal{A}(\cdot)$.
\end{proof}
At this point we prove the following variant of Theorem~\ref{thm1.4} which holds without assumptions on the area functional~$\mathcal{A}$.
\begin{Thm}
\label{thm4.6}
$\tn{QIS}^\mathcal{A}(\tn{\tb{Ban}}_n)$ is a compact interval~$[l_n^\mathcal{A},r_n^\mathcal{A}]$ where
\begin{equation}
\frac{1}{16}\leq l_n^\mathcal{A}\leq \frac{1}{4\pi}\leq r_n^\mathcal{A}<\frac{1}{2\pi}
\end{equation}
 and $r_n^\mathcal{A}$ is nondecreasing in~$n$.
\end{Thm}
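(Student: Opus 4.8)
The plan is to deduce the interval structure from the continuity of $C^\A(\cdot)$ together with the compactness and connectedness of $\tn{\tb{Ban}}_n$, to locate the value $\tfrac{1}{4\pi}$ via Euclidean space, and then to prove the two endpoint estimates and the monotonicity in $n$ separately. Since $C^\A(\cdot)$ is a continuous real-valued function on the compact connected space $\tn{\tb{Ban}}_n$, its image $\tn{QIS}^\A(\tn{\tb{Ban}}_n)$ is a compact interval, which we write as $[l_n^\A,r_n^\A]$. As the Euclidean space $\R^n$ lies in $\tn{\tb{Ban}}_n$ and satisfies $C^\A(\R^n)=\tfrac{1}{4\pi}$ by Example~\ref{rem4.3}, it follows at once that $l_n^\A\le\tfrac{1}{4\pi}\le r_n^\A$.

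For the upper estimate I would first verify $r_n^\A\le\tfrac{1}{2\pi}$. If $\gamma$ is a closed Lipschitz curve in some $X\in\tn{\tb{Ban}}_n$, then by Lemma~\ref{lem2.6} we may assume $\gamma$ is parametrized with constant speed, so that $\tfrac{2\pi}{\ell(\gamma)}\gamma$ is $1$-Lipschitz as a map from $(\K,d_\K)$. By Theorem~\ref{thm3.1} this rescaled curve extends to a $1$-Lipschitz map $\tilde m\colon H^2\to X$, and the monotonicity and normalization properties of $\A$ give $\A(\tilde m)\le\tn{Area}(H^2)=2\pi$; scaling back, $\F^\A(\gamma)\le\bigl(\tfrac{\ell(\gamma)}{2\pi}\bigr)^{2}\cdot 2\pi=\tfrac{\ell(\gamma)^{2}}{2\pi}$, hence $C^\A(X)\le\tfrac{1}{2\pi}$ and $r_n^\A\le\tfrac{1}{2\pi}$. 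To exclude equality, suppose $r_n^\A=\tfrac{1}{2\pi}$ and pick $X$ attaining it; Lemma~\ref{lem1.5} provides an extremal curve in $X$, which after rescaling and reparametrizing by constant speed becomes a $1$-Lipschitz map $\gamma\colon(\K,d_\K)\to X$ with $\F^\A(\gamma)=\tfrac{1}{2\pi}(2\pi)^{2}=2\pi$. As $\F^\A(\gamma)\le 2\pi$ always holds, this is the borderline case of Theorem~\ref{thm1.2}; in contrapositive form the theorem forces $\gamma$ to be an isometric embedding of $(\K,d_\K)$ into $X$, contradicting Lemma~\ref{lem1.6}. Hence $r_n^\A<\tfrac{1}{2\pi}$. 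Monotonicity is then soft: $X\oplus_\infty\R\in\tn{\tb{Ban}}_{n+1}$ contains $X$ isometrically and carries a norm-one linear projection onto $X$, so postcomposing a filling of a curve $\gamma$ in $X$ with that projection shows $\F^\A_{X\oplus_\infty\R}(\gamma)\ge\F^\A_X(\gamma)$; thus $C^\A(X\oplus_\infty\R)\ge C^\A(X)$, and taking the supremum over $X\in\tn{\tb{Ban}}_n$ gives $r_{n+1}^\A\ge r_n^\A$.

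The remaining and most delicate point is the lower bound $l_n^\A\ge\tfrac{1}{16}$. Since $\Ja^{cr}$ is the minimal Jacobian we have $\A\ge\A^{cr}$, hence $C^\A(X)\ge C^{cr}(X)$, and it suffices to show $C^{cr}(X)\ge\tfrac{1}{16}$ for every $X\in\tn{\tb{Ban}}_n$. The idea is to test against a curve $\gamma$ supported in a two-dimensional subspace $Y\subseteq X$, taken to be a scaled affine copy of the boundary contour of the polar body $B_Y^{*}$ --- the planar extremal curve identified in the second part of Example~\ref{rem4.3} --- whose length is controlled through the John ellipse $E\subseteq B_Y\subseteq\sqrt2\,E$ and the classical perimeter bounds for unit balls of two-dimensional normed spaces. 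The key difficulty is that a filling of $\gamma$ may range over all of $X$, not just over $Y$; to bound $\F^{cr}_X(\gamma)$ from below I would use norm-one linear maps $(\Lambda_1,\Lambda_2)\colon X\to\R^2_\infty$ whose components are support functionals of $B_X$ at John-contact points of $B_Y$ (see Theorem~\ref{thm2.1}), postcompose with the $\sqrt2$-Lipschitz identity $\R^2_\infty\to(\R^2,|\cdot|)$, and use that every area functional on maps into the Euclidean plane equals $\A^b$ and therefore dominates the algebraic area enclosed by the image curve. Bookkeeping the losses with the Jacobian comparison $\Ja^{ir}\le 2\Ja^{cr}$ of~\eqref{eq13} (equality precisely for parallelograms) should then yield the constant $\tfrac{1}{16}$, which is exactly the value of $C^{cr}$ at $\R^2_\infty$ and which I do not expect to be optimal for $n\ge 3$. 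All steps except this last one are routine once Theorem~\ref{thm1.2} and Lemmas~\ref{lem1.5},~\ref{lem1.6} are available; I expect the main obstacle to be precisely this universal lower bound, namely transferring a two-dimensional filling estimate to the ambient higher-dimensional space without losing more than the margin that keeps us above $\tfrac{1}{16}$.
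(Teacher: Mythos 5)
Most of your argument coincides with the paper's: the interval structure via continuity of $C^\A(\cdot)$ on the compact connected space $(\tn{\tb{Ban}}_n,d_{BM})$, the location $l_n^\A\le\tfrac{1}{4\pi}\le r_n^\A$ via Example~\ref{rem4.3}, the strict bound $r_n^\A<\tfrac{1}{2\pi}$ by combining Lemma~\ref{lem1.5}, Lemma~\ref{lem1.6} and Theorem~\ref{thm1.2} (the paper argues directly on the extremal space rather than by contradiction, but this is the same argument), and the monotonicity of $r_n^\A$ via the $1$-Lipschitz retraction of $X\oplus_\infty\R$ onto $X$. These parts are correct.

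The genuine gap is the lower bound $l_n^\A\ge\tfrac{1}{16}$, exactly the step you flag as uncertain. The paper's proof does not attack this directly with projections; it first establishes $l_n^{ht}=\tfrac{1}{4\pi}$, i.e.\ $C^{ht}(X)\ge\tfrac{1}{4\pi}$ for \emph{every} $X\in\tn{\tb{Ban}}_n$, by invoking the quasiconvexity (semi-ellipticity) of the Holmes--Thompson area from \cite{BI02}: flat planar discs are $\A^{ht}$-minimizing fillings of their boundaries in any normed space, so the intrinsic two-dimensional constant $\tfrac{1}{4\pi}$ from \cite{HT79} survives passage to the ambient space with no loss. Then \eqref{eq13} together with the minimality of $\Ja^{cr}$ gives $\Ja\ge\Ja^{cr}\ge\tfrac{\pi}{4}\Ja^{ht}$, hence $C^\A(X)\ge\tfrac{\pi}{4}\cdot\tfrac{1}{4\pi}=\tfrac{1}{16}$. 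Your proposed substitute cannot deliver this constant. If you map $X$ to $\R^2_\infty$ by two norm-one functionals and then to the Euclidean plane by the $\sqrt2$-Lipschitz identity, you lose a factor $2$ in area, and the curve $c=(\Lambda_1\circ\gamma,\Lambda_2\circ\gamma)$ satisfies $\ell_\infty(c)\le\ell_X(\gamma)$; since the sharp isoperimetric ratio of Euclidean area to squared $\ell_\infty$-length in the plane is $\tfrac18$ (attained only by dilates of the $\ell_1$-diamond), your method can never give more than $\tfrac12\cdot\tfrac18=\tfrac{1}{16}$, and the equality analysis forces the relevant two-dimensional section of $B_X$ to be a parallelogram: one needs unit vectors $p,q$ with $\Lambda_1(p)=\Lambda_1(q)=1$ and $\Lambda_2(p)=-\Lambda_2(q)=1$, which forces a segment of length $\|p-q\|\ge 2$ in the unit sphere. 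For any other section the yield drops strictly below $\tfrac{1}{16}$; already for $X$ Euclidean the scheme caps at $\tfrac{1}{8\pi}<\tfrac{1}{16}$, since the projected curve encloses at most $\tfrac{1}{4\pi}$ times the square of its Euclidean length and the projection has determinant at most one. Playing with the John ellipse and Theorem~\ref{thm2.1} only controls the angle between contact points (at best $\sin\theta\ge\tfrac{\sqrt3}{2}$) and does not recover the lost factor. So the universal bound $\tfrac{1}{16}$ really does require an input of the strength of the Burago--Ivanov semi-ellipticity result; without it your argument establishes only a strictly smaller lower bound for $l_n^\A$.
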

\begin{proof}
$\tn{QIS}^{\mathcal{A}}(\tn{\tb{Ban}}_n)$ is the image of the compact connected space $\tn{\tb{Ban}}_n$ under the continuous map $C^\mathcal{A}(\cdot)$ and hence a compact interval $[l^\mathcal{A}_n,r^\mathcal{A}_n]$. By Example~\ref{rem4.3} we have $l_n^\mathcal{A}\leq \frac{1}{4\pi}\leq r_n^\mathcal{A}$ and by~\cite{HT79}
\begin{equation}
\label{eq25}
l_2^{ht}=r_2^{ht}=\frac{1}{4\pi}.
\end{equation}
Furthermore by quasiconvexity of~$\mathcal{A}$, see~\cite{BI02}, it follows that $l_n^{ht}=\frac{1}{4\pi}$. Hence \eqref{eq13} and the minimality of~$\mathcal{A}^{cr}$ imply $l_n^\mathcal{A}\geq \frac{1}{16}$.\par 
Fix $X\in \tn{\tb{Ban}}_n$ such that $C^\mathcal{A}(X)=r_n^\mathcal{A}$ and an $\mathcal{A}$-extremal curve~$\gamma$ within~$X$. By Lemma~\ref{lem1.6} $\gamma$ cannot be an isometric embedding and hence Theorem~\ref{thm1.2} implies
\begin{equation}
r_n^\mathcal{A}=C^\mathcal{A}(\gamma)<\frac{1}{2\pi}.
\end{equation}
To see that $r_n^\mathcal{A}$ is nondecreasing it suffices to note that $C^\mathcal{A}(X\times \R)\geq C^\mathcal{A}(X)$. This is true because~$X$ is a $1$-Lipschitz retrect of~$X\times \R$.
\end{proof}
We call $X\in \tn{\tb{Ban}}_n$ an \emph{$\mathcal{A}$-extremal space} if $C^\mathcal{A}(X)=r^\mathcal{A}_n$.
\begin{Ex}
\label{ex4.7}
By~\eqref{eq25} any~$X\in \tn{\tb{Ban}}_2$ is $\mathcal{A}^{ht}$~extremal. By comparison to $\mathcal{A}^{ht}$ and equations~\eqref{eq23},~\eqref{eq13} and~\eqref{eq14} the $\mathcal{A}$-quadratic isoperimetric spectra of~$\tn{\tb{Ban}}_2$ for~$\mathcal{A}=\mathcal{A}^{ht},\mathcal{A}^{b},\mathcal{A}^{m*},\mathcal{A}^{ir}$ are given as stated in Section~\ref{subsec12}. The up to isometry unique extremal space in all these situations is~$\R^2_\infty$. By~\eqref{eq13} we can also add
\begin{equation}
\tn{QIS}^{cr}(\tn{\tb{Ban}}_2)=\left[\frac{1}{16},\frac{1}{4\pi}\right]
\end{equation}
to the list where by~\eqref{eq12} the unique extremal space is the Euclidean plane.
\end{Ex}
For $n\geq 3$ the question which spaces are extremal remains completely open.
\subsection{Lower bounds}
\label{subsec44}
To complete the proof of Theorem~\ref{thm1.4} by Theorem~\ref{thm4.6} it suffices to show that~$r_n^{ht}$ converges to~$\frac{1}{2\pi}$ as~$n\rightarrow \infty$. More precisely we will prove~\eqref{eq8}.
Remember that every separable metric space $X$ admits an isometric embedding~$\iota$ into the space~$l^\infty$ of bounded sequences endowed with the supremum norm. If~$X$ is compact this Kuratowski embedding~$\iota:X\rightarrow \ell^\infty $ is given by choosing a countable dense subset~$\{x_1,x_2,x_3,...\}$ of $X$ and setting
\begin{equation}
\iota(x):=(d(x,x_1),d(x,x_2),d(x,x_3),....).
\end{equation}
Let $S_n:=\{\phi_1,...,\phi_n\}\subset \K$ be a cyclically ordered subset of equidistant points. The Kuratowski embedding gives an isometric embedding of~$S_n$ into~$\R^n_\infty$. Hence the proof of~\eqref{eq8} and in particular Theorem~\ref{thm1.4} is completed by the following Lemma.
\begin{Lem}
\label{lem4.8}
Let $X$ be a geodesic metric space, $m\geq 2$ and $e:S_m\rightarrow X$ an isometric embedding. Then
\begin{equation}
C^{ht}(X)\geq \left(1-\frac{4}{m}\right)\cdot \frac{1}{2\pi}.
\end{equation}
\end{Lem}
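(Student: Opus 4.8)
The idea is to use the isometric copy of the $m$ equidistant points $S_m = \{\phi_1,\dots,\phi_m\}$ inside $X$ to build, via a homotopy argument, a curve in $X$ whose filling area is forced to be large by Ivanov's generalized Pu inequality~\eqref{eq7}. First I would recall that the $m$ points of $S_m \subset \K$ with the angular metric $d_\K$ embed isometrically into $\R^m_\infty$ (Kuratowski), and the hypothesis gives us an isometric embedding $e \colon S_m \to X$. Since $X$ is geodesic, connect consecutive points $e(\phi_k)$ and $e(\phi_{k+1})$ by a geodesic segment in $X$; this produces a closed curve $\gamma$ in $X$, built of $m$ geodesic arcs, which passes through the points $e(\phi_k)$ at the "correct" $d_\K$-distances from each other. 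The length of each arc is $d_\K(\phi_k,\phi_{k+1}) = \tfrac{2\pi}{m}$, so $\ell(\gamma) = 2\pi$.

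\smallskip

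\textbf{Comparing $\gamma$ to an isometric embedding of $(\K,d_\K)$.} The key step is to relate $\F^{ht}(\gamma)$ to $\F^{ht}$ of an honest isometric embedding of the whole circle, for which~\eqref{eq7} gives the lower bound $2\pi$. Consider the piecewise-geodesic curve $\gamma \colon \K \to X$ just constructed and compare it, using Lemma~\ref{lem2.7}, with a reference curve: embed $(\K,d_\K)$ isometrically into $\ell^\infty$ (possible since that is exactly $H^2$ restricted to its boundary, or directly by Kuratowski), and more importantly, embed $S_m$ isometrically into $\R^m_\infty \subset \ell^\infty$ and fill it in there by a piecewise-geodesic curve $\gamma_\infty$ of the same combinatorial type. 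The point is that $\gamma_\infty$ and $\gamma$ agree on the finite set $S_m$ after identifying the images (both are isometric copies of $S_m$), so the distances $d_k$ in Lemma~\ref{lem2.7} vanish at the nodes and the homotopy cost is controlled by the arc-lengths $l_k^0 = l_k^1 = \tfrac{2\pi}{m}$ alone. Hence
\begin{equation}
\bigl|\F^{ht}(\gamma) - \F^{ht}(\gamma_\infty)\bigr| \leq C^{ht}(X')\cdot \sum_{k=1}^m \Bigl(\tfrac{2\pi}{m} + \tfrac{2\pi}{m}\Bigr)^2 = C \cdot m \cdot \tfrac{16\pi^2}{m^2} = \tfrac{16\pi^2 C}{m},
\end{equation}
for the appropriate ambient constant; one has to be slightly careful about which space the homotopy lives in, but working inside $\ell^\infty$ (into which both $X$ and the reference all embed $1$-Lipschitzly, or using that $\R^m_\infty$ retracts) lets one use $C^{ht}(\ell^\infty) = \tfrac{1}{2\pi}$. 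Then I would estimate $\F^{ht}(\gamma_\infty)$ from below: since the nodes of $\gamma_\infty$ form an isometric copy of $S_m$, one further homotopy (again through $\R^m_\infty$, cost $\lesssim \tfrac{1}{m}$) pushes $\gamma_\infty$ to an isometric embedding of all of $(\K,d_\K)$, whose filling area is $\geq 2\pi$ by~\eqref{eq7}. Collecting the two homotopy costs yields $\F^{ht}(\gamma) \geq 2\pi - \tfrac{O(1)}{m}$, and dividing by $\ell(\gamma)^2 = 4\pi^2$ gives $C^{ht}(X) \geq \tfrac{1}{2\pi} - \tfrac{O(1)}{m\cdot 2\pi}$; pinning the constant to the claimed $(1-\tfrac{4}{m})\cdot\tfrac{1}{2\pi}$ just amounts to bookkeeping the factor in Lemma~\ref{lem2.7} carefully (the $16\pi^2$ should come out to $8\pi^2$ once one uses $l_k^i + d_k + d_{k+1} = \tfrac{2\pi}{m}$ with the $d_k$ vanishing, giving $m\cdot(\tfrac{2\pi}{m})^2 = \tfrac{4\pi^2}{m}$ per homotopy, hence total deficit $\tfrac{8\pi^2}{m}$, i.e.\ $\tfrac{2}{m}\cdot 2\pi$ — so one may need two homotopies or a sharper splitting to reach $\tfrac{4}{m}$).

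\smallskip

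\textbf{Main obstacle.} The delicate point is making the homotopy argument rigorous despite $\gamma$ itself not being (and not being close to) an isometric embedding of the full circle — Lemma~\ref{lem1.6} tells us no such embedding exists in a finite-dimensional $X$, so~\eqref{eq7} cannot be applied in $X$ directly. The fix is to do all the comparisons in $\ell^\infty$ (where isometric circles do exist) while only paying the isoperimetric constant of $X$ for the one homotopy that actually takes place in $X$; one must check that the filling area is monotone under $1$-Lipschitz maps (Monotonicity of $\A$) so that $\F^{ht}$ computed in $\ell^\infty$ bounds what is needed, and that the two isometric copies of $S_m$ — the given one $e(S_m)\subset X$ and the Kuratowski one in $\R^m_\infty$ — can be linked by a distance-preserving identification on the nodes so that the $d_k$ terms genuinely vanish. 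Getting the numerical constant exactly equal to $4/m$ rather than some other $O(1/m)$ will require choosing the intermediate curves and the number of homotopy steps with care.
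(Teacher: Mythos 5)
Your outline follows the paper's strategy: build the closed curve $\gamma$ by joining consecutive points $e(\phi_i)$ by geodesics (so $\ell(\gamma)=2\pi$), transfer the problem into $\ell^\infty$, compare there with the Kuratowski embedding $\iota$ of the whole circle via Lemma~\ref{lem2.7} (using that $C^{ht}(\ell^\infty)\leq\frac{1}{2\pi}$, which follows from Theorem~\ref{thm3.1}), and invoke~\eqref{eq7} to get $\F^{ht}(\iota)\geq 2\pi$. But the step you defer as something "one must check" is precisely the missing idea rather than a formality: you need a single $1$-Lipschitz map $f\colon X\to\ell^\infty$ with $f(e(\phi_i))=\iota(\phi_i)$ for all $i$, so that $f\circ\gamma$ is a curve in $\ell^\infty$ whose nodes coincide exactly with those of $\iota$ (making the $d_k$ in Lemma~\ref{lem2.7} vanish) and so that $\F^{ht}(\gamma)\geq\F^{ht}(f\circ\gamma)$ by monotonicity of the area functional. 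Knowing that $X$ and the reference curve each embed $1$-Lipschitzly (even isometrically) into $\ell^\infty$ does not give this: an arbitrary embedding of $X$ will not place $e(\phi_i)$ on $\iota(\phi_i)$, and a "distance-preserving identification on the nodes" a priori lives only on the finite set $S_m$, so it does not let you move the whole curve $\gamma$ into $\ell^\infty$ with matched nodes. The ingredient that closes this gap is the injectivity (hyperconvexity) of $\ell^\infty$: the partial isometry $e(\phi_i)\mapsto\iota(\phi_i)$ between the two copies of $S_m$ extends to a $1$-Lipschitz map defined on all of $X$ (coordinatewise McShane extension, cf.~\cite{Lan13}), and this $f$ is what the paper uses.

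Once $f$ is available, your intermediate curve $\gamma_\infty$ and the homotopy "taking place in $X$" should both be dropped: no homotopy in $X$ is needed, and paying $C^{ht}(X)$ — the very quantity you are trying to bound from below — would entangle the estimate and weaken the constant. A single application of Lemma~\ref{lem2.7} in $\ell^\infty$, comparing $f\circ\gamma$ with $\iota$ (both have arcs of length at most $\frac{2\pi}{m}$ between matched nodes, $d_k=0$), costs at most $\frac{1}{2\pi}\cdot m\cdot\left(\frac{2\pi}{m}+\frac{2\pi}{m}\right)^2=\frac{8\pi}{m}$, i.e.\ exactly the deficit $\frac{4}{m}\cdot 2\pi$ in the statement. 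Your two-stage scheme $\gamma\to\gamma_\infty\to\iota$ spends this twice and only yields $\left(1-\frac{8}{m}\right)\cdot\frac{1}{2\pi}$, and the parenthetical recomputation in which the per-square term becomes $\left(\frac{2\pi}{m}\right)^2$ is incorrect, since Lemma~\ref{lem2.7} charges $(l_k^0+l_k^1+d_k+d_{k+1})^2$, i.e.\ both arc lengths enter. So the route is the right one, but the hyperconvexity extension and the single direct comparison are what make it work and what deliver the stated constant $\left(1-\frac{4}{m}\right)\cdot\frac{1}{2\pi}$.
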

\begin{proof}
We may extend $e$ to a $1$-Lipschitz curve $\gamma:\K\rightarrow X$ by defining $\gamma$ to equal a geodesic connecting $e(\phi_i)$ to $e(\phi_{i+1})$ on $[\phi_i,\phi_{i+1}]$. Let $\iota:\K\rightarrow l^\infty$ be the Kuratowski embedding. As $l^\infty$ is an injective metric space there is a $1$-Lipschitz map $f:X \rightarrow l^\infty$ such that 
\begin{equation}
f(e(x_i))=\iota(x_i)
\end{equation}
for all $i=1,...,m$,~see for example~\cite{Lan13}. Then Lemma~\ref{lem2.7} implies
\begin{equation}
\tn{Fill}^{ht}(\gamma)\geq \tn{Fill}^{ht}(f\circ \gamma)\geq \tn{Fill}^{ht}(\iota)-\frac{1}{2\pi}\cdot m \cdot \left(2\cdot \frac{2\pi}{m}\right)^2\geq \left(1-\frac{4}{m}\right)\cdot 2\pi
\end{equation}
As $\ell(\gamma)=2\pi$ this implies the claim.
\end{proof}
\begin{Rem}
There are two observations that allow to push the lower bound on the constants $r_n^\mathcal{A}$ a bit further if one desires.
\begin{enumerate}
\item The Kuratowski embedding of $S_{2n}$ into $\R^{2n}_\infty$ carries more information than necessary. In fact one can forget about half of the coordinates and even obtain an isometric embedding of~$S_{2n}$ into~$\R^{n}_\infty$. This leads to 
\begin{equation}
\label{eq26}
r_n^{ht}\geq C^{ht}(\R^n_\infty)\geq \left(1-\frac{2}{n}\right)\cdot \frac{1}{2\pi}.
\end{equation}
\item If $X$ is a polyhedral normed space such as~$\R^n_\infty$ then by~\eqref{eq12} and \eqref{eq14} one has $C^\mathcal{A}(X)> C^{ht}(X)$ for $\mathcal{A}=\mathcal{A}^b,\mathcal{A}^{ir},\mathcal{A}^{m*}$. In particular for all these area functionals the inequality~\eqref{eq26} is even strict. Similarly one can obtain explicit upper bounds on the constants $C^\mathcal{A}(X)$ for $\mathcal{A}=\mathcal{A}^{b},\mathcal{A}^{ht},\mathcal{A}^{cr}$ and fixed polyhedral finite dimensional normed space $X$ by comparing to $\mathcal{A}^{ir}$ instead of $\mathcal{A}^{ht}$.
\end{enumerate}
\end{Rem}
\begin{Rem}
\label{rem4.10}
There is also an isometric embedding~$j$ of~$S_{n}$ into~$\R^n_1=L^1(S_n)$ which is given by \begin{equation}
(j(\phi))(\psi)=\begin{cases}
\frac{\pi}{n} &, \ (\phi,\psi,\tau(\phi)) \tn{ is cyclically ordered \& } \psi \neq\tau(\phi).\\
0&, \ \tn{else.}
\end{cases}
\end{equation}
In particular $C^{ht}(\R^n_1)\geq \left(1-\frac{4}{n}\right)\cdot \frac{1}{2\pi}$ and~$C^{ht}(\ell^1)=\frac{1}{2\pi}$.
\end{Rem}
We finish our paper with the proof Theorem~\ref{thm1.3}. More generally we show that 
\begin{equation}
\label{eqbanana2}
\tn{QIS}^\mathcal{A}(\tn{\tb{Ban}})=\Big\{0\Big\}\cup \Big[\frac{1}{4\pi},\frac{1}{2\pi}\Big]
\end{equation}
as soon as~$\mathcal{A}\geq \mathcal{A}^{ht}$.
\begin{proof}[Proof of~\eqref{eqbanana2}]
By~\eqref{eq25},~\cite{BI02} and~\cite{Cre20} one has
\[
\frac{1}{4\pi}\leq C^\mathcal{A}(X)\leq \frac{1}{2\pi}
\] for every nontrivial Banach space~$X$ and by Theorem~\ref{thm1.4} the interval $[\frac{1}{4\pi},\frac{1}{2\pi})$ is contained in $\tn{QIS}^\mathcal{A}(\tn{\tb{Ban}})$. Thus the proof is completed by Remark~\ref{rem4.10} and noting that $C^\mathcal{A}(\R)=0$.
\end{proof}
\bibliographystyle{alpha}
\bibliography{QIIinFinDim}
\end{document}